\newtheorem{theorem}{Theorem}[section]
\newtheorem{lemma}[theorem]{Lemma}
\newtheorem{proposition}[theorem]{Proposition}
\newtheorem{claim}[theorem]{Claim}
\newtheorem{fact}[theorem]{Fact}
\newtheorem{problem}[theorem]{Problem}
\def\OPT{\mathrm{OPT}}
\def\supp{\mathrm{supp}}
\def\sc{\mathrm{sc}}
\newcommand{\multiset}[1]{\{\hspace{-0.25em}\{\hspace{0.1em}#1\hspace{0.1em}\}\hspace{-0.25em}\}}
\theoremstyle{definition}
\newtheorem{definition}[theorem]{Definition}
\begin{document}
\title{The inducibility of Tur\'an graphs}
\date{\today}
%%%%%%%%%%%%%%%%%%%%%%%%%%%%%%%%%%%%%%%%%%%%%%%%%%%%
\author[1]{Xizhi Liu\thanks{Email: \texttt{liuxizhi@ustc.edu.cn}}}
\author[1,2]{Jie Ma\thanks{Email: \texttt{jiema@ustc.edu.cn}}}
\author[1]{Tianming Zhu\thanks{Email: \texttt{zhutianming@mail.ustc.edu.cn}}}
%%%%%%%%%%%%%%%%%%%%%%%%%%%%%%%%%%%%%%%%%%%%%%%%%%%%%
\affil[1]{{\small School of Mathematical Sciences, University of Science and Technology of China, Hefei, China}}
\affil[2]{{\small Yau Mathematical Sciences Center, Tsinghua University, Beijing, China}}
%%%%%%%%%%%%%%%%%%%%%%%%%%%%%%%%%%%%%%%%%%%%%%%%%%%%%

\maketitle

%%%%%%%%%%%%%%%%%%%%%%%%%%%%%%%%%%%%%%%%%%%%%%%%%%%%%
\begin{abstract}
    Let $I(F,n)$ denote the maximum number of induced copies of a graph $F$ in an $n$-vertex graph.
    The inducibility of $F$, defined as $i(F)=\lim_{n\to \infty} {I(F,n)}/{\tbinom{n}{v(F)}}$, is a central problem in extremal graph theory.
    In this work, we investigate the inducibility of Tur\'an graphs $F$. 
    This topic has been extensively studied in the literature, including works of Pippenger–Golumbic~\cite{Pippenger75}, Brown–Sidorenko~\cite{Brown94}, Bollob\'as--Egawa--Harris--Jin~\cite{Bollobas95}, Mubayi, Reiher, and the first author~\cite{LiuMubayi23}, and Yuster~\cite{Yuster26}. 
    Broadly speaking, these results resolve or asymptotically resolve the problem when the part sizes of $F$ are either sufficiently large or sufficiently small (at most four).
    
    We complete this picture by proving that for every Tur\'{a}n graph $F$ and sufficiently large $n$, the value $I(F,n)$ is attained uniquely by the $m$-partite Tur\'{a}n graph on $n$ vertices, where $m$ is given explicitly in terms of the number of parts and vertices of $F$.
    This confirms a conjecture of Bollob{\'a}s--Egawa--Harris--Jin~\cite{Bollobas95} from 1995, and we also establish the corresponding stability theorem.
    Moreover, we prove an asymptotic analogue for $I_{k+1}(F,n)$, the maximum number of induced copies of $F$ in an $n$-vertex $K_{k+1}$-free graph, thereby completely resolving a recent problem of Yuster~\cite{Yuster26}.   
    Finally, our results extend to a broader class of complete multipartite graphs in which the largest and smallest part sizes differ by at most on the order of the square root of the smallest part size.
\end{abstract}

%%%%%%%%%%%%%%%%%%%%%%%%%%%%%%%%%%%%%%%%%%%%%%%%%%%%%
\section{Introduction}\label{Sec:Introduction}
A fundamental problem in extremal graph theory is to determine the maximum number of induced copies of a given graph $F$ among all $n$-vertex graphs.
Formally, given two graphs $F$ and $G$, let $I(F,G)$ denote the number of induced copies of $F$ in $G$, that is, the number of subsets $S \subseteq V(G)$ of size $v(F)$ such that the induced subgraph $G[S]$ is isomorphic to $F$. Here, $v(F)$ denotes the number of vertices of $F$.
For every positive integer $n$, let 
% Let $I(F,n)$ denote the maximum number of induced copies of $F$ in an $n$-vertex graph, that is, 
\begin{align*}
    I(F, n) 
    \coloneqq \max\left\{ I(F, G) \colon v(G) = n \right\}. 
\end{align*}
The \emph{inducibility} of $F$ is then defined as
\begin{align*}
    i(F) 
    \coloneqq \lim_{n\to \infty} {I(F,n)}/{\tbinom{n}{v(F)}}.
\end{align*}
A systematic study of the inducibility problem for graphs was initiated in a foundational work of Pippenger--Golumbic~\cite{Pippenger75}, in which they established several general properties of inducibility and determined the inducibility of complete bipartite graphs with part sizes differing by at most one.

Determining $I(F,n)$ (or even $i(F)$) is rather difficult in general. For example, the inducibility of the path on four vertices remains wide open (see, e.g.,~\cite{Exo86,Even15note}).
For small graphs, bounds on the inducibility of graphs on $4$, $5$, and $6$ vertices have been obtained in several works, including Exoo~\cite{Exo86}, Hirst~\cite{Hir14}, Even-Zohar--Linial~\cite{Even15note}, Pikhurko--Slia{\v{c}}an--Tyros~\cite{Sliacan19}, and Bodn\'{a}r et al.~\cite{BGLLPS26}, some of which employ the computer-assisted flag algebra machinery of Razborov~\cite{Raz07}. 
A particularly interesting result obtained by Balogh--Hu--Lidick{\`y}--Pfender~\cite{Balogh16} is the determination of the inducibility of the $5$-cycle $C_{5}$, which confirms a special case of an old conjecture of Pippenger--Golumbic~\cite{Pippenger75} on the inducibility of cycles. 
This conjecture remains open for longer cycles, and improved upper bounds were obtained in~\cite{HT18,KNV19}.

For general graphs, results of Yuster~\cite{Yuster19} and Fox--Huang--Lee~\cite{FHL17} imply that for almost all graphs $F$,
\begin{align*}
    i(F) = \frac{v(F)!}{v(F)^{v(F)} - v(F)},
\end{align*}
where the lower bound construction arises from nested blowups of $F$ itself.
Answering a question of Bollob{\'a}s--Egawa--Harris--Jin~\cite{Bollobas95} asymptotically, Hatami--Hirst--Norine~\cite{Hatami14} established that if $F$ is a sufficiently large balanced blowup of some graph $K$, then the extremal graph for $I(F,n)$ is essentially a blowup of $K$.
Extending the results~\cite{MMNT19,KST19,FS20} on the edge-statistic conjecture of Alon--Hefetz--Krivelevich--Tyomkyn~\cite{AHKT20}, Ueltzen~\cite{Uel24} recently classified all graphs with high inducibility.

In this work, we focus on the case where $F$ is a complete multipartite graph, and more specifically, an almost balanced complete multipartite graph. 
This class of graphs has already been studied extensively since the work of Pippenger--Golumbic~\cite{Pippenger75}.
Throughout this work, for positive integers $a_1, \ldots, a_r$, let $K_{a_1, \ldots, a_r}$ denote the complete $r$-partite graph with part sizes $a_1, \ldots, a_r$. 
For positive integers $\ell \ge r$, the \emph{Tur\'{a}n graph} $T_r(\ell)$ denotes the complete $r$-partite graph on $\ell$ vertices in which the largest and smallest part sizes differ by at most one. 
The cases $\ell = r$ and $r=1$ correspond to the complete graph on $r$ vertices and the empty graph on $\ell$ vertices, respectively; both are trivial cases in the inducibility problem. So, we assume for the remainder of this work that $\ell \ge r+1$ and $r\ge 2$.

Results of Pippenger--Golumbic~\cite{Pippenger75} (also Bollob{\'a}s--Nara--Tachibana~\cite{BollobasNara86}) show that for bipartite Tur\'{a}n graphs $F$, the value of $I(F,n)$ is attained by bipartite Tur\'{a}n graphs $T_2(n)$.
Using Zykov symmetrization~\cite{Zyk49} together with additional arguments, Brown--Sidorenko~\cite{Brown94} showed that for every complete multipartite graph $F$, an extremal graph for $I(F,n)$ can always be chosen from the class of complete multipartite graphs; moreover, if $F$ is complete bipartite, then the number of parts can be taken to be at most two.
Nevertheless, determining the number of parts and the ratios of part sizes in the extremal construction for a complete multipartite graph $F$ remains difficult in general. Consequently, the inducibility of complete multipartite graphs is still largely open.

The family of Tur\'an graphs $T_r(\ell)$ has received substantial attention in the literature. 
% Since the inducibility of cliques is well understood, we shall focus on the general case $\ell\geq r+1$. 
Refining an asymptotic result of Brown--Sidorenko~\cite{Brown94} for the balanced complete $r$-partite graph $K_r(t):=T_r(rt)$, Bollob{\'a}s--Egawa--Harris--Jin~\cite{Bollobas95} proved that when $t \ge (1+o(1))\ln r$, the $r$-partite Tur{\'a}n graph $T_r(n)$ is the unique extremal graph, whereas this is not the case if $t < \frac{\ln(r+1)}{r\ln(1+1/r)}$.  
% They further observed that for $r \ge 4$, there is at most one value of $t$ for which it remains unresolved whether $T_r(n)$ is extremal.  
In the last section of~\cite{Bollobas95}, they remarked that (with $f(n, K_r(t))$ below corresponding to $I(K_r(t), n)$ in our notation):
\begin{quote}
    One may also venture the conjecture that for every pair $(r,t)$, $r \ge 4$, $t \ge 2$, if $n$ is sufficiently large then for some $s \ge 0$, $T_{r+s}(n)$ is the unique extremal graph for $f(n, K_r(t))$.
\end{quote}

This conjecture has been open for three decades.
There has been recent progress on the case when $F$ is a Tur\'{a}n graph with each part of small size.  
Mubayi, Reiher, and the first author~\cite[Theorem~1.13]{LiuMubayi23} determined the inducibility when $F$ is a Tur\'{a}n graph in which all but one part have size one.  
This result was extended very recently by Yuster~\cite[Theorem~1.6]{Yuster26}, who determined the inducibility when $F$ is an $r$-partite Tur\'{a}n graph with at most $3r+1$ vertices (in particular, all parts have size at most four).  
He also remarked that it would be highly interesting to determine whether the bound $3r+1$ can be removed, and posed a specific problem concerning the inducibility of Tur\'an graphs in clique-free graphs (see Problem~\ref{PROB:Yuster26} for details).

The main results of this work confirm the conjecture of Bollob{\'a}s--Egawa--Harris--Jin~\cite{Bollobas95} and resolves the above problem of Yuster~\cite{Yuster26} in a stronger form.

%%%%%%%%%%%%%%%%%%%%%%%%%%%%%%%%%%%%%
\subsection{Inducibility of almost balanced graphs}
Let $F = K_{a_1, \ldots, a_r}$ be the complete $r$-partite graph with part sizes $a_1, \ldots, a_r$, and assume that $a_1 \ge \cdots \ge a_r \ge 1$.
Let $\ell \coloneqq a_1 + \cdots + a_r$ denote the number of vertices of $F$.
We say that $F$ is \emph{almost balanced} if it is not the complete graph and 
\begin{align}\label{equ:def-almost-balance}
    \binom{a_1 - a_{r}}{2} < a_{r}, 
\end{align}
or, equivalently, $a_{1} < a_{r} + \tfrac{1}{2}\left(1+\sqrt{8 a_{r}+1}\right)$. 
It is key to observe that for $\ell\geq r+1$, every Tur\'an graph \(T_r(\ell)\) is almost balanced.
Define the constant (depending only on $F$)
\begin{align}\label{equ:def-kappaF}
    \kappa_F 
    \coloneqq \frac{\binom{\ell}{a_1, \ldots, a_r}}{\mathrm{sym}(a_1, \ldots, a_r)}
    = \frac{\ell!}{a_1! \cdots a_{r}! \cdot \mathrm{sym}(a_1, \ldots, a_r)}.
\end{align}
Here, $\mathrm{sym}(a_1, \ldots, a_r)$ denotes the size of the symmetry group of the multiset $\multiset{a_1, \ldots, a_{r}}$. 
In other words, if we let $b_1, \ldots, b_t$ denote the distinct elements of the multiple set $\multiset{a_1, \ldots, a_r}$, occurring with multiplicities $r_1, \ldots, r_t$, respectively,
then 
\begin{align*}
    \mathrm{sym}(a_1, \ldots, a_r)
    = r_1! \cdots r_{t}!. 
\end{align*}
Let $m_{r,\ell}$ denote the unique\footnote{The uniqueness will be proved in Lemma~\ref{Lemma:Unique-Maximizer}.} integer that maximizes the discrete function $f \colon [r, \infty) \to \mathbb{R}$ defined by 
\begin{align}\label{equ:def-fk}
    f(k) 
    \coloneqq \frac{(k-1)_{r-1}}{k^{\ell-1}}
    = \frac{(k-1)\cdots(k-r+1)}{k^{\ell-1}},
    \quad\text{for every integer $k \in [r,\infty)$}.
\end{align}

The main result of this subsection is the following theorem, which confirms the conjecture of Bollob{\'a}s--Egawa--Harris--Jin~\cite{Bollobas95} in a more general setting.

\begin{theorem}\label{THM:almost-balanced-exact}
    Suppose that $F$ is an almost balanced complete $r$-partite graph on $\ell \ge r+1$ vertices, and let $m = m_{r, \ell}$. Then
    \begin{align*}
        i(F) = \kappa_F \cdot \frac{(m-1)_{r-1}}{m^{\ell-1}}. 
    \end{align*}
    Moreover, there exists a constant $N_F$ such that for all $n \ge N_F$, the $m$-partite Tur\'{a}n graph $T_m(n)$ is the unique extremal graph for $I(F,n)$.  
\end{theorem}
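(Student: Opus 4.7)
The plan is to follow a three-step framework: reduce to complete multipartite host graphs, analyze a continuous optimization on the simplex, and then refine to the exact discrete uniqueness statement. For the lower bound, I would directly count induced copies of $F$ in $T_{m}(n)$: pick $r$ of the $m$ parts, distribute the multiset $\multiset{a_1,\ldots,a_r}$ of required sizes among these parts in $r!/\mathrm{sym}(a_1,\ldots,a_r)$ ways, and in the part assigned size $a_j$ choose $a_j$ vertices. A routine calculation using $\binom{n/m}{a_j}=(n/m)^{a_j}/a_j!\,(1+o(1))$ yields $i(F)\ge \kappa_F\cdot (m-1)_{r-1}/m^{\ell-1}$, matching the claimed asymptotic value.

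For the upper bound, I would invoke the Brown--Sidorenko theorem recalled in the introduction to assume that the extremal graph is complete multipartite, $G=K_{n_1,\ldots,n_k}$ for some $k$. Writing $p_i=n_i/n$ and using $\binom{n_i}{a_j}=n_i^{a_j}/a_j!\,(1+o(1))$, the normalized count of induced copies satisfies
\[
\frac{I(F,G)}{\binom{n}{\ell}} = \kappa_F \cdot P(\vec p) + o(1),\qquad P(\vec p)\coloneqq \sum_{(i_1,\ldots,i_r)\text{ distinct}} p_{i_1}^{a_1}\cdots p_{i_r}^{a_r},
\]
so the task reduces to maximizing $P$ over the simplex $\Delta_k=\{\vec p\ge 0:\sum p_i=1\}$ and then over the integer $k\ge r$. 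The central optimization lemma is that the almost-balanced hypothesis $\binom{a_1-a_r}{2}<a_r$ forces $P$ to attain its unique maximum on $\Delta_k$ at the balanced point $(1/k,\ldots,1/k)$, with value $(k)_r/k^\ell$. This would be established by symmetry (so the gradient vanishes at the balanced point) together with a second-order analysis showing that the Hessian restricted to the tangent hyperplane $\{\sum_i \delta p_i=0\}$ is negative definite \emph{exactly} under the almost-balanced inequality; boundary critical points with some $p_i=0$ live in $\Delta_{k-1}$ and are compared to the interior maximum via the ratio $f(k)/f(k-1)$. Combining this with Lemma~\ref{Lemma:Unique-Maximizer} on the unique integer maximizer of $f(k)=(k-1)_{r-1}/k^{\ell-1}$ then yields the matching upper bound $i(F)\le \kappa_F\cdot(m-1)_{r-1}/m^{\ell-1}$.

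For the exact uniqueness statement, I would pass through a stability step: because both the integer maximum of $f$ and the interior maximum of $P$ are strict, any near-extremal complete multipartite $G$ on $n$ vertices must have exactly $k=m$ parts with normalized sizes within $o(1)$ of $1/m$. Once $G=K_{n_1,\ldots,n_m}$ with each $n_i=n/m+o(n)$, a direct swap argument concludes the proof: if some two parts satisfy $n_i\ge n_j+2$, then replacing $(n_i,n_j)$ by $(n_i-1,n_j+1)$ strictly increases the discrete sum $\sum_{\text{distinct }(i_1,\ldots,i_r)}\prod_t\binom{n_{i_t}}{a_t}$, because its discrete derivative along this exchange has a definite sign controlled by the same almost-balanced quantity. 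Iterating shows the maximum is attained only when all $n_i$ differ by at most one, i.e., $G=T_m(n)$.

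The main obstacle is the second-order analysis in the continuous optimization: one must show that $\binom{a_1-a_r}{2}<a_r$ is \emph{exactly} the right threshold for the balanced point to be a strict local (and hence global) maximum of $P$ on $\Delta_k$. The Hessian at the balanced point decomposes into a symmetric bilinear form whose negativity on the tangent space is governed by $\sum_j a_j(a_j-1)$ against $\bigl(\sum_j a_j\bigr)^2/k$-type quantities, and tracking this identification carefully so that the almost-balanced inequality emerges as the sharp threshold is the technical heart of the argument.
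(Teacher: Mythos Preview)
Your outline matches the paper's architecture (Brown--Sidorenko reduction, continuous optimization, stability, discrete swap), but the proposed second-order/Hessian analysis for the continuous step has a genuine gap.

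First, the Hessian at the balanced point does \emph{not} detect the almost-balanced condition. A direct computation (see the paper's concluding remarks) shows that the second variation of $P$ at $\bm{m}=(1/m,\ldots,1/m)$ along the tangent hyperplane is governed by the sign of $\binom{\ell}{2}-m\sum_{k}\binom{a_k}{2}$, not by $\binom{a_1-a_r}{2}<a_r$. The former is a strictly weaker condition (it is implied by almost-balanced via Lemma~\ref{LEMMA:prepare-of-exact}, but not conversely), so the statement that negative definiteness holds ``exactly under the almost-balanced inequality'' is false.

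Second, and more seriously, local concavity at the balanced point is \emph{not} sufficient for global optimality on $\Delta_k$. The paper explicitly records a counterexample: for $F=K_{12,7,7}$ one has $m=3$ and $\binom{26}{2}=325>324=3\bigl(\binom{12}{2}+2\binom{7}{2}\bigr)$, so the Hessian at $(1/3,1/3,1/3)$ is negative definite, yet the unique global maximizer is $(\alpha,(1-\alpha)/2,(1-\alpha)/2)$ with $\alpha\approx 0.397$. Thus a purely second-order argument at the balanced point cannot yield the upper bound, and comparing boundary strata via $f(k)/f(k-1)$ does not rule out unbalanced interior critical points.

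The paper's substitute for this step is a global two-point argument: for any $\bm{x}$ with $x_i\neq x_j$, one of the two operations ``merge $x_i,x_j$ into $x_i+x_j$'' or ``average $x_i,x_j$ to $(x_i+x_j)/2$'' strictly increases $P(\bm{x})$. For $\ell<2r$ this is easy because each $a_k\le 2$ makes $P$ quadratic in $x_ix_j$; for $\ell\ge 2r$ it requires the nontrivial inequality of Proposition~\ref{PROP:a-b-ineq}, namely $\omega_a/\mu_a<\phi_{s,t}/\psi_{s,t}$ for $a\le s\le t\le b$ with $\binom{b-a}{2}<a$, and this is precisely where the almost-balanced hypothesis enters. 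Your discrete swap for the exact statement is essentially the paper's Claim~\ref{CLAIM:exact-n-large-shift} and is fine once the asymptotic step is secured.
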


%We would like to point out that the `almost balanced' condition is necessary for the exact extremal result in the case $r=2$; see more discussion in the concluding remarks.  

In the balanced case $F = K_r(t)$ (i.e., every part of $F$ has the same size $t$), we are able to determine the exact value of $I(F,n)$ for \emph{every} positive integer $n$ (see Theorem~\ref{THM:Kr(t)-exact}). 

It is clear that $I(F,G) = I(\bar{F}, \bar{G})$, where $\bar{F}$ and $\bar{G}$ denote the complements of $F$ and $G$, respectively. Thus $I(F,n) = I(\bar{F}, n)$, and hence, Theorem~\ref{THM:almost-balanced-exact} also determines the inducibility of graphs that are vertex-disjoint unions of cliques, in which the smallest clique size $a_{r}$ and the largest clique size $a_{1}$ satisfy~\eqref{equ:def-almost-balance}.

%%%%%%%%%%%%%%%%%%%%%%%%%%%%%%%%%%%%%
\subsection{Inducibility of almost balanced graphs in $H$-free graphs}
Given a graph $H$, we say that a graph $G$ is {\it $H$-free} if it does not contain $H$ as a (not necessarily induced) subgraph.
Let $I_H(F, n)$ denote the maximum number of induced copies of $F$ in an $H$-free $n$-vertex graph, i.e., 
\begin{align*}
    I_{H}(F, n) 
    \coloneqq \max\big\{ I(F, G) \colon \text{$v(G) = n$ and $G$ is $H$-free} \big\}. 
\end{align*}
The \emph{$H$-free inducibility} of $F$ is then defined as
\begin{align*}
    i_{H}(F) 
    \coloneqq \lim_{n\to \infty} {I_{H}(F,n)}/{\tbinom{n}{v(F)}}.
\end{align*}
When $H = K_{k+1}$ is the complete graph on $k+1$ vertices, we simply write $I_{k+1}(F,n)$ and $i_{k+1}(F)$ instead of $I_{K_{k+1}}(F,n)$ and $i_{K_{k+1}}(F)$.

A classical example is the Erd\H{o}s Pentagon Problem~\cite{Erd84}, which asks for the determination of $I_{3}(C_5, n)$, that is, the maximum number of (induced) copies of $C_5$ in an $n$-vertex $K_{3}$-free graph.  
This problem was solved independently by Grzesik~\cite{Gre12} and Hatami--Hladk\'{y}--Kr\'{a}\v{l}--Norine--Razborov~\cite{HHKNR13} for large $n$, and subsequently by Lidick\'{y}--Pfender~\cite{LP18} for all $n$.

Very recently, Yuster~\cite{Yuster26} initiated a systematic study of inducibility problem in $H$-free graphs.
Among many other results, he determined (see~\cite[Theorem~1.6]{Yuster26}), for all $k \ge r$, the value of $i_{k+1}(F)$ when $F$ is an $r$-partite Tur\'{a}n graph on at most $3r+1$ vertices.  
He also remarked that it would be highly interesting to determine whether the analogous result holds for all $r$-partite Tur\'{a}n graphs, that is, without the restriction on the number of vertices.

\begin{problem}[{\cite[Problem~1.8]{Yuster26}}]\label{PROB:Yuster26}
    Is it true that for all $2 \le r < \ell$, there exists $t = t(r,\ell)$ such that the following holds?
    Let $F$ be the $r$-partite Tur\'{a}n graph on $\ell$ vertices.
    For all $k \le t$, $i_{k+1}(F)$ is attained asymptotically by the $k$-partite Tur\'{a}n graphs, and for all $k \ge t+1$, $i_{k}(F)$, and hence also $i(F)$, is attained asymptotically by the $t$-partite Tur\'{a}n graphs. 
\end{problem}

In the following theorem, we completely resolve Yuster’s problem (in fact, for a broader family) and determine the value of $t = t(r,\ell)$, namely the integer $m_{r,\ell}$ defined in the previous subsection.

\begin{theorem}\label{THM:K-free-almost-balanced-asymptotic}
    Suppose that $F$ is an almost balanced complete $r$-partite graph on $\ell \ge r+1$ vertices, and let $m = m_{r, \ell}$. Then 
    \begin{align*}
        i_{k+1}(F)
        = 
        \begin{cases}
            \kappa_F \cdot \frac{(k-1)_{r-1}}{k^{\ell-1}}, &\quad\text{if}\quad k \in [r, m-1], \\[.3em]
            \kappa_F \cdot \frac{(m-1)_{r-1}}{m^{\ell-1}}, &\quad\text{if}\quad k \in [m, \infty).
        \end{cases}
    \end{align*}  
\end{theorem}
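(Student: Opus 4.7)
The plan is to handle the cases $k \geq m$ and $r \leq k \leq m-1$ separately. In the easier regime $k \geq m$, both bounds follow almost immediately from Theorem~\ref{THM:almost-balanced-exact}: the Tur\'an graph $T_m(n)$ is $K_{m+1}$-free and hence $K_{k+1}$-free, and it realizes the asymptotic density $\kappa_F \cdot (m-1)_{r-1}/m^{\ell-1}$ as $n\to\infty$; conversely $i_{k+1}(F) \leq i(F) = \kappa_F \cdot (m-1)_{r-1}/m^{\ell-1}$ by the definition of $i_{k+1}$ and Theorem~\ref{THM:almost-balanced-exact}. In the main regime $r \leq k \leq m-1$, the lower bound is supplied by $T_k(n)$, which is $K_{k+1}$-free and whose induced $F$-density tends to $\kappa_F \cdot (k-1)_{r-1}/k^{\ell-1}$ by a direct computation using equal part sizes of order $n/k$.

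For the upper bound in the main regime, I would adapt the strategy of Theorem~\ref{THM:almost-balanced-exact} to the $K_{k+1}$-free class in three steps. First, a Zykov-style symmetrization along the lines of Brown--Sidorenko~\cite{Brown94} reduces to complete multipartite extremal graphs. The key verification is that replacing the neighborhood of a vertex $v$ by that of a non-adjacent vertex $u$ (with at least as large a local $F$-contribution) does not create any new $K_{k+1}$: a new clique through $v$ would transfer to a clique through $u$ in the old graph, since $v$ now shares $u$'s neighborhood. Iterating, the extremal graph is complete $s$-partite for some $s$, and the $K_{k+1}$-free hypothesis forces $s \leq k$. Second, for complete $s$-partite $G$ with part proportions $(x_1, \ldots, x_s)$ summing to $1$,
\begin{align*}
    \frac{I(F, G)}{\binom{n}{\ell}} \;\longrightarrow\; \kappa_F \cdot g_s(\mathbf{x}), \qquad g_s(\mathbf{x}) \coloneqq \sum_{\substack{(i_1, \ldots, i_r) \\ \text{distinct in }[s]}} x_{i_1}^{a_1} \cdots x_{i_r}^{a_r},
\end{align*}
so the problem reduces to maximizing $g_s$ on the standard simplex for $s \in [r, k]$. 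Third, one shows that $g_s$ attains its maximum at the symmetric point $\mathbf{x} = (1/s, \ldots, 1/s)$, with value $(s)_r/s^{\ell} = f(s)$, and then invokes the monotonicity of $f$ on integers of $[r, m]$ (inherent in Lemma~\ref{Lemma:Unique-Maximizer}) to conclude $\max_{r \leq s \leq k} f(s) = f(k)$.

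The hard step will be the symmetric-point optimality of $g_s$, and this is exactly where the almost-balanced hypothesis $\binom{a_1 - a_r}{2} < a_r$ gets its bite: it governs the second-order behavior of $g_s$ around $(1/s, \ldots, 1/s)$ and forbids asymmetric maximizers. I expect this piece to mirror, or directly reuse, the analogous calculation inside the proof of Theorem~\ref{THM:almost-balanced-exact}, appropriately restricted to simplices of dimension at most $k$ rather than the unconstrained optimum. The symmetrization step is only a mild technicality in the $K_{k+1}$-free setting once the clique-preservation observation above is in place. Matching the $T_k(n)$ lower bound with this upper bound then completes the case $k \leq m-1$ and, together with the easy case $k \geq m$, yields the stated formula for $i_{k+1}(F)$.
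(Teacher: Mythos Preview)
Your overall strategy matches the paper's: Zykov symmetrization (which indeed preserves $K_{k+1}$-freeness, as you verify) reduces to complete multipartite graphs with at most $k$ parts, the problem becomes maximizing $g_k$ over the $(k-1)$-simplex, one shows every maximizer is balanced, and then the unimodality of $f$ from Lemma~\ref{Lemma:Unique-Maximizer} determines which balanced point wins. For $k\ge m$ you may quote Theorem~\ref{THM:almost-balanced-exact} as a black box; in the paper both that theorem and the present one are instead derived from a common optimization statement (Theorem~\ref{THM:almost-balanced-asymptotic}), but your ordering is logically fine.

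One caution on the hard step. Your phrasing suggests a second-order analysis of $g_s$ near $(1/s,\ldots,1/s)$, but a Hessian computation there would only certify a \emph{local} maximum, not the global one on the simplex, and would not by itself rule out asymmetric maxima on the boundary. The paper's mechanism is different and global: at any unbalanced $\bm{x}$ with $x_i\neq x_j$, at least one of the two moves $(x_i,x_j)\mapsto(x_i+x_j,0)$ or $(x_i,x_j)\mapsto\bigl(\tfrac{x_i+x_j}{2},\tfrac{x_i+x_j}{2}\bigr)$ strictly increases the value. For $\ell<2r$ this follows from a short quadratic argument in $\alpha=x_ix_j$ with $x_i+x_j$ held fixed; for $\ell\ge 2r$ it rests on a nontrivial inequality (Proposition~\ref{PROP:a-b-ineq}), and it is precisely here that the almost-balanced hypothesis $\binom{a_1-a_r}{2}<a_r$ does its work. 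So when you ``reuse the analogous calculation,'' look for this merge/average improvement step rather than a local convexity check; the latter would leave a genuine gap.
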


%%%%%%%%%%%%%%%%%%%%%%%%%%%%%%%%%%%%%
\subsection{Perfect stability}\label{subsec:Perfect-Stability}
An interesting phenomenon, and also a very useful tool in extremal combinatorics, is \emph{stability}, introduced in the seminal work of Simonovits~\cite{Sim68}. 
Partly inspired by the work of Norin--Yepremyan~\cite{NY17tri}, a general framework for establishing a strong form of stability in certain graph extremal problems that can be solved using the Zykov symmetrization was developed recently by Liu--Pikhurko--Sharifzadeh--Staden~\cite{LiuPikhurko23}.
This framework will be used to prove the second  assertion (i.e. the exact result) of Theorem~\ref{THM:almost-balanced-exact}.

For two graphs $G$ and $H$ with the same number of vertices, the \emph{edit distance} $\mathrm{edit}(G, H)$ between $G$ and $H$ is the minimum number of edges one needs to add or remove from $G$ to make it isomorphic~to~$H$.

\begin{definition}[{\cite[Definition~2]{LiuPikhurko23}}]
    The inducibility problem for a complete multipartite graph $F$ is \emph{perfectly stable} if there exists a constant $C > 0$ such that for every $n$-vertex graph $G$ with $n \geq C$, there is an $n$-vertex complete multipartite graph $H$ satisfying
    \begin{align*}
        \mathrm{edit}(G,H) 
        \leq C  \cdot \frac{I(F,n)-I(F,G)}{\tbinom{n}{v(F)}} \binom{n}{2}.
    \end{align*}
\end{definition}
In particular, perfect stability of $F$ implies that for all sufficiently large $n$, \emph{every} extremal graph for the inducibility problem $I(F,n)$ is complete multipartite. 

By the result of Brown and Sidorenko~\cite[Proposition~1]{Brown94}, the inducibility problem for complete multipartite graphs can be solved using Zykov symmetrization. 
Thus, the framework from~\cite{LiuPikhurko23} can be applied to this problem. 
Indeed, as applications, Liu--Pikhurko--Sharifzadeh--Staden~\cite{LiuPikhurko23} established perfect stability for the inducibility problem of $F$ when $F$ is a complete bipartite graph, a complete $r$-partite graph with each part of size $t > 1 + \ln r$, and for small cases such as $K_{2,1,1,1}$ and $K_{3,1,1}$.  
They further conjectured~\cite[Conjecture~1]{LiuPikhurko23} that the inducibility problem is perfectly stable for every complete multipartite graph $F$. 

We confirm their conjecture for all almost balanced complete multipartite graphs.

\begin{theorem}\label{THM:almost-balanced-perfect-stability}
      Suppose that $F$ is an almost balanced complete multipartite graph. Then the inducibility problem $I(F,n)$ is perfectly stable.
\end{theorem}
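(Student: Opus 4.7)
The plan is to invoke the perfect-stability framework developed by Liu--Pikhurko--Sharifzadeh--Staden~\cite{LiuPikhurko23}, which reduces perfect stability of an inducibility problem solvable by Zykov symmetrization to verifying two local properties at the asymptotic optimum: (i) a \emph{strict} form of Zykov symmetrization, asserting that merging two non-twin vertex classes in a complete multipartite host strictly increases the induced count of $F$; and (ii) strong local optimality at the weighted optimum, in the form of a strictly negative-definite Hessian of the induced density on the tangent space of the simplex of part weights.

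For any complete multipartite target $F$, property (i) is essentially contained in the Brown--Sidorenko proof~\cite{Brown94} that extremal hosts for $I(F,n)$ are complete multipartite, so I would not need to redo that argument. It remains to verify (ii). The asymptotic part of Theorem~\ref{THM:almost-balanced-exact} already identifies the weighted optimum: take $k = m_{r,\ell}$ blocks with each $x_i = 1/m$. Writing the induced $F$-density in a weighted complete multipartite host as a symmetric polynomial $\phi_F(\vec x)$ in the part weights $x_1,\dots,x_m$ (summed over all assignments of the $r$ parts of $F$ to the $m$ blocks, with the appropriate symmetry factor $\kappa_F$), the problem reduces to showing that the balanced point $\vec x^* = (1/m,\dots,1/m)$ is both the unique maximizer of $\phi_F$ on the standard simplex and has a strictly negative-definite Hessian along the tangent hyperplane $\{\sum \varepsilon_i = 0\}$.

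The crux of the argument is this Hessian computation. Substituting $x_i = 1/m + \varepsilon_i$ and expanding $\phi_F$ to second order yields a quadratic form in $\vec\varepsilon$ that splits into a principal \emph{symmetric} contribution, governed by a linear functional of the part sizes $a_1,\dots,a_r$, and a \emph{defect} contribution driven by the spread $a_1 - a_r$. The main obstacle is to show that the symmetric contribution strictly dominates the defect contribution on the hyperplane $\sum \varepsilon_i = 0$ under the hypothesis~\eqref{equ:def-almost-balance}. I anticipate a diagonal term of order $a_r$ competing with an off-diagonal term of order $\binom{a_1 - a_r}{2}$, so that the almost-balanced inequality $\binom{a_1 - a_r}{2} < a_r$ is precisely the threshold ensuring strict negative-definiteness; this also recovers the previously known cases (balanced $F$ with each part size $t > 1 + \ln r$, and complete bipartite $F$) as corollaries. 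Once this quadratic-form inequality is established, the Liu--Pikhurko--Sharifzadeh--Staden machinery directly delivers perfect stability with a constant $C$ depending only on $F$, completing the proof.
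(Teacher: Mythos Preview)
Your plan misidentifies what the Liu--Pikhurko--Sharifzadeh--Staden framework actually asks you to check. In the form used here (Theorem~\ref{THM:Stability-Verification}), the two conditions are not ``strict Zykov merging'' and ``negative-definite Hessian'' but rather: (S1) flipping any single edge in the realization $G_{n,\bm m}$ decreases $I(F,\cdot)$ by at least $\varepsilon n^{\ell-2}$, and (S2) attaching a new vertex with adjacency pattern $A\subseteq[m]$ is strictly worse than the pattern $|A^*|=m-1$ whenever $|A|\neq m-1$. Neither of these is what you describe.

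On your item (i): Brown--Sidorenko gives only that Zykov symmetrization is \emph{non-decreasing} for $I(F,\cdot)$; strict increase does not follow. In fact (S1) requires a quantitative gap at the \emph{specific} optimum $\bm m$, and the paper spends real effort here: for $x,y$ in distinct parts one computes $|\mathcal C|-|\mathcal C'|$ and shows positivity using the upper bound $m<\tfrac{\ell(\ell-1)}{2(\ell-r)}$ from Lemma~\ref{Lemma:m-bounds}; for $x,y$ in the same part one needs the lower bound $m>\tfrac{\ell(r-1)}{2(\ell-r)}$. These arithmetic bounds on $m_{r,\ell}$ are the engine of the proof and do not appear in your plan at all.

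On your item (ii): a Hessian computation at $\bm m$ controls only infinitesimal perturbations of the part weights. Condition (S2), by contrast, is a discrete statement comparing attachment patterns $A$ of \emph{all} sizes $q\in\{0,\ldots,m\}\setminus\{m-1\}$; the relevant function is $q\mapsto (m-q)(q)_{r-1}$ (or its analogue for $\ell<2r$), and one must show it is uniquely maximized at $q=m-1$. This is Lemmas~\ref{LEMMA:stable-small-inequality} and~\ref{LEMMA:stable-large-inequality}, again proved via the bounds on $m$. Your anticipated dichotomy ``diagonal term of order $a_r$ versus off-diagonal of order $\binom{a_1-a_r}{2}$'' does not arise: the almost-balanced inequality~\eqref{equ:def-almost-balance} is not used anywhere in the perfect-stability proof itself --- it was already consumed in Section~\ref{SEC:asymptotic} to establish $\OPT(F)=\{\bm m\}$, and the stability verification proceeds purely from the location of $m$ relative to $r$ and $\ell$. (The second-order condition you are reaching for, $\binom{\ell}{2}>m\sum_k\binom{a_k}{2}$, does appear --- but in Section~\ref{Sec:Strongly-balanced-exact} for the exact result, not here.)
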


\textbf{Organization}: 
In Section~\ref{Sec:Preliminaries}, we present some necessary definitions and preliminary results.  
In Section~\ref{SEC:asymptotic}, we determine the values of $i(F)$ and $i_{k+1}(F)$ for almost balanced complete multipartite graphs $F$.  
For technical reasons, the proofs are divided into two parts: first, we handle the case $\ell \le 2r-1$ (Subsection~\ref{SUBSEC:proof-asymptotic-small}), and then the case $\ell \ge 2r$ (Subsection~\ref{SUBSEC:proof-asymptotic-large}).  
In Section~\ref{Sec:Stablity}, we present the proof of Theorem~\ref{THM:almost-balanced-perfect-stability} (see Subsections~\ref{SUBSEC:proof-perfect-stability-small} and~\ref{SUBSEC:proof-perfect-stability-large}).  
In Section~\ref{Sec:Strongly-balanced-exact}, we determine the unique extremal construction for $I(F,n)$ for sufficiently large $n$, thereby completing the proof of Theorem~\ref{THM:almost-balanced-exact}.  
The last section (Section~\ref{SEC:remarks}) contains some concluding remarks.

%%%%%%%%%%%%%%%%%%%%%%%%%%%%%%%%%%%%%%%%%%%%%%%%%%%%%%
\section{Preliminaries}\label{Sec:Preliminaries}
In this section, we introduce some necessary definitions and preliminary results.  

Let $k$ be a positive integer. 
For every real number $z$, define $\binom{z}{k} = (z)_{k}/k!$, where 
\begin{align*}
    (z)_{k}
    \coloneqq 
    \begin{cases}
        0, &\quad\text{if}\quad z \le k-1, \\
        z \cdots (z-k+1), &\quad\text{if}\quad z > k-1. 
    \end{cases}
\end{align*}
For a positive integer $k$, let $[k] \coloneqq \{1, \ldots, k\}$. 
We denote by $\mathbb{N}$ the set of natural numbers $\{0,1,2,\dots\}$, and by $\mathbb{N}_+$ the set of positive integers $\{1,2,3,\dots\}$.
Given a (possibly infinite) set $S \subseteq \mathbb{N}$, let $(S)_r$ denote the  collection of ordered $r$-tuples with pairwise distinct entries,  that is,
\begin{align*}
    (S)_r
    \coloneqq \big\{ (x_1, \ldots, x_r) \in S^{r} \colon \text{$x_i \neq x_j$ for all distinct $i,j \in [r]$} \big\}.
\end{align*}
In particular,
\begin{align*}
    (\mathbb{N})_r
    & \coloneqq \big\{ (x_1, \ldots, x_r) \in \mathbb{N}^{r} \colon \text{$x_i \neq x_j$ for all distinct $i,j \in [r]$} \big\}, \\[.2em]
    (\mathbb{N}_{+})_r
    & \coloneqq \big\{ (x_1, \ldots, x_r) \in \mathbb{N}_{+}^{r} \colon \text{$x_i \neq x_j$ for all distinct $i,j \in [r]$} \big\}.
\end{align*}

For a complete multipartite graph $F$, let $\sc(F)$ denote the number of parts of $F$ that have size exactly one, that is, the number of singleton parts.  
We have the following simple fact for counting the number of induced copies of $F$ in a complete multipartite graph.

\begin{fact}\label{FACT:Directly-Computation}
    Let $F = K_{a_1,\ldots,a_r}$ be the complete $r$-partite graph with part sizes $a_1 \ge \cdots \ge a_r \ge 1$.
    Suppose that $G$ is a complete multipartite graph whose non-singleton part sizes are $b_1,\ldots,b_k$.   
    Then 
    \begin{align*}
        I(F,G) 
        = \frac{1}{\mathrm{sym}(a_1, \ldots, a_{r})} \sum_{i = 0}^{\sc (F)} i! \binom{\sc (F)}{i} \binom{\sc (G)}{i} 
        \sum_{(i_1,\ldots, i_{r-i}) \in ([k])_{r-i}}~\prod_{j=1}^{r-i} \binom{b_{i_j}}{a_j}.
    \end{align*}
    In particular, if $\sc(F) = 0$ or $\sc(G) = 0$, then 
    \begin{align}\label{equ:IFG-formula}
        I(F,G) 
        = \frac{1}{\mathrm{sym}(a_1,\ldots,a_{r})} \sum_{(i_1,\ldots, i_{r}) \in ([k])_{r}}~\prod_{j=1}^{r} \binom{b_{i_j}}{a_j}.
    \end{align}
\end{fact}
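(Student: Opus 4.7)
The plan is to set up a double-counting argument. I will first count the ordered configurations: tuples $(P_1,\ldots,P_r)$ of pairwise disjoint subsets of $V(G)$ with $|P_j|=a_j$ such that the induced subgraph $G[P_1\cup\cdots\cup P_r]$, with the prescribed partition into $P_1,\ldots,P_r$, forms the complete $r$-partite graph on these parts (hence is isomorphic to $F$). Each induced copy of $F$ in $G$ admits a canonical unordered partition into its own parts (namely, the equivalence classes under the non-adjacency relation on the copy), and the number of ways to list these parts as an ordered tuple $(P_1,\ldots,P_r)$ in accordance with the prescribed size sequence $(a_1,\ldots,a_r)$ is precisely $\mathrm{sym}(a_1,\ldots,a_r)$. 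Dividing the ordered count by $\mathrm{sym}(a_1,\ldots,a_r)$ will therefore yield $I(F,G)$.

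I then analyze the structural constraints on valid ordered tuples. Since the vertices within each $P_j$ are pairwise non-adjacent in $G$, the set $P_j$ must be contained in a single part of $G$; and since vertices in different $P_j$'s must be adjacent, distinct $P_j$'s use distinct parts of $G$. Hence any position $j$ with $a_j\geq 2$ is forced into a non-singleton part of $G$, while a singleton position is free to go into either a singleton or a non-singleton part. I will then classify ordered tuples by the integer $i$ equal to the number of singleton positions of $F$ embedded into singleton parts of $G$. Assembling the counts, there are $\binom{\sc(F)}{i}$ choices of which singleton positions of $F$ are routed to singletons of $G$, and $i!\binom{\sc(G)}{i}$ ways of assigning them injectively to singletons of $G$ (each supplying its unique vertex). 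The remaining $r-i$ positions of $F$ have sizes $a_1,\ldots,a_{r-i}$ in the sorted convention, and must be injectively assigned to the non-singleton parts of $G$, contributing the inner sum $\sum_{(i_1,\ldots,i_{r-i})\in([k])_{r-i}}\prod_{j=1}^{r-i}\binom{b_{i_j}}{a_j}$. Summing over $i$ and dividing by $\mathrm{sym}(a_1,\ldots,a_r)$ yields the asserted identity.

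The ``in particular'' statement follows by inspection. If $\sc(F)=0$, then the outer sum contains only the $i=0$ term. If $\sc(G)=0$, then $\binom{\sc(G)}{i}=0$ for every $i\geq 1$, so again only the $i=0$ term survives. Either case reduces directly to~\eqref{equ:IFG-formula}. No step is genuinely difficult; the two points that need a little care are the symmetry-factor computation in the opening double-counting step, and the observation that after deleting any $i$ singleton positions from the sorted sequence $a_1\geq\cdots\geq a_r$ the remainder is exactly $a_1\geq\cdots\geq a_{r-i}$, so the notation $a_j$ inside the inner product is unambiguous.
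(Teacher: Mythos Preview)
Your proposal is correct and is precisely the natural argument one would give; the paper itself states this result as a ``simple fact'' and offers no proof, so there is nothing to compare against beyond noting that your double-counting via ordered part-tuples, stratified by the number $i$ of singleton positions sent to singleton parts of $G$, is exactly the computation the authors have in mind.
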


A more convenient version of Fact~\ref{FACT:Directly-Computation} for the case in which each part of $G$ has approximately the same size and each part is large, which will be useful in the proof of perfect stability, is as follows.
\begin{fact}\label{FACT:Directly-Computation-handy}
    Let $F = K_{a_1,\ldots,a_r}$ be the complete $r$-partite graph with part sizes $a_1 \ge \cdots \ge a_r \ge 1$.  
    Suppose that $G$ is a complete $m$-partite graph in which each part has size $(1+o(1))n$, where $n$ is large.
    Then
    \begin{align}\label{equ:IFG-handy}
        I(F,G) 
        = (1+o(1)) \frac{(m)_{r}}{a_1! \cdots a_{r}! \cdot \mathrm{sym}(a_1,\ldots,a_{r})} n^{\ell}.
    \end{align}
\end{fact}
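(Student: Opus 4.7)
The plan is to reduce the statement directly to Fact~\ref{FACT:Directly-Computation} and then perform a straightforward asymptotic substitution. First I would observe that since each of the $m$ parts of $G$ has size $(1+o(1))n$ and $n$ is large, every part is non-singleton for all sufficiently large $n$, so $\sc(G) = 0$. This allows me to invoke the simplified identity~\eqref{equ:IFG-formula}, which expresses $I(F,G)$ as $\mathrm{sym}(a_1,\ldots,a_r)^{-1}$ times a sum over $(i_1,\ldots,i_r) \in ([m])_r$ of products $\prod_{j=1}^r \binom{b_{i_j}}{a_j}$, where $b_1,\ldots,b_m$ denote the part sizes of $G$.

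The next step is a routine asymptotic estimate. Since each $a_j$ is a fixed constant depending only on $F$ and each $b_{i_j} = (1+o(1))n$, I have $\binom{b_{i_j}}{a_j} = (1+o(1))\, n^{a_j}/a_j!$. Multiplying over $j \in [r]$ and using $a_1+\cdots+a_r = \ell$, each summand contributes $(1+o(1))\, n^\ell/(a_1!\cdots a_r!)$. Since the sum has exactly $(m)_r$ terms and $r$ is bounded, the implicit error factors can be absorbed uniformly, giving the stated formula~\eqref{equ:IFG-handy}.

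I do not expect any genuine obstacle here; the statement is essentially Fact~\ref{FACT:Directly-Computation} rephrased under the simplification that all part sizes of $G$ are roughly equal. The only point requiring a moment of care is the uniformity of the implicit error terms across the $(m)_r$ summands, which is immediate because the hypothesis $b_i = (1+o(1))n$ holds simultaneously for every part, and both $r$ and the $a_j$ are fixed. If $m < r$ then $(m)_r = 0$ and the claim is vacuous since the clique number of $G$ is $m < r \le \omega(F)$, so $I(F,G) = 0$ as well.
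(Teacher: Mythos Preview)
Your proposal is correct and matches the paper's intended approach: the paper does not give an explicit proof of this Fact, treating it as an immediate corollary of Fact~\ref{FACT:Directly-Computation} via exactly the asymptotic substitution you describe. Your handling of the $\sc(G)=0$ reduction, the uniform $o(1)$ absorption, and the vacuous $m<r$ case are all in order.
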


A foundation of our approach is the following result of Brown--Sidorenko~\cite{Brown94}.

\begin{lemma}[{\cite[Proposition 1]{Brown94}}]\label{Lemma:Sidorenko-Extremal-Partite}
    Suppose that $F$ is a complete multipartite graph with at least two parts. Then for every $n \ge 1$, there exists a complete multipartite graph $G$ on $n$ vertices such that $I(F,G) = I(F,n)$.
\end{lemma}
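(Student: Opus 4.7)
The plan is to apply Zykov's symmetrization. Let $G^{*}$ be an extremal $n$-vertex graph with $I(F, G^{*}) = I(F, n)$. For two non-adjacent vertices $u, v$ of a graph $G$, let $G_{v \to u}$ denote the graph obtained by modifying $v$'s edges so that $v$ becomes a twin of $u$ (that is, $v$ stays non-adjacent to $u$ and $N_{G_{v \to u}}(v) = N_G(u)$); define $G_{u \to v}$ symmetrically. The goal is to show that iterating such clone operations on an extremal graph always leads to a complete multipartite graph without decreasing $I(F, \cdot)$.

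The first step is to establish the averaging inequality
\[
    I(F, G_{v \to u}) + I(F, G_{u \to v}) \ge 2\, I(F, G),
\]
from which extremality of $G$ forces both $G_{v \to u}$ and $G_{u \to v}$ to be extremal as well. I would partition the induced copies of $F$ in $G$ by their intersection with $\{u, v\}$ into sets $\mathcal{A}_\emptyset, \mathcal{A}_u, \mathcal{A}_v, \mathcal{A}_{uv}$. Copies in $\mathcal{A}_\emptyset$ are unaffected by either operation. Each $T \in \mathcal{A}_u$ survives in $G_{v \to u}$ (as $u$'s edges are unchanged), and the twin swap $T \mapsto (T \setminus \{u\}) \cup \{v\}$ produces a second copy in $G_{v \to u}$ containing $v$ but not $u$, so $\mathcal{A}_u$ contributes $2|\mathcal{A}_u|$ to $I(F, G_{v \to u})$; similarly $\mathcal{A}_v$ contributes $2|\mathcal{A}_v|$ to $I(F, G_{u \to v})$. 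The crucial step is $\mathcal{A}_{uv}$: since $F$ is complete multipartite and $u, v$ are non-adjacent, any copy $T \in \mathcal{A}_{uv}$ must place $u$ and $v$ in the same part of $F$, forcing $N_G(u) \cap (T \setminus \{u,v\}) = N_G(v) \cap (T \setminus \{u,v\})$. Hence replacing $v$'s edges by $u$'s does not alter the induced subgraph on $T$, and this gives an injection from $\mathcal{A}_{uv}$ into the copies of $F$ in $G_{v \to u}$ containing both $u$ and $v$; the same holds for $G_{u \to v}$. Summing the four contributions and comparing with $2\, I(F, G)$ yields the desired inequality.

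The second step, and the main technical obstacle, is to iterate the symmetrization and show it terminates at a complete multipartite graph. The natural approach is to choose an extremal graph $G^{*}$ maximizing a suitable potential, for instance the number of unordered twin pairs $\{u, v\}$ with $N(u) = N(v)$ (possibly combined with a lexicographic tiebreaker on the sorted degree sequence), and to show that if $G^{*}$ were not complete multipartite, then there would exist non-adjacent $u, v$ with $N(u) \ne N(v)$ whose symmetrization strictly increases the potential, contradicting maximality. The difficulty is that a single clone operation can simultaneously create and destroy twin pairs among the vertices of $N(u) \triangle N(v)$, so the potential and the choice of pair $(u,v)$ must be calibrated carefully to guarantee strict monotonicity; arranging this is the crux of the Brown--Sidorenko argument.
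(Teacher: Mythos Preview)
Your first step—the averaging inequality $I(F, G_{v \to u}) + I(F, G_{u \to v}) \ge 2\, I(F, G)$ via the partition $\mathcal{A}_\emptyset, \mathcal{A}_u, \mathcal{A}_v, \mathcal{A}_{uv}$—is correct and is precisely the Zykov symmetrization idea the paper invokes from Brown--Sidorenko (the paper itself gives only a one-line sketch, so your account is in fact more detailed than what appears there). The key observation, that every $T \in \mathcal{A}_{uv}$ places $u$ and $v$ in the same part of $F$ and therefore survives either clone intact, is right.

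Your second step, however, is overcomplicated and left unfinished; you describe the difficulty but do not resolve it. No potential function is needed. Once you know that every symmetrization of an extremal graph is again extremal, proceed greedily: pick any vertex $u_1$ in an extremal $G$ and sequentially clone each non-neighbour of $u_1$ to $u_1$. Since $u_1$'s own neighbourhood is never altered by these clones, after this round the closed non-neighbourhood $V_1$ of $u_1$ has become an independent set completely joined to $V(G)\setminus V_1$, and the graph remains extremal. Now recurse inside $V(G)\setminus V_1$ with a new vertex $u_2$. A clone performed in round $i$ cannot disturb the already-built parts $V_1,\ldots,V_{i-1}$, because each such $V_j$ lies entirely in $N(u_i)$ both before and after the clone. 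After at most $n$ clones the graph is complete multipartite. Your worry about twin pairs being simultaneously created and destroyed is a red herring once the symmetrizations are organised vertex by vertex in this way; calling this ``the crux of the Brown--Sidorenko argument'' overstates its difficulty.
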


Roughly speaking, the main idea in the proof of Lemma~\ref{Lemma:Sidorenko-Extremal-Partite} is to show that Zykov symmetrization does not decrease the number of induced copies of $F$.  
Thus, after finitely many symmetrizations, one always ends up with a complete multipartite graph.
Since Zykov symmetrization also preserves $K_{k+1}$-freeness, the proof of Lemma~\ref{Lemma:Sidorenko-Extremal-Partite} implies that (as observed in~\cite{Yuster26}), when $F$ is complete multipartite, there exists an extremal construction for the inducibility problem $I_{k+1}(F, n)$ that is complete multipartite (with at most $k$ parts due to the $K_{k+1}$-freeness).

\begin{lemma}\label{Lemma:Sidorenko-Extremal-Partite-H-free}
    Suppose that $F$ is a complete $r$-partite graph with $r\ge 2$, and let $k \ge r$ be an integer. 
    Then for every $n \ge 1$, there exists a complete multipartite graph $G$ on $n$ vertices with at most $k$ parts such that $I(F,G) = I_{k+1}(F,n)$.
\end{lemma}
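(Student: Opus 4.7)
My plan is to adapt the proof of Lemma~\ref{Lemma:Sidorenko-Extremal-Partite} by observing that Zykov symmetrization preserves $K_{k+1}$-freeness. First I would take an $n$-vertex $K_{k+1}$-free graph $G_{0}$ with $I(F, G_{0}) = I_{k+1}(F,n)$ and iteratively apply Zykov symmetrization: as long as there exist two non-adjacent vertices $u, v$ with distinct open neighborhoods, replace the neighborhood of one by that of the other, choosing the direction so that $I(F,\cdot)$ does not decrease. The existence of such a direction is exactly the symmetrization step of Brown--Sidorenko used in the proof of Lemma~\ref{Lemma:Sidorenko-Extremal-Partite} and available whenever $F$ is complete multipartite.

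The new ingredient is that each symmetrization step preserves $K_{k+1}$-freeness. Say we copy the neighborhood of $u$ onto $v$. The edges within $V(G) \setminus \{v\}$ are untouched, and after the operation $u$ and $v$ remain non-adjacent with identical neighborhoods. Any hypothetical $K_{k+1}$ in the new graph contains at most one of $u, v$: if it avoids $v$, then it already sits in $V(G) \setminus \{v\}$ with its edges unchanged; if it contains $v$ but not $u$, then replacing $v$ by $u$ produces a $K_{k+1}$ in $V(G) \setminus \{v\}$ using only the unchanged edges. Either way the original graph would have contained a $K_{k+1}$, a contradiction.

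Each step strictly reduces the number of distinct open neighborhoods, so the process terminates after finitely many steps and produces a graph $G$ in which any two non-adjacent vertices share the same neighborhood. This is equivalent to non-adjacency being an equivalence relation on $V(G)$, i.e., $G$ being a complete multipartite graph. By construction $I(F, G) \ge I(F, G_{0}) = I_{k+1}(F, n)$ and $G$ is still $K_{k+1}$-free, forcing equality. Finally, any complete multipartite graph with $k+1$ or more parts contains $K_{k+1}$, so $G$ has at most $k$ parts, as required.

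The only genuinely new step relative to Lemma~\ref{Lemma:Sidorenko-Extremal-Partite} is the $K_{k+1}$-freeness preservation under symmetrization sketched above, and this is a short direct check, so I do not anticipate any real obstacle.
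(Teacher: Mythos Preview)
Your approach is exactly the paper's: it does not give a standalone proof of this lemma but simply remarks that Zykov symmetrization preserves $K_{k+1}$-freeness, so the Brown--Sidorenko argument behind Lemma~\ref{Lemma:Sidorenko-Extremal-Partite} applies unchanged, and $K_{k+1}$-freeness of the resulting complete multipartite graph forces at most $k$ parts. One small caveat: a single symmetrization step need not \emph{strictly} reduce the number of distinct open neighborhoods (for instance, if the vertex whose neighborhood you overwrite already has a twin and no two other vertices differ only at that vertex, the count stays the same), so your stated termination reason is not quite correct as written; termination is nevertheless standard and is part of the Brown--Sidorenko proof you are invoking, so this does not affect the argument.
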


Therefore, by Lemmas~\ref{Lemma:Sidorenko-Extremal-Partite} and~\ref{Lemma:Sidorenko-Extremal-Partite-H-free}, in determining $i(F)$ and $i_{k+1}(F)$, it suffices to restrict our attention to the class of complete multipartite graphs, which, in the limit, reduces to an optimization problem of homogeneous polynomials (see~\cite[Corollary~6]{Brown94}).  
Since we also aim to prove perfect stability, we need to consider a slightly more general polynomial (introduced in~\cite{LiuPikhurko23}, see~\eqref{equ:pF-general}) and prove the finiteness of the set of maximizers.  
For this, we require the following definitions, mostly taken from~\cite[Section~2]{LiuPikhurko23}.

Define the \emph{partite limit space} by
\begin{align*}
    \overline{\mathcal{P}}
    \coloneqq \Big\{ (x_1,x_2,\ldots) \in \mathbb{R}^{\mathbb{N}_{+}} \colon \ x_1 \ge x_2 \ge \cdots \ge 0 \text{ and } \sum_{i \in \mathbb{N}_{+}} x_i \le 1 \Big\}.
\end{align*}

For every sequence $\bm{x} = (x_1, x_2, \ldots) \in \overline{\mathcal{P}}$, let $x_0 \coloneqq 1 - \sum_{i \in \mathbb{N}_{+}} x_i$.  
Define the \emph{support} of $\bm{x}$ as
\begin{align*}
    \supp(\bm{x}) 
    \coloneqq \{ i \in \mathbb{N}_{+} \colon x_i > 0 \}. 
\end{align*}

A sequence in $\overline{\mathcal{P}}$ gives rise to a complete multipartite graph on $n$ vertices via the following definition (the converse is straightforward).

\begin{definition}[{\cite[Definition~3]{LiuPikhurko23}}]\label{DEF:realization}
    Given $\bm{x} = (x_1, x_2, \ldots) \in \overline{\mathcal{P}}$ and $n \in \mathbb{N}$, the \emph{$n$-vertex realization} of $\bm{x}$, denoted $G_{n,\bm{x}}$, is the complete multipartite graph on the vertex set $[n]$ with parts $V_1, \ldots, V_m$ (for some $m$) and a set $V_0$ of universal vertices (i.e., the collection of singleton parts of $G_{n,\bm{x}}$), obtained as follows: if $x_0 = 0$, then take a partition $[n] = V_1 \cup \cdots \cup V_m$ such that $||V_i| - x_i n| < 1$ for each $i$, and set $V_0 = \varnothing$; otherwise, for all $i \ge 1$ with $x_i n \ge 2$, set $|V_i| = \lfloor x_i n \rfloor \ge 2$, and let $V_0$ contain the remaining vertices in $[n]$.  
    We call $V_0, \ldots, V_m$ the \emph{$\overline{\mathcal{P}}$-structure} of $G_{n,\bm{x}}$.
\end{definition}

For a tuple $(d_1, \ldots, d_t)$ of positive integers and a sequence $(x_1, x_2, \ldots) \in \mathbb{R}^{\mathbb{N}_{+}}$ of real numbers, let 
\begin{align}\label{equ:def-S-polynomial}
    S_{d_1, \ldots, d_t}(x_1, x_2, \ldots)
    \coloneqq \sum_{(i_1, \ldots, i_t) \in (\mathbb{N}_{+})_{t}} x_{i_1}^{d_1} \cdots x_{i_t}^{d_t}. 
\end{align}
For convenience, we set $S_{\varnothing}(x_1, x_2, \ldots) \coloneqq 1$. 

For two graphs $F$ and $G$, let $p(F, G) \coloneqq I(F, G) / \binom{v(G)}{v(F)}$ denote the \emph{induced density} of $F$ in $G$.

Let $a_1 \ge \cdots \ge a_r \ge 1$ be integers, and let $F = K_{a_1, \ldots, a_r}$.  
Define the \emph{limit function} 
\begin{align*}
    p_F(\bm{x}) 
    \coloneqq \lim_{n \to \infty} p(F, G_{n,\bm{x}}).
\end{align*}
It follows from Fact~\ref{FACT:Directly-Computation} that (recall the definition of $\kappa_{F}$ from~\eqref{equ:def-kappaF})
\begin{align}\label{equ:pF-general}
    p_F(\bm{x}) 
    = \kappa_F \cdot \sum_{i = 0}^{\sc(F)} \binom{\sc(F)}{i} x_0^i \cdot S_{a_1, \ldots, a_{r-i}}(\bm{x}).
\end{align}
In particular, if $\sc(F) = 0$ or $x_0=0$, this expression simplifies to 
\begin{align}\label{equ:pF-x0-zero}
    p_F(\bm{x}) 
    = \kappa_F \cdot S_{a_1, \ldots, a_r}(\bm{x}).
\end{align}

Intuitively, Lemma~\ref{Lemma:Sidorenko-Extremal-Partite} and Fact~\ref{FACT:Directly-Computation} together indicate that determining $i(F)$ for a complete multipartite graph amounts to finding the maximum of $p_F(\bm{x})$ over $\overline{\mathcal{P}}$.
A rigorous proof is given in~\cite[Section~2]{LiuPikhurko23}.

\begin{theorem}[\cite{LiuPikhurko23}]\label{THM:LiuPikhurko-Reduced-to-OPT}
    Suppose that $F$ is a complete multipartite graph. 
    Then 
    \begin{align*}
        i(F)
        = \max_{\bm{x}\in \overline{\mathcal{P}}}~p_{F}(\bm{x}).
    \end{align*}
\end{theorem}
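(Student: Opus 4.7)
My plan is to prove the equality by establishing both inequalities, with attainment of the maximum emerging as a byproduct. The entry point is Lemma~\ref{Lemma:Sidorenko-Extremal-Partite} (which reduces the problem to complete multipartite graphs), and the main structural tool is compactness of $\overline{\mathcal{P}}$ in the product topology on $[0,1]^{\mathbb{N}_{+}}$: the defining constraints $x_1 \ge x_2 \ge \cdots \ge 0$ and $\sum_i x_i \le 1$ are both closed under coordinatewise limits, so $\overline{\mathcal{P}}$ is closed in $[0,1]^{\mathbb{N}_{+}}$ and therefore compact by Tychonoff's theorem.

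For the lower bound $i(F) \ge p_F(\bm{x})$, I fix any $\bm{x} \in \overline{\mathcal{P}}$ and analyze the realizations $G_{n,\bm{x}}$ from Definition~\ref{DEF:realization}. Applying Fact~\ref{FACT:Directly-Computation} writes $I(F, G_{n,\bm{x}})$ as a finite sum of products of binomial coefficients in its part sizes; combining the elementary estimate $\binom{b}{a} = b^{a}/a! + O(b^{a-1})$ with $\sc(G_{n,\bm{x}}) = x_0 n + O(1)$ gives $p(F, G_{n,\bm{x}}) \to p_F(\bm{x})$ as $n \to \infty$ (matching~\eqref{equ:pF-general} in the limit). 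Since $p(F, G_{n,\bm{x}}) \le I(F,n)/\binom{n}{v(F)}$, passing to the limit yields $p_F(\bm{x}) \le i(F)$, hence $\sup_{\bm{x} \in \overline{\mathcal{P}}} p_F(\bm{x}) \le i(F)$.

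For the reverse inequality together with attainment, I apply Lemma~\ref{Lemma:Sidorenko-Extremal-Partite} to pick, for each $n$, a complete multipartite extremizer $G_n$ on $n$ vertices with $I(F, G_n) = I(F, n)$. Listing the non-singleton part sizes of $G_n$ in decreasing order as $b_1^{(n)} \ge b_2^{(n)} \ge \cdots$, setting $x_i^{(n)} \coloneqq b_i^{(n)}/n$, and recording $x_0^{(n)} \coloneqq \sc(G_n)/n$ produces a sequence $\bm{x}^{(n)} \in \overline{\mathcal{P}}$. Compactness then supplies a subsequence $\bm{x}^{(n_j)} \to \bm{x}^{*} \in \overline{\mathcal{P}}$ coordinatewise. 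The goal is to conclude $p(F, G_{n_j}) \to p_F(\bm{x}^{*})$, which simultaneously gives $p_F(\bm{x}^{*}) \ge i(F)$ and exhibits $\bm{x}^{*}$ as a maximizer.

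The main obstacle is this last convergence, since the series $S_{a_1,\ldots,a_r}(\bm{x})$ from~\eqref{equ:def-S-polynomial} has infinitely many terms and coordinatewise convergence need not commute with infinite summation; the difficulty is sharpest when $\sc(F) > 0$, since then some exponents in~\eqref{equ:pF-general} equal $1$ and ``mass'' may escape to infinity. I would address this by exploiting the ordering $x_1 \ge x_2 \ge \cdots$ together with $\sum_i x_i \le 1$, which forces $x_i \le 1/i$ uniformly along the sequence, so each monomial $x_{i_1}^{a_1} \cdots x_{i_{r-i}}^{a_{r-i}}$ is majorized by $\prod_j i_j^{-a_j}$. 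Since we may assume $F$ is not complete, every $S$-term appearing in~\eqref{equ:pF-general} contains at least $r - \sc(F) \ge 1$ exponents that are $\ge 2$, which renders the tails absolutely summable uniformly in $\bm{x}^{(n)}$. A dominated-convergence argument then delivers $p_F(\bm{x}^{(n_j)}) \to p_F(\bm{x}^{*})$ (upper semicontinuity of $p_F$ would in fact suffice), and combined with the lower-bound direction this concludes $i(F) = p_F(\bm{x}^{*}) = \max_{\bm{x} \in \overline{\mathcal{P}}} p_F(\bm{x})$.
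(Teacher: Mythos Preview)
The paper does not prove this statement itself; it is quoted from~\cite{LiuPikhurko23} with the remark that ``A rigorous proof is given in~\cite[Section~2]{LiuPikhurko23}.'' So there is no in-paper argument to compare against. Your overall plan---realizations $G_{n,\bm{x}}$ for the lower bound, then Lemma~\ref{Lemma:Sidorenko-Extremal-Partite} plus compactness of $\overline{\mathcal{P}}$ plus continuity of $p_F$ for the upper bound and attainment---is the natural one and is correct in outline.

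The gap is in your dominated-convergence step when $\sc(F) > 0$. Your majorant $\prod_j i_j^{-a_j}$ is \emph{not} summable once some exponent equals $1$: already for $S_{2,1}$ one gets $\sum_{i \ne j} i^{-2} j^{-1} = \infty$, so ``at least one exponent $\ge 2$'' does not make the tails uniformly small. In fact the individual sums $S_{a_1,\ldots,a_s}$ containing a degree-$1$ factor are \emph{not} continuous under coordinatewise convergence on $\overline{\mathcal{P}}$: take $\bm{x}^{(n)} = (\tfrac12, \tfrac{1}{2n},\ldots,\tfrac{1}{2n},0,\ldots)$ with $n$ small entries, so that $\bm{x}^{(n)} \to (\tfrac12,0,\ldots)$ coordinatewise but $S_{2,1}(\bm{x}^{(n)}) \to \tfrac18 \ne 0 = S_{2,1}(\tfrac12,0,\ldots)$. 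What is true, and what makes the theorem work, is that the \emph{full} combination~\eqref{equ:pF-general} is continuous: the mass that escapes from the $x_i$'s is absorbed into $x_0^{\ast} = 1 - \sum_i x_i^{\ast}$, and the coupling through the $x_0^i$ prefactors is arranged precisely so that these contributions match in the limit (equivalently, in graphon language, infinitely many infinitesimal parts are indistinguishable from the singleton block $V_0$). Proving this cancellation is the real content of the continuity claim and is not supplied by your sketch; your parenthetical appeal to upper semicontinuity does not help, since establishing USC here requires the same analysis.
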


For perfect stability, it is also necessary to understand the property of the maximizers of $p_F(\bm{x})$ over $\overline{\mathcal{P}}$.
Thus, for a complete multipartite graph $F$, we define 
% the set of \emph{optimal distributions} by
\begin{align*}
    \OPT(F) 
    \coloneqq \left\{ \bm{x} \in \overline{\mathcal{P}} \colon p_F(\bm{x}) = i(F) \right\}.
\end{align*}
So for every $\bm{x} \in \OPT(F)$, the graph $G_{n,\bm{x}}$ is asymptotically extremal for the inducibility problem $I(F,n)$.

It follows relatively straightforwardly from Lemma~\ref{Lemma:Sidorenko-Extremal-Partite-H-free} and standard results from the theory of graphons (see~\cite[Section~1]{Yuster26} for further details) that, for a complete multipartite graph $F = K_{a_1,\ldots,a_r}$, we have
\begin{align}\label{equ:K-free-inducibility-reduction}
    i_{k+1}(F)
    = \max_{\bm{x} \in \mathbb{S}^{k-1}} p_F(\bm{x})
    = \kappa_F \cdot \max_{\bm{x} \in \mathbb{S}^{k-1}} S_{a_1, \ldots, a_{r}}(\bm{x}), 
\end{align}
where 
\begin{align*}
    \mathbb{S}^{k-1}
    \coloneqq \left\{(x_1, \ldots, x_{k}, 0, \ldots) \in \mathbb{R}^{\mathbb{N}_{+}} \colon \text{$x_i \ge 0$ for $i \in [k]$ and $x_1 + \cdots + x_{k} = 1$}\right\}. 
\end{align*}
Observe that $\mathbb{S}^{k-1} \subseteq \overline{\mathcal{P}}$ and the union $\bigcup_{k \ge 1} \mathbb{S}^{k-1}$ is the collection of sequences $\bm{x} \in \overline{\mathcal{P}}$ with $x_0 = 0$. 
Define 
\begin{align*}
    \OPT_{k+1}(F)
    \coloneqq \left\{ \bm{x} \in \mathbb{S}^{k-1} \colon p_F(\bm{x}) = i_{k+1}(F) \right\}.
\end{align*}

For convenience, set $\mathbb{S}^{\infty}=\bigcup_{k \ge 1} \mathbb{S}^{k-1}$, $\OPT_{\infty}(F) = \OPT(F)$, and $i_{\infty}(F) = i(F)$. 
For every $k \in \mathbb{N}_{+}$, define 
\begin{align*}
    \bm{k} \coloneqq \big(\underbrace{1/k,\, \ldots,\, 1/k}_{k},\, 0,\, \ldots\big).
\end{align*}
We call $\bm{x} = (x_1, x_2, \ldots) \in \overline{\mathcal{P}}$ \textit{balanced} if $x_i = x_j$ for all $i, j \in \supp(\bm{x})$; otherwise it is \textit{unbalanced}. 
In particular, if such a balanced $\bm{x}$ belongs to $\mathbb{S}^{k-1}$, then necessarily $\bm{x} = \bm{t}$ for some integer $t \le k$.

The following simple property of almost balanced complete multipartite graphs will be used multiple times. 
\begin{fact}\label{FACT:almost-balanced-sizes}
    Let $F = K_{a_1, \ldots, a_{r}}$ be an almost balanced complete $r$-partite graph with $a_1 \ge \cdots \ge a_{r}$, and let $\ell \coloneqq a_1 + \cdots + a_{r}$. 
    \begin{enumerate}[label=(\roman*)]
        \item\label{FACT:almost-balanced-sizes-1} If $\ell \in [r+1, 2r-1]$, then $a_{1} = \cdots = a_{\ell-r} = 2$ and $a_{\ell-r+1} = \cdots = a_{r} = 1$.
        % \begin{align*}
        %     a_{1} = \cdots = a_{\ell-r} = 2
        %     \quad\text{and}\quad 
        %     a_{\ell-r+1} = \cdots = a_{r} = 1. 
        % \end{align*}
        \item\label{FACT:almost-balanced-sizes-2} If $\ell \ge 2r$, then $a_{r} \ge 2$. 
    \end{enumerate}
\end{fact}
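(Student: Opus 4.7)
The plan is to extract from the defining inequality $\binom{a_1 - a_r}{2} < a_r$ a tight bound on the gap $a_1 - a_r$, and then combine it with the monotonicity $a_1 \ge \cdots \ge a_r$ and with the range of $\ell$ assumed in each item. The key observation is that whenever $a_r = 1$, the inequality reduces to $\binom{a_1 - 1}{2} < 1$, which immediately forces $a_1 \le 2$, and this in turn pins $\ell$ down to a narrow range.

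For part~\ref{FACT:almost-balanced-sizes-1}, I would first note that $a_r = 1$ is forced: otherwise $a_r \ge 2$ and $\ell = a_1 + \cdots + a_r \ge 2r$, contradicting $\ell \le 2r-1$. Substituting $a_r = 1$ into the defining inequality yields $\binom{a_1 - 1}{2} < 1$, so $a_1 \in \{1,2\}$; the hypothesis $\ell \ge r+1$ (which excludes the empty graph on $r$ vertices) rules out $a_1 = 1$, hence $a_1 = 2$. Thus every $a_i$ lies in $\{1,2\}$; writing $s$ for the number of indices with $a_i = 2$, the identity $\ell = 2s + (r-s) = r + s$ gives $s = \ell - r$, and the non-increasing order places all the $2$'s first, yielding the claimed structure.

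For part~\ref{FACT:almost-balanced-sizes-2}, I would proceed by contradiction and assume $a_r = 1$. The same derivation as above forces $a_i \in \{1,2\}$ for every $i$, so $\ell \le 2r$; moreover $\ell = 2r$ would require every $a_i = 2$, contradicting $a_r = 1$. Hence $\ell \le 2r - 1$, which contradicts $\ell \ge 2r$. Therefore $a_r \ge 2$, as desired.

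Both items reduce to elementary arithmetic, and I do not expect any genuine obstacle. The only point that needs care is invoking $\ell \ge r+1$ at exactly the right moment in part~\ref{FACT:almost-balanced-sizes-1} to exclude the otherwise possible degenerate solution $a_1 = 1$.
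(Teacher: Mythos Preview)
Your proposal is correct and follows essentially the same argument as the paper: both deduce $a_r = 1$ from $\ell \le 2r-1$, then use $\binom{a_1-1}{2} < 1$ to force $a_1 \le 2$, and for part~\ref{FACT:almost-balanced-sizes-2} both contrapose to reach $\ell \le 2r-1$. Your version is slightly more explicit in counting the parts of size $2$ and in separately excluding $\ell = 2r$, but the core reasoning is identical.
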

\begin{proof}
    Suppose that $\ell \in [r+1, 2r-1]$. 
    By the Pigeonhole Principle, we have $a_r = 1$. 
    By the definition of almost balanced (see~\eqref{equ:def-almost-balance}), we have $\binom{a_1 - 1}{2} < 1$, which implies that $a_1 \le 2$. 
    Therefore, 
    \begin{align*}
        a_1 = \cdots = a_{\ell-r} = 2 
        \quad\text{and}\quad 
        a_{\ell-r+1} = \cdots = a_r = 1. 
    \end{align*}
    Now suppose that $\ell \ge 2r$. If $a_r = 1$, then the definition of almost balanced implies that $\binom{a_1 - 1}{2} < 1$, which forces $a_1 \le 2$. 
    Consequently, $\ell \le 2r-1$, which is a contradiction. 
\end{proof}

%%%%%%%%%%%%%%%%%%%%%%%%%%%%%%%%%%%%%
\section{Asymptotic results and uniqueness of the maximizer}\label{SEC:asymptotic}
In this section, we prove the following theorem, which constitutes the most crucial part of the proofs in this work.
In particular, it implies Theorem~\ref{THM:K-free-almost-balanced-asymptotic} and the asymptotic part of Theorem~\ref{THM:almost-balanced-exact}.

\begin{theorem}\label{THM:almost-balanced-asymptotic}
    Let $\ell > r \ge 2$ be integers, and let $m=m_{r,\ell}$. 
    Suppose that $F$ is an almost balanced complete $r$-partite graph on $\ell$ vertices. 
    Then 
    \begin{align*}
        \OPT_{k+1}(F)
        = 
        \begin{cases}
            \{ \bm{k} \}, & \quad\text{if}\quad k \in [r,~m-1], \\[.2em]
            \{ \bm{m} \}, & \quad\text{if}\quad k \in [m,~\infty]. 
        \end{cases}
    \end{align*}
    In particular, 
    \begin{align*}
        i_{k+1}(F)
        = 
        \begin{cases}
            \kappa_F \cdot \frac{(k-1)_{r-1}}{{k}^{\ell-1}}, & \quad\text{if}\quad k \in [r,~m-1], \\[.3em]
            \kappa_F \cdot \frac{(m-1)_{r-1}}{m^{\ell-1}}, & \quad\text{if}\quad k \in [m,~\infty]. 
        \end{cases}
    \end{align*}
\end{theorem}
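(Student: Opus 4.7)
My plan is to reduce the theorem to a polynomial optimization problem and then analyze its critical points. For finite $k$, Lemma~\ref{Lemma:Sidorenko-Extremal-Partite-H-free} together with the identity~\eqref{equ:K-free-inducibility-reduction} reduces the determination of $i_{k+1}(F)$ and $\OPT_{k+1}(F)$ to maximizing the symmetric polynomial $p_F$ over the simplex $\mathbb{S}^{k-1}$. For $k = \infty$, Theorem~\ref{THM:LiuPikhurko-Reduced-to-OPT} reduces the problem to a maximization over $\overline{\mathcal{P}}$; a short additional argument, pushing any positive $x_0$ mass into a nonempty part to strictly increase $p_F$, shows that the optimum satisfies $x_0 = 0$, so the problem reduces to $\mathbb{S}^\infty$ as well. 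A direct calculation using~\eqref{equ:pF-x0-zero} yields $p_F(\bm{t}) = \kappa_F (t)_r / t^\ell = \kappa_F f(t)$ for every balanced vector $\bm{t}$. Using Lemma~\ref{Lemma:Unique-Maximizer}, together with the accompanying unimodality of $f$ (strictly increasing on integers in $[r, m]$, strictly decreasing on $[m, \infty)$), the value $p_F(\bm{t})$ taken over balanced vectors with $r \le t \le k$ is uniquely maximized at $t = \min(k, m)$. Thus the theorem follows once we prove that every element of $\OPT_{k+1}(F)$ is (up to permutation of coordinates) of the form $\bm{t}$.

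For this central step, my approach is via Lagrange multipliers: any maximizer $\bm{x}^*$ supported on $T \subseteq [k]$ satisfies $\partial p_F / \partial x_i = \lambda$ for all $i \in T$, together with the constraint $\sum_{i \in T} x_i = 1$. The goal is to show, under the almost-balanced hypothesis on $F$, that the only such $\bm{x}^*$ lying in $\OPT_{k+1}(F)$ has $x_i = 1/|T|$ for every $i \in T$. Following the outline in Section~\ref{SEC:asymptotic}, the analysis splits into two cases according to Fact~\ref{FACT:almost-balanced-sizes}. When $\ell \le 2r - 1$, each $a_i \in \{1, 2\}$, so $p_F$ is biquadratic in each variable and the Lagrangian system becomes a system of low-degree polynomial equations whose solutions can be classified directly (Subsection~\ref{SUBSEC:proof-asymptotic-small}). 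When $\ell \ge 2r$, each $a_i \ge 2$ and the system is substantially more involved; the almost-balanced condition $\binom{a_1 - a_r}{2} < a_r$ is used sharply to bound the admissible spread between coordinates of a non-balanced critical point (Subsection~\ref{SUBSEC:proof-asymptotic-large}).

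The main obstacle I anticipate is the case $\ell \ge 2r$. Non-balanced interior critical points of $p_F$ genuinely exist and can even exceed $p_F(\bm{t})$ at the balanced centroid of the same face: for $F = K_{2,2}$ (so $r = 2$, $\ell = 4$, $m = 2$), the vector $(1/5, 1/5, 1/5, 2/5) \in \mathbb{S}^{3}$ is an interior critical point whose $p_F$-value strictly exceeds $p_F(\bm{4})$, and only the boundary vector $\bm{2} = \bm{m}$ dominates both. Thus the almost-balanced hypothesis cannot be used to exclude non-balanced critical points outright; rather it must be leveraged to prove that no such critical point on any face of $\mathbb{S}^{k-1}$ can beat $p_F(\bm{s})$ for $s = \min(k, m)$. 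I expect the decisive ingredient to be a careful polynomial inequality comparing $p_F$ at an arbitrary non-balanced critical point with $f(s)$, with $\binom{a_1 - a_r}{2} < a_r$ providing precisely the quantitative margin needed to close the bound.
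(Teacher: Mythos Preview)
Your reduction to the simplex optimization and the case split match the paper, but two points diverge from it and one of them is a genuine gap.

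\textbf{The $x_0>0$ step fails when $\ell\le 2r-1$.} Your ``short additional argument, pushing any positive $x_0$ mass into a nonempty part to strictly increase $p_F$'' is valid only when $a_r\ge 2$ (i.e.\ $\ell\ge 2r$), because then no induced copy of $F$ uses a singleton part and \eqref{equ:pF-x0-zero} applies directly; this is exactly Claim~\ref{CLAIM:large-opt-x0} in the paper. But when $\ell\le 2r-1$ we have $a_r=1$, the $x_0$ mass \emph{does} contribute to $p_F$ via the mixed terms in \eqref{equ:pF-general}, and merging it into an existing part can \emph{decrease} $p_F$. The paper needs a genuinely different argument here (Claims~\ref{Claim:OPT-all-equal-b<=2}--\ref{CLAIM:OPT-x0-zero}): it first proves a strengthened inequality with two penalty terms (Claim~\ref{Claim:b<=2-similar-stability}), then shows that splitting the $x_0$ mass into $k$ equal new parts and sending $k\to\infty$ forces those penalties to vanish, which in turn pins down $x_0=0$. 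This is not short, and your proposal does not cover it.

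\textbf{The approach for $\ell\ge 2r$ is different from the paper's, and your key step is not yet a plan.} The paper does \emph{not} use Lagrange multipliers or attempt to classify critical points. Instead, for any unbalanced $\bm{x}$ it picks two unequal coordinates $x_i\neq x_j$ and shows that one of two specific moves---merging $(x_i,x_j)\mapsto(x_i+x_j,0)$ or averaging $(x_i,x_j)\mapsto\bigl(\tfrac{x_i+x_j}{2},\tfrac{x_i+x_j}{2}\bigr)$---strictly increases $S_{\bm a}$ (Claim~\ref{CLAIM:large-shifting-increase}). The entire weight of the argument is carried by a single two-variable inequality, Proposition~\ref{PROP:a-b-ineq}, which compares the ratios $\omega_t/\mu_t$ and $\phi_{s,t}/\psi_{s,t}$; the almost-balanced hypothesis $\binom{a_1-a_r}{2}<a_r$ enters precisely in the proof of \eqref{eq:Sidorenko}. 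This two-point symmetrization sidesteps your obstacle entirely: it never needs to evaluate $p_F$ at a non-balanced critical point or compare such a value to $f(s)$. Your proposed route---classify Lagrange critical points and then bound each by $f(\min(k,m))$---may be feasible, but ``a careful polynomial inequality'' is not yet a mechanism, and your own $K_{2,2}$ example shows that such critical points can dominate the balanced point on the same face, so any comparison must be global across faces. If you pursue Lagrange multipliers, you would still need an inequality of roughly the same strength as Proposition~\ref{PROP:a-b-ineq}; the paper's formulation has the advantage of being purely local in two coordinates.
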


In the next subsection, we establish some preliminary inequalities.
We then present the proof of Theorem~\ref{THM:almost-balanced-asymptotic} in two cases according to the size of $\ell$: $\ell \le 2r-1$ and $\ell \ge 2r$.

We remark that the proof of the case $\ell \le 2r-1$ in Theorem~\ref{THM:almost-balanced-asymptotic} is similar to that of~\cite[Theorem~1.6]{Yuster26} (which works for $\ell\leq 3r+1$), with one difference: for $k = \infty$, Yuster proved the uniqueness of the maximizer of~\eqref{equ:pF-x0-zero}, whereas we prove the uniqueness of the maximizer of~\eqref{equ:pF-general} (slightly stronger but necessary for perfect stability).
In contrast, our proof for the case $\ell \ge 2r$ differs substantially from Yuster’s. 
It relies on a quite nontrivial inequality (Proposition~\ref{PROP:a-b-ineq}), whereas Yuster’s argument is based on properties of low-degree polynomials.

%%%%%%%%%%%%%%%%%%%%%%%%%%%%%%%%%%%%%
\subsection{Basic properties of $f$ in~\eqref{equ:def-fk} and the integer $m_{r,\ell}$}
Let $h \colon I \to \mathbb{R}$ be a function on $I \subseteq \mathbb{R}$. 
We say that $h$ is \emph{unimodal} if there exists a point $x_0 \in I$ such that 
\begin{itemize}
    \item $h$ is non-decreasing on $I \cap (-\infty, x_0]$, and 
    \item $h$ is non-increasing on $I \cap (x_0, \infty)$. 
\end{itemize}

\begin{lemma}\label{Lemma:Unique-Maximizer}
    Let $\ell > r \ge 2$ be integers. The discrete function $f \colon [r,\infty) \to \mathbb{R}$, defined as in~\eqref{equ:def-fk}, is unimodal and attains its maximum at a unique integer $m_{r,\ell} \in [r,\infty)$.
\end{lemma}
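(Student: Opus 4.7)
The plan is to analyze the consecutive ratio $\rho(k) \coloneqq f(k+1)/f(k)$ and locate the sign change of $\rho(k) - 1$. Using $(k)_{r-1}/(k-1)_{r-1} = k/(k-r+1)$, one computes
\[
    \rho(k) = \frac{k}{k-r+1}\left(\frac{k}{k+1}\right)^{\ell-1};
\]
I would then work with the reciprocal $h(k) \coloneqq 1/\rho(k) = (k-r+1)(k+1)^{\ell-1}/k^{\ell}$, under which $h(k) > 1$ is equivalent to $f(k+1) < f(k)$. Unimodality of $f$ amounts to showing that $h(k) - 1$ flips sign from negative to positive exactly once as $k$ runs over integers in $[r, \infty)$, and uniqueness of the integer maximizer amounts to ruling out the equality $h(k) = 1$ at any integer $k$.

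For the sign analysis, treat $h$ as a smooth function on $[r, \infty)$. The logarithmic derivative $(\log h)'(k)$ simplifies to a rational function with positive denominator and linear numerator $\ell(r-1) - (\ell-r)k$, which has a unique root $k^* \coloneqq \ell(r-1)/(\ell-r)$. So $h$ is strictly unimodal on $[r, \infty)$: it increases up to $\max(r, k^*)$ and then strictly decreases. A short asymptotic expansion gives $h(k) = 1 + (\ell-r)/k + O(1/k^2) \to 1^{+}$, so $h$ stays strictly above $1$ on its decreasing branch. Since $f(k) \to 0$ while $f(r) > 0$, $f$ is not monotone non-decreasing, and combining these facts the set $\{k \in [r, \infty) : h(k) \leq 1\}$ is an initial interval (possibly empty) with $h > 1$ on its complement. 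Defining $m_{r,\ell}$ as the smallest integer $k \geq r$ with $h(k) > 1$, I conclude that $f$ is non-decreasing on $\{r, \ldots, m_{r,\ell}\}$ and strictly decreasing on $\{m_{r,\ell}, m_{r,\ell}+1, \ldots\}$, which gives the desired unimodality.

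To promote the weak inequalities on the left of $m_{r,\ell}$ to strict ones (and hence secure uniqueness), I would rule out $h(m) = 1$ at any integer $m \geq r$ via a short number-theoretic argument. Rearranging, $h(m) = 1$ becomes $m^{\ell} = (m-r+1)(m+1)^{\ell-1}$; since $\gcd(m, m+1) = 1$, the factor $(m+1)^{\ell-1}$ must divide $m^{\ell}$, forcing $(m+1)^{\ell-1} = 1$ and hence $m = 0$, which contradicts $m \geq r \geq 2$. I expect the main care to lie in the derivative-plus-asymptotic analysis that pins down the qualitative shape of $h$; everything else is routine, and the gcd step is the sole place where integrality of $k$ is used.
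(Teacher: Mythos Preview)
Your proof is correct. It differs from the paper's in organization, though the number-theoretic uniqueness step rests on the same coprimality of $m$ and $m+1$. The paper differentiates the real extension $\tilde f(x)=(x-1)_{r-1}/x^{\ell-1}$ directly and observes that $\tilde f'(x)$ has the sign of $g(x)=\sum_{i=1}^{r-1}\frac{x}{x-i}-(\ell-1)$, which is strictly decreasing on $[r,\infty)$ with negative limit; this yields unimodality of $\tilde f$ in one stroke. You instead analyze the consecutive ratio $h(k)=f(k)/f(k+1)$, show that $h$ is itself unimodal via its logarithmic derivative, and then use the asymptotic $h(k)\to 1^{+}$ to pin down the sign pattern of $h-1$. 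Your route needs two ingredients (the shape of $h$ \emph{and} its limit) where the paper needs one (monotonicity of $g$), but it has the virtue that the uniqueness step becomes a clean divisibility statement $(m+1)^{\ell-1}\mid m^{\ell}$, immediately forcing $m=0$; the paper instead argues that $f(k_0)$ and $f(k_0+1)$, written in lowest terms, have coprime denominators both exceeding~$1$ and hence cannot coincide.
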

\begin{proof}
    Consider the real extension $\tilde{f} \colon [r, \infty) \to \mathbb{R}$ of $f$ defined by 
    \[
        \tilde{f}(x) = \frac{(x-1)\cdots(x-r+1)}{x^{\ell-1}},  
        \quad\text{for every real number $x\in [r, \infty)$}.
    \]
    Straightforward calculations show that the derivative of $\tilde{f}(x)$ is 
    \[
        \tilde{f}'(x)
        = \frac{\tilde{f}(x)}{x}
          \left( \frac{x}{x-1} + \cdots + \frac{x}{x-r+1} - \ell + 1 \right).
    \]
    Define
    \begin{align}\label{equ:def-gx}
        g(x) \coloneqq \sum_{i=1}^{r-1} \frac{x}{x-i} - \ell + 1, 
        \quad\text{for every real number $x\in [r, \infty)$}.
    \end{align}
    Observe that $g(x)$ is strictly decreasing on $[r,\infty)$ and satisfies
    $\lim_{x\to \infty}g(x) = r - \ell < 0$. 
    Thus, the function $\tilde{f}$ is not monotone increasing and is unimodal.
    It follows that the discrete function $f$ can have at most two maximizers on the set of integers $\{r, r+1, \ldots\}$, and if there are two, they must be consecutive integers. 

    Suppose, for contradiction, that $f$ attains its maximum at two consecutive integers $k_0$ and $k_0+1$. 
    Note that, for every $k \in \{r, r+1, \ldots\}$, we have $f(k) < 1$, in particular, $f(k)$ is not an integer.
    Moreover, when written in lowest terms, the denominator of $f(k_0)$ divides $k_0^{\ell-1}$, while the denominator of $f(k_0+1)$ divides $(k_0+1)^{\ell-1}$. Since $k_0$ and $k_0+1$ are coprime, $k_0^{\ell-1}$ and $(k_0+1)^{\ell-1}$ are also coprime, and consequently the denominators of $f(k_0)$ and $f(k_0+1)$ are coprime (and both greater than $1$, since both $f(k_0)$ and $f(k_0+1)$ are not integers). Hence, $f(k_0)\neq f(k_0+1)$, a contradiction. Therefore, the maximizer of $f$ on $\{r,r+1,\ldots\}$ is unique.
\end{proof}

It follows from Lemma~\ref{Lemma:Unique-Maximizer} that $m_{r,\ell} = r$ if and only if $f(r) > f(r+1)$, which is equivalent to
\begin{align*}
    \ell > \frac{\ln(r+1)}{\ln(1+1/r)} = (1+o(1)) r \ln r.
\end{align*}
This is consistent with the result of Brown--Sidorenko~\cite[Theorem~9]{Brown94}. 
Refined bounds on $m_{r,\ell}$ will be provided in Lemma~\ref{Lemma:m-bounds}. 

%%%%%%%%%%%%%%%%%%%%%%%%%%%%%%%%%%%%%
\subsection{Proof of Theorem~\ref{THM:almost-balanced-asymptotic} for $\ell < 2r$}\label{SUBSEC:proof-asymptotic-small}
%
% \begin{theorem}\label{THM:almost-balanced-asymptotic-small}
%     Theorem~\ref{THM:almost-balanced-asymptotic} holds for $\ell < 2r$.
% \end{theorem}
In this subsection we present the proof of Theorem~\ref{THM:almost-balanced-asymptotic} for $\ell < 2r$.

\begin{proof}[\bf Proof of Theorem~\ref{THM:almost-balanced-asymptotic} for $\ell < 2r$]
    Let $F = K_{a_1, \ldots, a_{r}}$ be an almost balanced complete $r$-partite graph on $\ell \in [r+1,~2r-1]$ vertices (i.e. $\ell = a_1 + \cdots + a_r$), and assume that $a_1 \ge \cdots \ge a_{r}$. 
    Let $m = m_{r,\ell}$ be the integer give by Lemma~\ref{Lemma:Unique-Maximizer}. 
    By Fact~\ref{FACT:almost-balanced-sizes}~\ref{FACT:almost-balanced-sizes-1}, we have $a_1 = \cdots = a_{\ell-r} = 2$ and $a_{\ell-r+1} = \cdots = a_r = 1$. 
    % \begin{align*}
    %     a_1 = \cdots = a_{\ell-r} = 2 
    %     \quad\text{and}\quad 
    %     a_{\ell-r+1} = \cdots = a_r = 1. 
    % \end{align*}

    \textbf{Case 1}: $k \in [m, \infty]$. 

    \medskip 

    Since $\mathbb{S}^{k-1} \subseteq \overline{\mathcal{P}}$, it suffices to prove that $\OPT(F) = \{\bm{m}\}$ for this case. 
    
    Recall from~\eqref{equ:def-S-polynomial} and~\eqref{equ:pF-general} that for every $\bm{x} = (x_1, x_2, \ldots) \in \overline{\mathcal{P}}$, 
    \begin{align}\label{equ:express-pF-small}
        p_F(\bm {x})
        = \kappa_F \cdot \sum_{j = 0}^{2 r-\ell} \tbinom{2 r-\ell}{j} x_0^j  \, \sum_{(i_1, \ldots, i_{r-j}) \in (\mathbb{N}_{+})_{r-j}} x_{i_1}^{a_1} \cdots x_{i_{r-j}}^{a_{r-j}}.
        % = \kappa_F \cdot \sum_{i = 0}^{2 r-\ell} \tbinom{2 r-\ell}{i} x_0^i \cdot S_{a_1, \ldots, a_{r - i}}(\bm{x}).
    \end{align}
    
    \begin{claim}\label{Claim:OPT-all-equal-b<=2}
        The following statements hold. 
        \begin{enumerate}[label=(\roman*)]
            \item\label{Claim:OPT-all-equal-b<=2-i} If $\ell \ge r+2$, then every maximizer $\bm{x} \in \OPT(F)$ is balanced.
            \item\label{Claim:OPT-all-equal-b<=2-ii} If $\ell = r+1$, then for every unbalanced maximizer $\bm{x} \in \OPT(F)$, there exists a balanced maximizer $\bm{y} \in \OPT(F)$ such that
            \[
                y_0 = x_0 \quad \text{and} \quad |\supp(\bm{y})| < |\supp(\bm{x})|.
            \]
        \end{enumerate}
    \end{claim}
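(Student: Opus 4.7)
The strategy is a smoothing argument applied to $p_F$ as given in~\eqref{equ:express-pF-small}. Suppose $\bm{x} \in \OPT(F)$ has two indices $\mu \ne \nu$ with $x_\mu \ne x_\nu$ and $x_\mu, x_\nu > 0$. Fix all coordinates of $\bm{x}$ other than $x_\mu, x_\nu$, and write $q \coloneqq x_\mu x_\nu$ and $s \coloneqq x_\mu + x_\nu > 0$; as $q$ varies over $[0, s^2/4]$, the value $q = 0$ corresponds to the swap $(x_\mu, x_\nu) \mapsto (s, 0)$ (support reduction) while $q = s^2/4$ corresponds to $(x_\mu, x_\nu) \mapsto (s/2, s/2)$ (balancing the pair). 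Because $p_F$ is symmetric in its variables and every index in~\eqref{equ:express-pF-small} appears with exponent at most $2$ (since the exponents $a_k$ lie in $\{1,2\}$), the restriction of $p_F$ depends on $x_\mu, x_\nu$ only through the pair $(s,q)$ and has degree at most $2$ in $q$; thus
\begin{align*}
    p_F(\bm{x}) \;=\; A + B\,q + G\,q^{2},
\end{align*}
where $A, B$ depend on $s$ and the remaining coordinates, and $G$ equals the coefficient of $x_\mu^{2} x_\nu^{2}$ in $p_F(\bm{x})$. Tracking the monomials of~\eqref{equ:express-pF-small} that contribute $x_\mu^{2} x_\nu^{2}$ -- namely those where $\mu$ and $\nu$ occupy two of the first $\ell - r$ positions of the ordered tuple (the positions carrying exponent $2$) -- yields the factorization $G = \kappa_F\,(\ell - r)(\ell - r - 1)\,Q_{\mu\nu}(\bm{x})$, where $Q_{\mu\nu}(\bm{x}) \ge 0$ is an explicit non-negative sum indexed by $j \in \{0, \ldots, 2r - \ell\}$ of products of the remaining coordinates.

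For Part~(i), $\ell \ge r + 2$ gives $(\ell - r)(\ell - r - 1) \ge 2 > 0$. A short check shows that the combinations of $|\supp(\bm{x})|$ and $x_0$ under which $\bm{x} \in \OPT(F)$ can satisfy $p_F(\bm{x}) > 0$ are exactly the combinations under which some summand of $Q_{\mu\nu}(\bm{x})$ is strictly positive, so $G > 0$. Hence $p_F$ is strictly convex in $q$, and its maximum on $[0, s^2/4]$ is attained only at an endpoint. But the current $q = x_\mu x_\nu$ lies in the open interval $(0, s^2/4)$, so moving $q$ toward whichever endpoint gives a larger value strictly increases $p_F$, contradicting $\bm{x} \in \OPT(F)$. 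Therefore no pair of distinct positive coordinates exists, i.e., $\bm{x}$ is balanced.

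For Part~(ii), $\ell = r + 1$ gives $G = 0$, so $p_F$ is linear in $q$. If the slope $B$ is non-zero, its maximum on $[0, s^2/4]$ is uniquely at an endpoint and the same contradiction applies; hence $B = 0$, $p_F$ is constant in $q$, and we may perform the swap $(x_\mu, x_\nu) \mapsto (s, 0)$ without changing $p_F(\bm{x})$. This produces a new maximizer $\bm{x}' \in \OPT(F)$ with $x_0' = x_0$ and $|\supp(\bm{x}')| = |\supp(\bm{x})| - 1$. If $\bm{x}'$ is still unbalanced, we iterate on any remaining pair of distinct positive coordinates; since the support strictly shrinks at each step, the process terminates after finitely many iterations at a balanced maximizer $\bm{y} \in \OPT(F)$ with $y_0 = x_0$ and $|\supp(\bm{y})| < |\supp(\bm{x})|$.

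The main technical hurdle is the explicit derivation of the factorization $G = \kappa_F\,(\ell-r)(\ell-r-1)\,Q_{\mu\nu}(\bm{x})$ and the verification that $Q_{\mu\nu}(\bm{x}) > 0$ in Part~(i); the latter amounts to observing that the $j$-th summand in $Q_{\mu\nu}(\bm{x})$ is strictly positive precisely when the $j$-th summand in~\eqref{equ:express-pF-small} is, so any maximizer with $p_F(\bm{x}) > 0$ automatically has $Q_{\mu\nu}(\bm{x}) > 0$. Once $G$ is pinned down, both parts reduce to elementary one-dimensional convex or linear optimization on the compact interval $[0, s^2/4]$.
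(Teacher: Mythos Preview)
Your argument is correct and essentially identical to the paper's: both fix two unequal positive coordinates, write $p_F$ as a quadratic $A+Bq+Gq^2$ in the product $q=x_\mu x_\nu$ (with $x_\mu+x_\nu$ held fixed), observe that $G>0$ precisely when $\ell\ge r+2$ (the paper calls this coefficient $K_5$), and use endpoint maximization on $[0,s^2/4]$. Your iteration in Part~(ii) is the paper's minimal-counterexample framing unwound, and where the paper simply asserts $K_5>0$ you supply the justification via the positivity of $Q_{\mu\nu}$.
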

    \begin{proof}
        Suppose to the contrary that this claim is not true (for both items). 
        Let $\bm{x}=(x_1,x_2,\ldots) \in \OPT(F)$ be a counterexample with minimum size of support. 
        Then there exists some distinct pair $i, j \in \supp(\bm{x})$ such that $x_i\neq x_j$. 
        
        Fix all $x_k$ where $k \in \mathbb{N} \setminus \{ i,j \}$. Note that since $\sum_{k=1}^{\infty} x_k = 1$, the sum $\beta \coloneqq x_i + x_j$ is also fixed. From~\eqref{equ:express-pF-small}, there exist constants $K_1, \ldots, K_5$ independent of $x_i$ and $x_j$ such that 
        \begin{align}\label{equ:pF-general-b<=2}
            p_F(\bm{x})
            & = (x_i+x_j)K_1 +({x_i}^2+{x_j}^2)K_2+x_ix_jK_3+x_ix_j(x_i+x_j)K_4+(x_ix_j)^2K_5 \notag \\
            & = (x_i+x_j)K_1 +\left((x_i+x_j)^2 - 2x_ix_j \right)K_2+x_ix_jK_3+x_ix_j(x_i+x_j)K_4+(x_ix_j)^2K_5.
        \end{align}
        
        Since $a_1 = 2$ and $a_{r} = 1$, we have $\min\{K_1, K_2, K_4\} > 0$. 
        Moreover, $K_3 > 0$ if $a_{r-1} = 1$ (equivalently, $\ell\le 2r-2$), and $K_{5} > 0$ if $a_{2} = 2$ (equivalently, $\ell\ge r+2$). 

        Let $\alpha \coloneqq x_i x_j$. 
        Then $p_F(\bm{x})$ can be rewritten as 
        \begin{align*}
            \psi(\alpha)
            \coloneqq p_F(\bm{x})
            & = \beta K_1 + (\beta^2 - 2\alpha)K_2 + \alpha K_3 + \alpha\beta K_4 + \alpha^2 K_5 \\
            & = K_5 \alpha^2 + (\beta K_4 + K_3 - 2K_2) \alpha + (\beta K_1 + \beta^2 K_2). 
        \end{align*}
        Now observe that, while $\beta$ is fixed, the product $\alpha = x_i x_j$ can vary continuously from $0$ to $\beta^2/4$ by adjusting $x_i$ and $x_j$ under the constraint $x_i + x_j = \beta$. 
        Thus we may view $\psi(\alpha)$ as a quadratic polynomial in $\alpha$.
        
        Let $\bm{y}$ and $\bm{z}$ be the sequences in $\overline{\mathcal{P}}$ obtained from $\bm{x}$ by replacing $(x_i,x_j)$ with $(x_i+x_j,0)$ and $\left(\frac{x_i+x_j}{2}, \frac{x_i+x_j}{2}\right)$, respectively, and then reordering the entries accordingly. 
        Note that $y_0 = z_0 = x_0$. 
        
        Suppose that $\ell \ge r+2$. 
        Then $K_5 > 0$. 
        So $\psi(\alpha)$ is a quadratic polynomial in $\alpha$, and hence 
        \begin{align*}
            p_F(\bm{x})
            = \psi(\alpha)
            < \max\left\{\psi(0),~\psi\left(\tfrac{(x_i+x_j)^2}{4}\right)\right\}
            = \max\left\{p_F(\bm{y}),~p_F(\bm{z})\right\}. 
        \end{align*}
        However, this is a contradiction to the maximality of $p_F(\bm{x})$. 
        This proves~\ref{Claim:OPT-all-equal-b<=2-i}. 

        Now suppose that $\ell = r+1$. 
        Then $K_5 = 0$. Hence, $\psi(\alpha)$ is linear in $\alpha$ with coefficient $\beta K_4 + K_3 - 2K_2$. 
        If $\beta K_4 + K_3 - 2K_2 \neq 0$, then similarly, we have 
        \begin{align*}
            p_F(\bm{x})
            = \psi(\alpha)
            < \max\left\{\psi(0),~\psi\left(\tfrac{(x_i+x_j)^2}{4}\right)\right\}
            = \max\left\{p_F(\bm{y}),~p_F(\bm{z})\right\}, 
        \end{align*}
        again a contradiction. 
        So it must be the case that $\beta K_4 + K_3 - 2K_2 = 0$. 
        It follows that 
        \begin{align*}
            p_F(\bm{x})
            = \psi(\alpha)
            = \psi(0)
            = p_F(\bm{y}). 
        \end{align*}
        This means that $\bm{y} \in \OPT(F)$ as well. 
        Since $\bm{x}$ does not satisfy Claim~\ref{Claim:OPT-all-equal-b<=2}~\ref{Claim:OPT-all-equal-b<=2-ii}, the maximizer $\bm{y}$ must be unbalanced.
        Since $|\supp(\bm{y})| < |\supp(\bm{x})|$, it follows from the minimality of $\bm{x}$  that $\bm{y}$ satisfies Claim~\ref{Claim:OPT-all-equal-b<=2}~\ref{Claim:OPT-all-equal-b<=2-ii}.
        That is, there exists a balanced maximizer $\bm{w} = (w_1,w_2,\ldots) \in \OPT(F)$ with $w_0 = y_0 = x_0$ and $|\supp(\bm{w})| < |\supp(\bm{y})| < |\supp(\bm{x})|$.
        However, $\bm{w}$ witnesses that $\bm{x}$ itself satisfies the assertion of Claim~\ref{Claim:OPT-all-equal-b<=2}~\ref{Claim:OPT-all-equal-b<=2-ii}, contradicting the assumption that $\bm{x}$ is a counterexample. This completes the proof of Claim~\ref{Claim:OPT-all-equal-b<=2}.
    \end{proof}%CLAIM

    Now we establish the following inequality (which is slightly stronger than what we need) for sequences $\bm{x}\in \overline{\mathcal{P}}$ with $x_0=0$. 
    Recall that for every $t \in \mathbb{N}_{+}$, $\bm{t} = (1/t, \ldots, 1/t, 0, \ldots)$. 
    \begin{claim}\label{Claim:b<=2-similar-stability}
        There exists a constant $\varepsilon=\varepsilon_{r,\ell}>0$, such that for all $\bm{x}\in \overline{\mathcal{P}}$ with $x_0=0$, we have 
        \begin{align*}
            \Phi(\bm{x})
            \coloneqq p_F(\bm{x})+\varepsilon \bigg( \sum_{(i,j) \in (\mathbb{N}_{+})_{2}} x_ix_j -\tfrac{m-1}{m}  \bigg)^2 + \varepsilon \bigg( \sum_{(i,j,k) \in (\mathbb{N}_{+})_{3}} x_ix_jx_k-\tfrac{(m-1)(m-2)}{m^2} \bigg)^2 
            \le \Phi(\bm{m}), 
        \end{align*}
        and the equality holds if and only if $\bm{x} = \bm{m}$, where $m = m_{r, \ell}$ is the constant given by Lemma~\ref{Lemma:Unique-Maximizer}.
    \end{claim}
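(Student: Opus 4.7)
The plan is to prove Claim~\ref{Claim:b<=2-similar-stability} in two stages: first show that any maximizer of $\Phi$ over $\overline{\mathcal{P}}_0 \coloneqq \{\bm{x} \in \overline{\mathcal{P}}: x_0 = 0\}$ must be balanced (of the form $\bm{t}$ for some $t \in \mathbb{N}_{+}$), and then identify $\bm{m}$ as the unique balanced maximizer by choosing $\varepsilon$ sufficiently small. This extends the quadratic-in-$\alpha$ framework already used in the proof of Claim~\ref{Claim:OPT-all-equal-b<=2}.

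\textbf{Step 1 (maximizers are balanced).} Fix two indices $i \ne j$ with $x_i, x_j > 0$, freeze all other coordinates, and vary $\alpha \coloneqq x_i x_j \in [0, \beta^2/4]$ while keeping $\beta \coloneqq x_i + x_j$ fixed. By~\eqref{equ:pF-general-b<=2}, $p_F$ is a polynomial in $\alpha$ of degree at most $2$, with leading coefficient $K_5$ that vanishes precisely when $\ell = r+1$. Setting $u \coloneqq p_2(\bm{x}) - 1/m$ and $v \coloneqq p_3(\bm{x}) - 1/m^2$, one computes $S_2 - \tfrac{m-1}{m} = -u$ and $S_3 - \tfrac{(m-1)(m-2)}{m^2} = -3u + 2v$, so
\begin{align*}
    T(\bm{x}) = u^2 + (-3u + 2v)^2 = 10u^2 - 12uv + 4v^2
\end{align*}
is a positive definite quadratic form in $(u,v)$. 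Under the variation, $u = -2\alpha + C_u$ and $v = -3\beta\alpha + C_v$ are both linear in $\alpha$, so the leading coefficient of $T$ as a quadratic in $\alpha$ equals
\begin{align*}
    40 - 72\beta + 36\beta^2 = 36(\beta - 1)^2 + 4 \ge 4
\end{align*}
uniformly over $\beta \in [0, 1]$. Therefore, for any $\varepsilon > 0$, the function $\Phi = p_F + \varepsilon T$ is strictly convex in $\alpha$. If $\bm{x}^*$ is a maximizer of $\Phi$ on $\overline{\mathcal{P}}_0$, then the one-variable function $\alpha \mapsto \Phi(\bm{x})$ attains its maximum on $[0, \beta^2/4]$ at a unique endpoint; since $\alpha = x_i^* x_j^* > 0$ rules out the endpoint $0$, we must have $\alpha = \beta^2/4$, i.e., $x_i^* = x_j^*$. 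Applied to every pair in $\supp(\bm{x}^*)$, this forces $\bm{x}^* = \bm{t}$ for some $t \in \mathbb{N}_{+}$.

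\textbf{Step 2 (choosing $\varepsilon$).} For balanced $\bm{t}$ one has $p_F(\bm{t}) = \kappa_F f(t)$, and $T(\bm{m}) = 0$, so $\Phi(\bm{m}) = \kappa_F f(m)$. By Lemma~\ref{Lemma:Unique-Maximizer}, $\kappa_F(f(m) - f(t)) > 0$ for every $t \ne m$, while $T(\bm{t}) > 0$ by positive definiteness (since $u = 1/t - 1/m \ne 0$). As $t \to \infty$, $f(t) \to 0$ and $T(\bm{t})$ converges to a finite positive limit, so
\begin{align*}
    \varepsilon^* \coloneqq \inf_{t \in \mathbb{N}_{+} \setminus \{m\}} \frac{\kappa_F(f(m) - f(t))}{T(\bm{t})} > 0.
\end{align*}
Choosing any $\varepsilon \in (0, \varepsilon^*)$ guarantees $\Phi(\bm{t}) < \Phi(\bm{m})$ for every $t \ne m$. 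Combined with Step 1, this proves the claim.

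The main obstacle will be ensuring the strict convexity of $\Phi$ in $\alpha$ \emph{uniformly} in the degenerate case $\ell = r+1$, where $p_F$ alone is only linear in $\alpha$ (since $K_5 = 0$); this is precisely why the two squared correction terms, centered at $S_2(\bm{m}) = \tfrac{m-1}{m}$ and $S_3(\bm{m}) = \tfrac{(m-1)(m-2)}{m^2}$, are built into $\Phi$, and the estimate $36(\beta-1)^2 + 4 \ge 4$ is the key quantitative input. A minor subsidiary point is the existence of a maximizer of $\Phi$ on the non-compact set $\overline{\mathcal{P}}_0$; this can be handled by a standard compactness argument in the product topology (passing to $\overline{\mathcal{P}}$ and ruling out $x_0 > 0$ at the maximum via the formula~\eqref{equ:pF-general}), or by truncating to finite support and taking a limit using the monotonicity of the truncated maxima.
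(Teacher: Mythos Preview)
Your proposal is correct and follows essentially the same approach as the paper: both exploit that the penalty terms force $\Phi$ to be a strictly convex quadratic in $\alpha = x_i x_j$ (your leading coefficient $36(\beta-1)^2+4$ is exactly the paper's $4 + 36(1-\beta)^2$), reducing to balanced points $\bm{t}$, and then use Lemma~\ref{Lemma:Unique-Maximizer} to single out $\bm{m}$. The only cosmetic differences are that you route the computation through the power sums $p_2,p_3$ and define $\varepsilon$ as an infimum, whereas the paper works directly with $S_2,S_3$ and sets $\varepsilon = \tfrac{\kappa_F}{3}\min\{f(m)-f(m-1),\,f(m)-f(m+1)\}$ using the crude bound that each squared term is at most $1$; your acknowledged compactness point is handled by the truncation you sketch (maximize on each $\Delta_k$, where the maximizer is some $\bm{t}$ with $t\le k$, hence $\le \Phi(\bm m)$).
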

    \begin{proof}
        Recall that for any $t \in \mathbb{N}_{+}$, $p_F(\bm{t}) = \kappa_F \cdot \frac{(t-1)_{r-1}}{k^{\ell-1}} = \kappa_F \cdot f(t)$, where $f(t)$ is defined in~\eqref{equ:def-fk}.
        In what follows, we assume $f(r-1)=0$. 
        Define
        \begin{align*}
            \varepsilon \coloneqq \frac{\kappa_{F}}{3} \cdot \min\!\big\{ f(m) - f(m-1),\, f(m) - f(m+1) \big\}.
        \end{align*}
        It follows from Lemma~\ref{Lemma:Unique-Maximizer} that $\varepsilon > 0$. 

        Since $f(x)$ is unimodal (by Lemma~\ref{Lemma:Unique-Maximizer}), for every integer $t \neq m$, we have 
        \begin{align}\label{equ:Phi-max}
            \Phi(\bm{t})
            & = \kappa_{F} \cdot f(t) + \varepsilon \left(\tfrac{t-1}{t} - \tfrac{m-1}{m}\right)^2 + \varepsilon \left(\tfrac{(t-1)(t-2)}{t^2} - \tfrac{(m-1)(m-2)}{m^2}\right)^2 \notag \\[.3em]
            & \le \kappa_{F} \cdot \max\big\{ f(m-1),~f(m+1) \big\} + \varepsilon + \varepsilon \notag \\[.3em]
            & = \kappa_{F} \left( \max\big\{ f(m-1),~f(m+1) \big\} + \tfrac{2\kappa_F}{3} \cdot \min\big\{ f(m) - f(m-1),~f(m) - f(m+1) \big\} \right) \notag \\[.3em]
            & < \kappa_{F} \cdot f(m)
            = \Phi(\bm{m}). 
        \end{align}

        Suppose to the contrary that this claim fails. 
        Let $\bm{x} = (x_1,x_2,\ldots) \in \overline{\mathcal{P}} \setminus \{\bm{m}\}$ be a counterexample with $x_0 = 0$, i.e., $\Phi(\bm{x})\geq \Phi(\bm{m})$ but $\bm{x}\neq \bm{m}$.
        % Assume additionally, $\supp(\bm{x})$ is minimized over all counterexamples. 
        Using~\eqref{equ:Phi-max}, we conclude that there exist $i,j \in \supp(\bm{x})$ such that $x_i \neq x_j$. 
        Let $\alpha \coloneqq x_ix_j$ and $\beta \coloneqq x_i+x_j$.
        Similar to the proof of Claim~\ref{Claim:OPT-all-equal-b<=2} (more specifically,~\eqref{equ:pF-general-b<=2}), fix all $x_k$ where $k \in \mathbb{N} \setminus \{ i,j \}$. Note that since $\sum_{k=1}^{\infty} x_k = 1$, the sum $\beta = x_i + x_j$ is also fixed. So we may view $\Phi(\bm{x})$ as a polynomial in $\alpha$: 
        \begin{align*}
            \phi(\alpha)
            \coloneqq \Phi(\bm{x})
            = \ & p_F(\bm{x}) + \varepsilon \left( 2\alpha + J_{1} \right)^2 + \varepsilon \big( 6\alpha (1-\beta) + J_{2} \big)^2 \\
            = \ & \left(K_5 + 4\varepsilon + 36\varepsilon(1-\beta)^2 \right) \alpha^2 + \left(\beta K_4 + K_3 - 2 K_2 + 4\varepsilon J_{1} + 12\varepsilon(1-\beta)J_{2}\right) \alpha \\
            & + \left(\beta K_1 + \beta^2 K_2 + \varepsilon J_{1}^2 + \varepsilon J_{2}^2\right), 
        \end{align*}
        where $J_1, J_2$ are constants independent of $x_i, x_j$, and $K_1,\ldots , K_5$ are the same constants as in~\eqref{equ:pF-general-b<=2}.
        
        Let $\bm{y}$ and $\bm{z}$ be the sequences in $\overline{\mathcal{P}}$       
        obtained from $\bm{x}$ by replacing $(x_i,x_j)$ with $(x_i+x_j,0)$ and $\left(\frac{x_i+x_j}{2}, \frac{x_i+x_j}{2}\right)$, respectively, and then reordering the entries accordingly.
        Note that $y_0 = z_0 = x_0 = 0$. 
        Since $\phi(\alpha)$ is a quadratic polynomial in $\alpha$ with coefficient $K_5 + 4\varepsilon + 36\varepsilon(1-\beta)^2 > 0$, we have 
        \begin{align*}
            \Phi(\bm{x})
            = \phi(\alpha)
            < \max\left\{\phi(0),~\phi\left(\tfrac{(x_i+x_j)^2}{4}\right)\right\}
            = \max\left\{\Phi(\bm{y}),~\Phi(\bm{z})\right\},  
        \end{align*}
        a contradiction to the maximality of $\Phi(\bm{x})$. 
        This completes the proof of Claim~\ref{Claim:b<=2-similar-stability}. 
    \end{proof}

    \begin{claim}\label{CLAIM:OPT-x0-zero}
        Suppose that $\bm{x}\in \overline{\mathcal{P}}$ satisfies $x_0 > 0$. 
        Then $\bm{x} \not\in \OPT(F)$.
    \end{claim}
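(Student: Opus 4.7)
The plan is to derive a contradiction by passing from the putative $\bm{x}$ with $x_0 = u > 0$ to a sequence of auxiliary points in the $x_0 = 0$ face of $\overline{\mathcal{P}}$ and then invoking Claim~\ref{Claim:b<=2-similar-stability}. Concretely, for each integer $t \ge 1$, let $\bm{x}^{(t)} \in \overline{\mathcal{P}}$ be the non-increasing rearrangement of the sequence obtained from $\bm{x}$ by keeping the entries $x_1, x_2, \ldots$ unchanged and adjoining $t$ fresh entries each of size $u/t$; in particular $x^{(t)}_0 = 0$. Applying Claim~\ref{Claim:b<=2-similar-stability} to $\bm{x}^{(t)}$ gives
\begin{align*}
    p_F(\bm{x}^{(t)}) + \varepsilon A(\bm{x}^{(t)})^2 + \varepsilon B(\bm{x}^{(t)})^2 \le \Phi(\bm{m}) = p_F(\bm{m}),
\end{align*}
where I write $A(\bm{y}) = \sum_{(i,j) \in (\mathbb{N}_{+})_2} y_i y_j - \tfrac{m-1}{m}$ and $B(\bm{y}) = \sum_{(i,j,k) \in (\mathbb{N}_{+})_3} y_i y_j y_k - \tfrac{(m-1)(m-2)}{m^2}$, and where the equality $\Phi(\bm{m}) = p_F(\bm{m})$ is a direct computation using $(m)_2/m^2 = (m-1)/m$ and $(m)_3/m^3 = (m-1)(m-2)/m^2$.

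The next step is to pass to the limit $t \to \infty$. A combinatorial bookkeeping on $S_{a_1, \ldots, a_r}(\bm{x}^{(t)})$—splitting each ordered $r$-tuple of indices according to the subset $J \subseteq [r]$ of positions occupied by the new $u/t$-entries, and noting that any $J$ containing a non-singleton coordinate of $F$ contributes a factor $(t)_{|J|}(u/t)^{\sum_{j\in J} a_j}$ that vanishes as $t \to \infty$—shows $p_F(\bm{x}^{(t)}) \to p_F(\bm{x})$ via the expansion~\eqref{equ:pF-general}. Standard power-sum identities give
\begin{align*}
    A_\infty := \lim_{t\to\infty} A(\bm{x}^{(t)}) = \tfrac{1}{m} - \sum_{i\ge 1} x_i^2,
    \qquad
    B_\infty := \lim_{t\to\infty} B(\bm{x}^{(t)}) = 3\,A_\infty + 2\Big( \sum_{i\ge 1} x_i^3 - \tfrac{1}{m^2}\Big).
\end{align*}
Taking limits in the displayed inequality yields $p_F(\bm{x}) + \varepsilon(A_\infty^2 + B_\infty^2) \le p_F(\bm{m})$.

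The main obstacle, and the concluding step, is to show $A_\infty^2 + B_\infty^2 > 0$. Suppose for contradiction both are zero. Then $\sum_{i\ge 1} x_i^2 = \tfrac{1}{m}$ and $\sum_{i\ge 1} x_i^3 = \tfrac{1}{m^2}$. Applying the Cauchy--Schwarz inequality to the pair $(x_i^{1/2}, x_i^{3/2})$,
\begin{align*}
    \Big( \sum_{i\ge 1} x_i^2 \Big)^2
    \le \Big( \sum_{i\ge 1} x_i \Big) \Big( \sum_{i\ge 1} x_i^3 \Big)
    = (1-u)\cdot \tfrac{1}{m^2},
\end{align*}
so $\tfrac{1}{m^2} \le \tfrac{1-u}{m^2}$, contradicting $u > 0$. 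Hence $A_\infty^2 + B_\infty^2 > 0$ and therefore $p_F(\bm{x}) < p_F(\bm{m}) \le i(F)$ by Theorem~\ref{THM:LiuPikhurko-Reduced-to-OPT}, so $\bm{x} \notin \OPT(F)$.
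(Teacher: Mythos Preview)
Your proof is correct and shares the paper's core idea---splitting off the mass $x_0$ into $t$ new parts of size $x_0/t$, applying Claim~\ref{Claim:b<=2-similar-stability} to the resulting $\bm{x}^{(t)}$, and passing to the limit---but the two arguments diverge at the endgame. The paper first invokes Claim~\ref{Claim:OPT-all-equal-b<=2} to reduce to a \emph{balanced} counterexample $\bm{x}$ with $s$ equal parts of size $y=(1-x_0)/s$, then uses the assumption $\bm{x}\in\OPT(F)$ to force the two limit quantities to vanish exactly, and finally solves the resulting pair of equations to obtain $y=1/\sqrt{ms}$ and $s=m$, contradicting $x_0>0$. You bypass both the balancedness reduction and the explicit solving: your Cauchy--Schwarz step $(\sum x_i^2)^2 \le (\sum x_i)(\sum x_i^3)$ shows directly that $A_\infty=B_\infty=0$ is impossible when $\sum_{i\ge 1}x_i<1$, for \emph{any} $\bm{x}$ with $x_0>0$. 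This yields the slightly stronger conclusion $p_F(\bm{x})<p_F(\bm{m})$ outright (not merely $\bm{x}\notin\OPT(F)$ under the optimality hypothesis), and the argument is cleaner. The paper's route, on the other hand, makes the limiting computation of $p_F(\bm{x}^{(t)})$ entirely explicit thanks to the balanced structure, whereas your convergence step requires the $J$-subset bookkeeping to match the expansion~\eqref{equ:pF-general}; this bookkeeping is correct but worth writing out in full in a final version.
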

    \begin{proof}
        Suppose to the contrary that this is not true. 
        Then by Claim~\ref{Claim:OPT-all-equal-b<=2}, there exists a counterexample $\bm{x} \in \OPT(F)$ with $x_0>0$ and $x_1 = \cdots = x_{s} = y \coloneqq \frac{1-x_0}{s}$ for some $s \ge 1$. 
        For every integer $k \ge 1$, let $\bm{x}^{(k)}$ be the sequence obtained from $\bm{x}$ by appending the length-$k$ sequence $\frac{x_0}{k}, \ldots, \frac{x_0}{k}$, and then reordering the entries. 
        By applying \eqref{equ:express-pF-small} to $p_{F}(\bm{x})$ and \eqref{equ:pF-x0-zero} together with \eqref {equ:def-S-polynomial} to $p_{F}(\bm{x}^{(k)})$, we obtain that
        \begin{align*}
            p_{F}(\bm{x}) - p_{F}(\bm{x}^{(k)})
            & = \kappa_{F} \cdot \sum_{j=0}^{2r-\ell} \tbinom{2r-\ell}{j}(1-sy)^{j} (s)_{r-j} y^{\ell-j} \\
            & \qquad - \kappa_{F} \cdot \sum_{j=0}^{2r-\ell} \tbinom{2r-\ell}{j} (k)_{j} \left(\tfrac{1-sy}{k}\right)^{j} (s)_{r-j} y^{\ell-j} \\
            & \qquad - \kappa_{F} \cdot \sum_{i=1}^{\ell-r}\sum_{j=0}^{2r-\ell}\tbinom{\ell-r}{i}\tbinom{2r-\ell}{j} (k)_{i+j} \left(\tfrac{1-sy}{k}\right)^{2i+j} (s)_{r-i-j} y^{\ell-2i-j}. 
        \end{align*}
        Since $\frac{(k)_{j}}{k^j} \to 1$ and the third term goes to $0$ as $k \to \infty$ (because $\frac{(k)_{i+j}}{k^{2i+j}} \le \frac{1}{k} \to 0$), we have 
        \begin{align*}
            p_{F}(\bm{x}) - p_{F}(\bm{x}^{(k)})
            \to 0 \quad\text{as}\quad k \to \infty.
        \end{align*}
        %
        % Choose $k$ sufficiently large so that $p_{F}(\bm{x}^{(k)}) \ge p_{F}(\bm{x}) - \frac{\varepsilon}{3}$. 
        Combining this with Claim~\ref{Claim:b<=2-similar-stability}, we obtain 
        \begin{align*}
            0 \leq \Phi(\bm{x}^{(k)}) - p_{F}(\bm{x}^{(k)})
            & \le \Phi(\bm{m}) - p_{F}(\bm{x}^{(k)})\\
            & = p_{F}(\bm{m}) - p_{F}(\bm{x}^{(k)}) 
            \le p_{F}(\bm{x}) - p_{F}(\bm{x}^{(k)}) 
            \to 0 \quad\text{as}\quad k \to \infty.
        \end{align*}
        This implies that both
        \begin{align*}
            \lim_{k \to \infty} \sum_{(i,j) \in (\mathbb{N}_{+})_{2}} \bm{x}^{(k)}_i\bm{x}^{(k)}_j -\tfrac{m-1}{m}  = 0, \quad\text{and}\qquad
            \lim_{k \to \infty} \sum_{(i,j,k) \in (\mathbb{N}_{+})_{3}} \bm{x}^{(k)}_i\bm{x}^{(k)}_j\bm{x}^{(k)}_k-\tfrac{(m-1)(m-2)}{m^2} = 0.
        \end{align*}
        Simplifying this, we obtain 
        \begin{align*}
            \frac{m-1}{m} & = (1-sy)^2 + 2(1-sy)sy + s(s-1)y^2, \\
            \frac{(m-1)(m-2)}{m^2} & = (1-sy)^3 + 3(1-sy)^2sy + 3(1-sy)s(s-1)y^2 + s(s-1)(s-2) y^3. 
        \end{align*}
        It follows from the first equality that $y = \frac{1}{\sqrt{ms}}$. 
        Plugging it into the second equality we obtain 
        \begin{align*}
            1-\frac{3}{m}+\frac{2s}{(ms)^{3/2}} = \frac{(m-1)(m-2)}{m^2}, 
        \end{align*}
        which implies $s = m$. 
        However, this means that $x_0 = 1 - sy = 0$, contradicting the choice of $\bm{x}$.
    \end{proof}%CLAIM

    Suppose that $\bm{x} \in \OPT(F)$. 
    It then follows from Claim~\ref{CLAIM:OPT-x0-zero} that $x_0 = 0$, and from Claim~\ref{Claim:b<=2-similar-stability} that $\bm{x} = \bm{m}$.
    This shows that $\OPT(F) = \{\bm{m}\}$ and thereby completes the proof of Case 1. 

    \medskip

    \textbf{Case 2}: $k \in [r, m-1]$. 

    \medskip 

    An almost identical argument to the proof of Claim~\ref{Claim:OPT-all-equal-b<=2} yields the following result, so we omit the details here.
    
    \begin{claim}\label{CLAIM:OPT-all-equal-k-small}
        The following statements hold. 
        \begin{enumerate}[label=(\roman*)]
            \item\label{CLAIM:OPT-all-equal-k-small-1} If $\ell \ge r+2$, then every maximizer $\bm{x} \in \OPT_{k+1}(F)$ is balanced.
            \item\label{CLAIM:OPT-all-equal-k-small-2} If $\ell = r+1$, then for every unbalanced maximizer $\bm{x} \in \OPT_{k+1}(F)$, there exists a balanced  maximizer $\bm{y} \in \OPT_{k+1}(F)$ such that $|\supp(\bm{y})| < |\supp(\bm{x})|$. 
        \end{enumerate}
    \end{claim}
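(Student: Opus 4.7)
The plan is to mimic the proof of Claim~\ref{Claim:OPT-all-equal-b<=2} almost verbatim. The key simplification is that every $\bm{x} \in \OPT_{k+1}(F) \subseteq \mathbb{S}^{k-1}$ automatically satisfies $x_0 = 0$, so $p_F(\bm{x}) = \kappa_F \cdot S_{a_1, \ldots, a_r}(\bm{x})$ by \eqref{equ:pF-x0-zero}. Moreover, the two replacements $(x_i, x_j) \to (\beta, 0)$ and $(x_i, x_j) \to (\beta/2, \beta/2)$, with $\beta \coloneqq x_i + x_j$, keep the resulting sequence inside $\mathbb{S}^{k-1}$, since the total sum is preserved and the number of positive coordinates does not increase.

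First I would suppose the claim fails and pick a counterexample $\bm{x} = (x_1, x_2, \ldots) \in \OPT_{k+1}(F)$ with $|\supp(\bm{x})|$ as small as possible. Then some pair $i, j \in \supp(\bm{x})$ satisfies $x_i \neq x_j$. Fixing all other coordinates, $\beta = x_i + x_j$ is determined, and since $a_1 = 2$ and $a_r = 1$ by Fact~\ref{FACT:almost-balanced-sizes}~\ref{FACT:almost-balanced-sizes-1}, every monomial in the expansion of $S_{a_1, \ldots, a_r}(\bm{x})$ involves $x_i$ and $x_j$ each to degree at most two, so after substituting $x_i^2 + x_j^2 = \beta^2 - 2\alpha$, $x_i x_j = \alpha$, and $x_i^2 x_j^2 = \alpha^2$ exactly as in \eqref{equ:pF-general-b<=2}, the quantity $p_F(\bm{x})$ takes the form
\[
    \psi(\alpha) \;=\; K_5\, \alpha^2 + (\beta K_4 + K_3 - 2 K_2)\,\alpha + (\beta K_1 + \beta^2 K_2),
\]
for some constants $K_1, \ldots, K_5 \ge 0$ depending on the remaining coordinates. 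Their sign pattern coincides with that in Claim~\ref{Claim:OPT-all-equal-b<=2}: $K_1, K_2, K_4 > 0$ always, $K_3 > 0$ iff $a_{r-1} = 1$, and $K_5 > 0$ iff $a_2 = 2$, which holds exactly when $\ell \ge r+2$.

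For $\ell \ge r+2$, the quadratic $\psi(\alpha)$ is strictly convex and cannot attain its maximum on $[0, \beta^2/4]$ at an interior point; letting $\bm{y}, \bm{z} \in \mathbb{S}^{k-1}$ be the sequences obtained by the two replacements above (corresponding to $\alpha = 0$ and $\alpha = \beta^2/4$, respectively), one obtains $p_F(\bm{x}) < \max\{p_F(\bm{y}),\, p_F(\bm{z})\}$, contradicting $\bm{x} \in \OPT_{k+1}(F)$ and proving (i). For $\ell = r+1$ one has $K_5 = 0$, so $\psi$ is affine; a nonzero slope gives the same contradiction, hence the slope must vanish, so $p_F(\bm{y}) = p_F(\bm{x})$ and $\bm{y} \in \OPT_{k+1}(F)$ has strictly smaller support. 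If $\bm{y}$ is balanced we are done; otherwise the minimality of $|\supp(\bm{x})|$ forces $\bm{y}$ to already satisfy (ii), furnishing a balanced maximizer of support even smaller than $|\supp(\bm{x})|$, which in turn witnesses (ii) for $\bm{x}$, contradicting the choice of $\bm{x}$. The only subtlety I anticipate is verifying that both replacements remain in $\mathbb{S}^{k-1}$ (i.e., respect the at-most-$k$-parts constraint), which is immediate since the number of positive coordinates never grows; no genuinely new obstacle is expected beyond what already appears in Claim~\ref{Claim:OPT-all-equal-b<=2}.
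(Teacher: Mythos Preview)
Your proposal is correct and follows essentially the same approach as the paper, which explicitly states that ``an almost identical argument to the proof of Claim~\ref{Claim:OPT-all-equal-b<=2} yields the following result, so we omit the details here.'' Your observation that both replacements $(x_i,x_j)\mapsto(\beta,0)$ and $(x_i,x_j)\mapsto(\beta/2,\beta/2)$ preserve membership in $\mathbb{S}^{k-1}$ (since the coordinate sum is unchanged and the support does not grow) is exactly the point that makes the transfer from $\OPT(F)$ to $\OPT_{k+1}(F)$ go through.
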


    Recall that for every integer $t \ge r$, we have $p_F(\bm{t}) = \kappa_F \cdot f(t)$, with $f(t)$ defined in~\eqref{equ:def-fk}. 
    By Lemma~\ref{Lemma:Unique-Maximizer}, the function $f$ is strictly increasing on $[r,k]$, so $p_F(\bm{t}) < p_F(\bm{k})$ for every $t < k$. Hence, $i_{k+1}(F) = p_F(\bm{k})$.

    If $\ell \ge r+2$, then Claim~\ref{CLAIM:OPT-all-equal-k-small}~\ref{CLAIM:OPT-all-equal-k-small-1} immediately implies that $\OPT_{k+1}(F) = {\bm{k}}$.
    
    Now, suppose $\ell = r+1$. Take an arbitrary maximizer $\bm{x} \in \OPT_{k+1}(F)$. If $\bm{x} \neq \bm{k}$, then by Claim~\ref{CLAIM:OPT-all-equal-k-small}~\ref{CLAIM:OPT-all-equal-k-small-2} we obtain another balanced maximizer $\bm{y} \in \OPT_{k+1}(F)$ with $|\supp(\bm{y})| < |\supp(\bm{x})|$. Consequently, $p_F(\bm{y}) = p_F(\bm{t})$ for some $t < k$, contradicting $i_{k+1}(F) = p_F(\bm{k}) > p_F(\bm{t})$. Therefore, we must have $\bm{x} = \bm{k}$, and again $\OPT_{k+1}(F) = \{\bm{k}\}$.
    This completes the proof of Theorem~\ref{THM:almost-balanced-asymptotic} for $\ell \le 2r-1$.
\end{proof}

%%%%%%%%%%%%%%%%%%%%%%%%%%%%%%%%%%%%%
\subsection{Proof of Theorem~\ref{THM:almost-balanced-asymptotic} for $\ell \ge 2r$}\label{SUBSEC:proof-asymptotic-large}
%
% \begin{theorem}\label{THM:almost-balanced-asymptotic-large}
%     Theorem~\ref{THM:almost-balanced-asymptotic} holds for $\ell \ge 2r$.
% \end{theorem}
In this subsection, we present the proof of Theorem~\ref{THM:almost-balanced-asymptotic} for $\ell \ge 2r$.
We will use the following inequality, which constitutes the most crucial ingredient of our proof and distinguishes our approach from those in~\cite{Brown94,Bollobas95,LiuMubayi23,Yuster26}. 
\begin{proposition}\label{PROP:a-b-ineq}
    Let $x$ and $y$ be two positive real numbers. 
    For positive integers $d, d_1, d_2$ define
    %\begin{align*}
        %\mu_{d} & \coloneqq (x + y)^{d} - x^d - y^d, \\[0.3em]
        %\omega_{d} & \coloneqq x^{d} + y^{d} - 2\left(\tfrac{x+y}%{2}\right)^{d}, \\[0.3em]
        %\psi_{d_1, d_2} & \coloneqq x^{d_1} y^{d_2} + x^{d_2} y^{d_1}, \\[0.3em]
        %\phi_{d_1,d_2} & \coloneqq 2\left(\tfrac{x+y}{2}\right)^{d_1+d_2} - x^{d_1} y^{d_2} - x^{d_2} y^{d_1}. 
    %\end{align*}
    \begin{alignat*}{2}
        \mu_{d} & \coloneqq (x + y)^{d} - x^d - y^d, \qquad
        & \omega_{d} & \coloneqq x^{d} + y^{d} - 2\left(\tfrac{x+y}{2}\right)^{d}, \\
        \psi_{d_1, d_2} & \coloneqq x^{d_1} y^{d_2} + x^{d_2} y^{d_1}, \qquad
        & \phi_{d_1,d_2} & \coloneqq 2\left(\tfrac{x+y}{2}\right)^{d_1+d_2} - x^{d_1} y^{d_2} - x^{d_2} y^{d_1}. 
    \end{alignat*}
    Suppose that $x \neq y$, and let $a,b,s,t$ be positive integers satisfying $b \ge t \ge s \ge a >\binom{b-a}{2}$. 
    Then 
    \begin{equation}\label{eq:ratio-inequality}
        \frac{\omega_{t}}{\mu_{t}}
        \le \frac{\omega_{a}}{\mu_{a}}
        < \frac{\phi_{s,t}}{\psi_{s,t}}. 
    \end{equation}
    Equivalently, 
    \begin{align}\label{eq:ratio-inequality-b}
        \mu_{t} \omega_{a} - \mu_{a} \omega_{t} \geq 0, 
        \quad\text{and}\qquad  
        \mu_{a} \phi_{s,t} - \omega_{a} \psi_{s, t} > 0.
    \end{align}
\end{proposition}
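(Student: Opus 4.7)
The plan is to reduce, by homogeneity, to a one-parameter polynomial problem in $w := uv$ and then handle the two inequalities of~\eqref{eq:ratio-inequality-b} separately. Normalize $x + y = 1$, write $u := x$, $v := y$ (so $u + v = 1$, $u, v > 0$, $u \neq v$), and introduce $s_d := u^d + v^d$ and $w := uv \in (0, 1/4)$. The linear recurrence $s_{d+1} = s_d - w\, s_{d-1}$ with $s_0 = 2$, $s_1 = 1$ makes each $s_d$ a polynomial in $w$, and in these variables
\[
\mu_d = 1 - s_d, \quad \omega_d = s_d - 2^{1-d}, \quad \psi_{s,t} = w^s\, s_{t-s}, \quad \phi_{s,t} = 2^{1-s-t} - w^s\, s_{t-s}.
\]
Two identities will drive everything: the telescoping identity $\mu_d = w \sum_{k=0}^{d-2} s_k$ (obtained from $\mu_d - \mu_{d-1} = w\, s_{d-2}$ via the recurrence), and, using $u + v = 1$,
\[
s_{d-1} - 2w\, s_{d-2} = u^{d-1}(1 - 2v) + v^{d-1}(1 - 2u) = (2u - 1)(u^{d-1} - v^{d-1}) \ge 0.
\]

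For the first inequality $\omega_a \mu_t \ge \omega_t \mu_a$, I use $\omega_d + \mu_d = 1 - 2^{1-d}$ to rewrite it as the monotonicity of $\mu_d / (1 - 2^{1-d})$ in $d$. Cross-multiplying, this is equivalent to $\Delta_d(w) := (2^d - 2) w\, s_{d-1} - (1 - s_d) \ge 0$ for every $d \ge 2$. A short computation using the recurrence yields the telescoping identity
\[
\Delta_{d+1} - \Delta_d = w(2^d - 1)(s_{d-1} - 2w\, s_{d-2}),
\]
and the factorization of $s_{d-1} - 2w\, s_{d-2}$ above makes this non-negative. Since a direct check gives $\Delta_2 \equiv 0$, induction on $d$ yields $\Delta_d \ge 0$ for all $d \ge 2$, and applying the resulting monotonicity from $d = a$ up to $d = t$ delivers the first inequality.

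For the strict inequality $\mu_a \phi_{s,t} > \omega_a \psi_{s,t}$, using $\phi_{s,t} + \psi_{s,t} = 2^{1-s-t}$ and the above expressions the problem reduces to
\[
D(w) := 2^{1-s-t}(1 - s_a(w)) - (1 - 2^{1-a})\, w^s s_{t-s}(w) > 0 \qquad \text{for all } w \in (0, 1/4).
\]
A direct evaluation gives $D(0) = 0$ (since $s_a(0) = 1$ and $w^s|_{w=0} = 0$) and $D(1/4) = 0$ (the balanced point $u = v = 1/2$), so both $w$ and $(1 - 4w)$ divide $D(w)$ as polynomials. My plan is to extract the factor $w(1 - 4w)$ explicitly and establish strict positivity of the cofactor on $[0, 1/4]$. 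The almost-balanced hypothesis $\binom{b - a}{2} < a$ enters at precisely this point: a Taylor expansion at $u = \tfrac{1}{2} + \epsilon$ shows that the $\epsilon^2$-coefficient of $D$ equals a positive multiple of
\[
(1 - 2^{1-a})\bigl[2s - (t - s)(t - s - 1)\bigr] - 2^{1-a}\, a(a - 1),
\]
whose non-negativity is guaranteed by $s \ge a$ together with $(t-s)(t-s-1) \le (b-a)(b-a-1) < 2a$.

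The main obstacle I anticipate is handling the borderline configurations in which this leading $\epsilon^2$-coefficient vanishes (which can happen only for $a = 2$ at the extreme choice $s = 2$, $t = 4$, where a direct calculation gives $D(w) = w(1 - 4w)^2 / 16$), so that $(1 - 4w)^2$ rather than $(1 - 4w)$ divides $D$ and one must examine the next non-vanishing Taylor coefficient; and, more generally, establishing strict positivity of the cofactor uniformly throughout $[0, 1/4]$ rather than just near the endpoint. My intended route is either (a) an explicit factorization $D(w) = w(1 - 4w)^{1 + \delta} Q(w)$ with $\delta \in \{0, 1\}$ and $Q$ visibly non-negative on $[0, 1/4]$, obtained by iterating the recurrence $s_{d+1} = s_d - w s_{d-1}$, or (b) an induction on the parameters $(s, t, a)$ that reduces to a handful of small base cases verifiable by direct calculation.
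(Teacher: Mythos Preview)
Your treatment of the first inequality $\omega_a\mu_t \ge \omega_t\mu_a$ is complete and correct. The reduction to $\Delta_d\ge 0$, the telescoping identity $\Delta_{d+1}-\Delta_d = w(2^d-1)(s_{d-1}-2ws_{d-2})$, and the factorization $s_{d-1}-2ws_{d-2} = (u-v)(u^{d-1}-v^{d-1})$ all check out. The paper proves the same step $k\to k+1$ by a direct symmetrization of binomial sums (showing $(x+y)^{k+1}\le x^{k+1}+y^{k+1}+(2^k-1)(x^ky+xy^k)$); your inductive route is equivalent and arguably tidier.

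The second inequality, however, is not proved in your proposal---only set up. You correctly reduce to $D(w)>0$ on $(0,\tfrac14)$, correctly observe the factor $w(1-4w)$, and correctly isolate the single borderline case $(a,s,t,b)=(2,2,4,4)$. But neither of your ``intended routes'' is carried out, and neither is clearly viable as stated. Knowing that the cofactor $Q(w)=D(w)/[w(1-4w)]$ is positive at both endpoints (indeed $Q(0)=2^{1-s-t}a$ and $Q(\tfrac14)\ge 0$) does not by itself force positivity on the interior, and no concrete induction scheme on $(s,t,a)$ is described. This is a genuine gap.

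The paper closes it by a different decomposition. Instead of attacking $D(w)>0$ head-on, it interposes the quantity $4(1-2^{1-a})w$ (which in unnormalized variables is $(2^a-2)xy(x+y)^{a-2}/2^{a-2}$) and proves two separate estimates:
\[
\mu_a \ \ge\ 4(1-2^{1-a})\,w
\qquad\text{and}\qquad
2^{3-s-t} \ >\ w^{s-1}\,s_{t-s}(w).
\]
Chaining these gives $2^{1-s-t}\mu_a \ge (1-2^{1-a})\cdot 2^{3-s-t}w > (1-2^{1-a})w^{s}s_{t-s}$, which is exactly $D(w)>0$. The first estimate concerns $\mu_a$ alone and is proved by a sign analysis of the coefficients $M_i=2^{a-2}\binom{a}{i}-(2^a-2)\binom{a-2}{i-1}$; the second is where the almost-balanced hypothesis $\binom{b-a}{2}<a$ actually enters, via a calculus argument showing that $z\mapsto z^{t-1}(1-z)^{s-1}+z^{s-1}(1-z)^{t-1}$ is uniquely maximized at $z=\tfrac12$. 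This split decouples the parameters: each piece is a one-variable inequality with clear monotonicity or convexity structure, avoiding the need to control a multi-parameter polynomial cofactor uniformly over an interval.
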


\begin{proof}
    First we prove that $\frac{\omega_{t}}{\mu_{t}} \le \frac{\omega_{a}}{\mu_{a}}$, which follows from the following claim. 

    \begin{claim}\label{CLAIM:key-inequality-I}
        We have $\frac{\omega_{k+1}}{\mu_{k+1}} \le \frac{\omega_{k}}{\mu_{k}}$ for every integer $k \ge a$. 
    \end{claim}
    \begin{proof}
        By definition, this is equivalent to showing 
        \[
            \left( x^{k+1} + y^{k+1} - 2\left(\tfrac{x + y}{2}\right)^{k+1} \right) \left( (x + y)^{k} - x^{k} - y^{k} \right) 
            \leq \left( x^{k} + y^{k} - 2\left(\tfrac{x + y}{2}\right)^{k} \right) \left( (x + y)^{k+1} - x^{k+1} - y^{k+1} \right).
        \]
        Simplifying this reduces to showing that: 
        \begin{equation}\label{eq:rearranged-monotonicity}
            (x + y)^{k+1} \leq x^{k+1} + y^{k+1} + (2^k - 1)(x^k y + x y^k). 
        \end{equation}
        Since for every $i \in [k]$ it holds that  
        \begin{align*}
            (x^k y + x y^k) - (x^{i} y^{k+1-i} + x^{k+1-i} y^{i}) 
            = x y (x^{i-1} - y^{i-1})(x^{k-i} - y^{k-i}) 
            \ge 0, 
        \end{align*}
        it follows that 
        \begin{align*}
            \sum_{i=1}^{k} \tbinom{k+1}{i}(x^k y + x y^k) 
            & \ge \sum_{i=1}^{k} \tbinom{k+1}{i} x^{i} y^{k+1-i} + \sum_{i=1}^{k} \tbinom{k+1}{i} x^{k+1-i} y^{i}, 
        \end{align*}
        which implies that 
        \begin{align*}
            \left(2^{k+1}-2\right)(x^k y + x y^k) 
            \ge 2\left( (x+y)^{k+1} - \left(x^{k+1} + y^{k+1}\right) \right)
        \end{align*}
        After dividing both sides by $2$ and rearranging, we obtain~\eqref{eq:rearranged-monotonicity}.  
    \end{proof}%CLAIM

    Next, we prove that $\frac{\omega_{a}}{\mu_{a}} < \frac{\phi_{s,t}}{\psi_{s,t}}$ for $b\geq t\geq s\geq a\geq 2$.
    By definition, this inequality is equivalent to
    \begin{align*}
        \big( (x + y)^{a} - x^{a} - y^{a} \big) \left( 2\left(\tfrac{x + y}{2}\right)^{s+t} - x^{s} y^{t} - x^{t} y^{s} \right) 
        > \left( x^{a} + y^{a} - 2\left( \tfrac{x + y}{2} \right)^{a} \right) \big( x^s y^t + x^t y^s \big).
    \end{align*}
    Simplifying this reduces to showing that: 
    \begin{align}\label{eq:simplification-RHS}
        \left( \tfrac{x+y}{2} \right)^{s+t-a} \big( (x + y)^{a} - x^{a} - y^{a} \big) 
        > \left( 2^{a-1}-1 \right) \left( x^s y^t + x^t y^s \right).
    \end{align}
    
    We first prove the following auxiliary inequalities:
    \begin{claim}
        The following inequalities hold:
        \begin{align}
            2^{a-2} \big( (x + y)^{a} - x^{a} - y^{a} \big) 
            & \ge (2^a-2) x y (x+y)^{a-2}; \label{eq:reduction-to-Sidorenko} \\[.4em]
            2\left( \tfrac{x+y}{2} \right)^{s+t-2} 
            & > x^{s-1} y^{t-1} + x^{t-1} y^{s-1}.  \label{eq:Sidorenko}
        \end{align}
    \end{claim}
    \begin{proof}
        First, we prove~\eqref{eq:reduction-to-Sidorenko}. 
        It is straightforward to verify that~\eqref{eq:reduction-to-Sidorenko} holds with equality when $a \in \{2,3\}$, so we may assume that $a \ge 4$. 
        For $i \in [a-1]$, let 
        \begin{align*}
            M_i 
            \coloneqq 2^{a-2} \tbinom{a}{i} - (2^a-2)\tbinom{a-2}{i-1},  
        \end{align*}
        %
        % The inequality is equivalent to
        % \begin{equation}
        %     \sum_{k=1}^{a-1} M_k x^k y^{a-k} \geq 0, \label{eq:expansion}
        % \end{equation}
        % where $M_k \coloneqq 2^{a-2} \binom{a}{k} - (2^a-2)\binom{a-2}{k-1}$. Note that $M_k = M_{a-k}$ by symmetry.
        noting that $M_{i} = M_{a-i}$.
        Let 
        \begin{align*}
            f(i)
            \coloneqq \tfrac{2^{a-2} \tbinom{a}{i}}{(2^a-2)\tbinom{a-2}{i-1}}
            = \tfrac{2^{a-2} a(a-1)}{(2^a-2)i(a-i)}.
        \end{align*}
        Observe that $f(i)$ is decreasing in $i$ for $i \in (0,a/2)$ and increasing in $i$ for $i \in (a/2, a)$. 
        Also 
        \begin{align*}
            f(1) = f(a-1) = \tfrac{2^{a-2} a}{2^a-2} > 1
            \quad\text{and}\quad 
            f(a/2) = \tfrac{2^{a-2} (a-1)}{(2^a-2)a} < 1. 
        \end{align*}
        So there exists $i_0 \in [2,a/2]$ such that 
        \begin{align*}
            \begin{cases}
                M_i > 0, &\quad\text{if}\quad i < i_0 \mbox{ or } i > a-i_0, \\
                M_i \le 0, &\quad\text{if}\quad i \in [i_0, a - i_0]. 
            \end{cases}
        \end{align*}
        For $i,j \in [1, a/2]$ with $i < j$, we have 
        \begin{align*}
            \left(x^{i} y^{a-i} + x^{a-i} y^{i}\right) - \left(x^{j} y^{a-j} + x^{a-j} y^j\right)
            = x^{i}y^{i} \left(x^{j-i} - y^{j-i} \right) \left(x^{a-i-j} - y^{a-i-j}\right)
            \ge 0. 
        \end{align*}

        Define $\gamma_{i} \coloneqq x^{i}y^{a-i}$ and $\beta_{i} \coloneqq (\gamma_{i} + \gamma_{a-i})/2$ for $i \in [1, a/2]$. 
        The inequality above shows that for $i < j \leq a/2$, we have $\gamma_i+\gamma_{a-i} \geq \gamma_j+ \gamma_{a-j}$ and consequently $\beta_i \geq \beta_j$.
        Also note that 
        \begin{align*}
            \sum_{i=1}^{a-1} M_{i}
            = 2^{a-2} \sum_{i=1}^{a-1} \tbinom{a}{i} - \left(2^{a}-2\right) \sum_{i=1}^{a-1} \tbinom{a-2}{i-1}
            = 2^{a-2} \left(2^a - 2\right) - \left(2^{a}-2\right) 2^{a-2}
            = 0, 
        \end{align*}
        Then, using the Binomial Theorem and the symmetry $M_i=M_{a-i}$ and $\beta_i=\beta_{a-i}$, we obtain that
        \begin{align*}
            & 2^{a-2} \big( (x + y)^{a} - x^{a} - y^{a} \big) 
            - (2^a-2) x y (x+y)^{a-2} \\
            = &\sum_{i=1}^{a-1} M_{i} \gamma_{i}
            = \frac{1}{2} \sum_{i=1}^{a-1} \left( M_{i} \gamma_{i} + M_{a-i} \gamma_{a-i} \right)
            = \sum_{i=1}^{a-1} M_{i} \beta_{i}
            = \sum_{i=1}^{i_0-1} M_{i} \beta_{i} + \sum_{i=i_0}^{a-i_0} M_{i} \beta_{i} +  \sum_{i=a-i_0+1}^{a-1} M_{i} \beta_{i} \\
            \geq &\left ( \sum_{i=1}^{i_0-1} M_{i} \right ) \min_{1\leq i \leq i_0-1} \beta_i +  \left ( \sum_{i=i_0}^{a-i_0} M_{i} \right ) \max_{i_0\leq i \leq a-i_0} \beta_i + \left ( \sum_{i=a-i_0+1}^{a-1} M_{i} \right) \min_{a-i_0+1\leq i \leq a-1} \beta_i \\
            =  &\left ( \sum_{i=1}^{i_0-1} M_{i} \right ) \beta_{i_0-1} +  \left ( \sum_{i=i_0}^{a-i_0} M_{i} \right ) \beta_{i_0} + \left ( \sum_{i=a-i_0+1}^{a-1} M_{i} \right) \beta_{a-i_0+1}
            = 2 \left( \sum_{i=1}^{i_0-1} M_{i} \right ) \left(\beta_{i_0-1} - \beta_{i_0} \right) 
            \geq 0, 
            %= \frac{1}{2} \sum_{i=1}^{a-1} \left( \gamma_{i} + \gamma_{a-i} \right) M_{i} \\
            %& \le \frac{\gamma_{1} + \gamma_{a-1}}{2} \sum_{i=1}^{a-1} \frac{\gamma_{i} + \gamma_{a-i}}{\gamma_{1} + \gamma_{a-1}} M_{i}
            % & \ge \frac{1}{2} \sum_{i=1}^{i_0} \left( \gamma_{i_0} + \gamma_{a-i_0} \right) M_{i} + \frac{1}{2} \sum_{i=i_0+1}^{a-1} \left( \gamma_{\lfloor a/2 \rfloor} + \gamma_{\lceil a/2 \rceil} \right) M_{i}
        \end{align*}
        where the last equation uses the symmetry and the fact that $\sum_{i=1}^{a-1} M_i=0$.
        This proves~\eqref{eq:reduction-to-Sidorenko}. 

        \bigskip 
        
        It remains to prove~\eqref{eq:Sidorenko}. 
        The argument is adapted from the proof of~\cite[Theorem 3]{Brown94}, with some modifications.
        Define, for every $z \in [0,1]$, 
        \begin{align*}
            g(z) \coloneqq z^{t-1}(1-z)^{s-1} + z^{s-1}(1-z)^{t-1}
        \end{align*}
        We first show that $g$ attains its maximum uniquely at $z = 1/2$.
        
        Letting $x \coloneqq (1-z)/z$.
        Straightforward calculations show that 
        \begin{align*}
            g'(z) = z^{t-1}(1-z)^{s-2} \Bigl( (s-1)x^{t-s+1} - (t-1)x^{t-s} + (t-1)x - s + 1 \Bigr),
        \end{align*}
        Define 
        \begin{align*}
            h(x) \coloneqq (s-1)x^{t-s+1} - (t-1)x^{t-s} + (t-1)x - s + 1
        \end{align*}
        Let $y \coloneqq x - 1$. Then for $x\ge 1$, we have 
        \begin{align*}
            h(x) &= (s-1)(1+y)^{t-s+1} - (t-1)(1+y)^{t-s} + (t-1)(1+y) - s + 1 \\
            &= \Bigl( (s-1)(t-s+1) - (t-1)(t-s) + (t-1) \Bigr) y 
            + \sum_{i=2}^{t-s+1} \bigl( (s-1)\tbinom{t-s+1}{i} - (t-1)\tbinom{t-s}{i} \bigr) y^i \\
            &= 2\Bigl( s - 1 - \tbinom{t-s}{2} \Bigr) y 
            + \sum_{i=2}^{t-s+1} \tbinom{t-s}{i-1} \frac{1}{i} 
            \Bigl( (s-1)(t-s+1) - (t-1)(t-s-i+1) \Bigr) y^i \\
            &\geq 2\Bigl( s - 1 - \tbinom{t-s}{2} \Bigr) y 
            + \sum_{i=2}^{t-s+1} \tbinom{t-s}{i-1} \frac{1}{i} 
            \Bigl( (s-1)(t-s+1) - (t-1)(t-s-1) \Bigr) y^i \\
            &= 2\Bigl( s - 1 - \tbinom{t-s}{2} \Bigr) y 
            + \sum_{i=2}^{t-s+1} \tbinom{t-s}{i-1} \frac{2}{i} \Bigl( s - 1 - \tbinom{t-s}{2} \Bigr) y^i 
            \ge 0, 
        \end{align*}
        where in the last inequality we used the assumption that $\binom{b-a}{2} \le a-1$ (and thus, $s - 1 \geq a - 1 \geq \binom{b-a}{2} \geq \binom{t-s}{2}$).
        
        It follows that $g'(z) \ge 0$ for $z\in(0,1/2)$, and hence, $g$ is nondecreasing on $[0, 1/2]$.
        Since $g$ is symmetric around $z=1/2$ and is a polynomial which is not linear (since $s, t \ge 2$), it attains its maximum on the interval $[0,1]$ uniquely at $z=1/2$. 
        Therefore,~\eqref{eq:Sidorenko} holds.
    \end{proof}
    
    Combining~\eqref{eq:reduction-to-Sidorenko} and~\eqref{eq:Sidorenko}, we obtain
    \begin{align*}
        \left( \tfrac{x+y}{2} \right)^{s+t-a} \big( (x + y)^{a} - x^{a} - y^{a} \big) 
        & \ge \left( \tfrac{x+y}{2} \right)^{s+t-a} \left(2^a-2\right) xy \left( \tfrac{x+y}{2} \right)^{a-2} \\[.4em]
        & > \tfrac{2^a-2}{2} xy \left(x^{s-1} y^{t-1} + x^{t-1} y^{s-1}\right) 
        = \left( 2^{a-1}-1 \right) \left( x^s y^t + x^t y^s \right), 
    \end{align*}
    which proves~\eqref{eq:simplification-RHS}. 
    This completes the proof of Proposition~\ref{PROP:a-b-ineq}.
\end{proof}%PROP

We are now ready to prove Theorem~\ref{THM:almost-balanced-asymptotic} for $\ell \ge 2r$. 
\begin{proof}[\bf Proof of Theorem~\ref{THM:almost-balanced-asymptotic} for $\ell \ge 2r$]
    Let $F = K_{a_1, \ldots, a_{r}}$ be an almost balanced complete $r$-partite graph on $\ell \ge 2r$ vertices, that is, $\ell = a_1 + \cdots + a_r$.
    Let $m = m_{r,\ell}$ be the integer give by Lemma~\ref{Lemma:Unique-Maximizer}. 
    Assume that $a_1 \ge \cdots \ge a_r$. 
    By Fact~\ref{FACT:almost-balanced-sizes}~\ref{FACT:almost-balanced-sizes-2}, we have $a_r \ge 2$. 
    Fix an integer $k \in [r, \infty]$.
    
    \begin{claim}\label{CLAIM:large-opt-x0}
        For every $\bm{x} \in \OPT_{k}(F)$, we have $x_0 = 0$. 
    \end{claim}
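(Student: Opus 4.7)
The plan is to exploit the fact that, because $\ell \ge 2r$ and $F$ is almost balanced, Fact~\ref{FACT:almost-balanced-sizes}\ref{FACT:almost-balanced-sizes-2} forces $a_r \ge 2$, so $\sc(F) = 0$. Consequently, by~\eqref{equ:pF-x0-zero},
\[
    p_F(\bm{x}) = \kappa_F \cdot S_{a_1, \ldots, a_r}(\bm{x}),
\]
an expression that depends only on the coordinates $x_1, x_2, \ldots$ and is completely independent of $x_0$. This is the main conceptual difference with the regime $\ell \le 2r - 1$ (handled by Claim~\ref{CLAIM:OPT-x0-zero}), where $\sc(F) > 0$ and $x_0$ appears explicitly in~\eqref{equ:pF-general} and must be traded off against the $S_{a_1,\ldots,a_{r-i}}$ terms.

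For every finite $k$ in the allowed range, the assertion is immediate from the definitions: any such $\bm{x}$ lies in some $\mathbb{S}^{k-2}$, so $x_1 + \cdots + x_{k-1} = 1$ already forces $x_0 = 0$. Hence the only substantive case is $k = \infty$, i.e.\ $\bm{x} \in \OPT(F) = \OPT_{\infty}(F)$. Here I would argue by contradiction. Assume $x_0 > 0$ and form the renormalised sequence $\bm{y}$ defined by $y_i \coloneqq x_i/(1-x_0)$ for every $i \ge 1$. Then $\bm{y}$ inherits monotonicity and nonnegativity from $\bm{x}$ and satisfies $\sum_{i \ge 1} y_i = 1$, so $\bm{y} \in \overline{\mathcal{P}}$ with $y_0 = 0$. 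Using the degree-$\ell$ homogeneity of $S_{a_1, \ldots, a_r}$, one computes
\[
    p_F(\bm{y}) = \frac{\kappa_F \cdot S_{a_1, \ldots, a_r}(\bm{x})}{(1-x_0)^{\ell}} = \frac{p_F(\bm{x})}{(1-x_0)^{\ell}} > p_F(\bm{x}),
\]
which contradicts $\bm{x} \in \OPT(F)$ and completes the proof.

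Once the structural observation $\sc(F) = 0$ is in place, there is essentially no obstacle: the argument reduces to a one-line rescaling that simply removes the wasted universal-vertex mass. In retrospect, this cleanly explains the technical split in the proof of Theorem~\ref{THM:almost-balanced-asymptotic}: only when $\ell \le 2r - 1$ does $x_0$ genuinely appear in $p_F$, and that is precisely what forces the more delicate quadratic-optimisation argument carried out in Claim~\ref{CLAIM:OPT-x0-zero}.
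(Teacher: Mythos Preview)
Your proof is correct and takes essentially the same approach as the paper: both observe that $\sc(F)=0$ forces $p_F(\bm{x})=\kappa_F\cdot S_{a_1,\ldots,a_r}(\bm{x})$ independently of $x_0$, and then strictly increase $p_F$ by redistributing the mass $x_0$ among the remaining coordinates. The only cosmetic difference is that the paper dumps all of $x_0$ onto $x_1$ (setting $\tilde{x}_1=x_0+x_1$) and relies on termwise monotonicity of $S_{a_1,\ldots,a_r}$, whereas you rescale every coordinate by $1/(1-x_0)$ and invoke degree-$\ell$ homogeneity; both yield an immediate contradiction.
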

    \begin{proof}
        It follows from the definition of $\mathbb{S}^{k-1}$ that $x_0 = 0$ if $k \neq \infty$. Thus we may assume that $k = \infty$. 
        Suppose to the contrary that there exists $\bm{x} = (x_1, x_2, \ldots) \in \OPT(F)$ with $x_0 > 0$. 
        Since in every realization $G_{n,\bm{{x}}}$ (see Definition~\ref{DEF:realization}), no induced copy of $F$ can contain vertices from $V_0$ (because $a_1 \ge \cdots \ge a_r \ge 2$), we have  
        \begin{align*}
            p_{F}(\bm{x})
            = \kappa_{F} \cdot \sum_{(i_1, \ldots, i_{r}) \in (\mathbb{N}_{+})_{r}} x_{i_1}^{a_1} \cdots x_{i_{r}}^{a_r}. 
        \end{align*}
        Let $\tilde{\bm{x}}$ be the sequence obtained from $\bm{x}$ by replacing $x_1$ with $x_0+x_1$. 
        Note that $\tilde{\bm{x}} \in \overline{\mathcal{P}}$ and $p_{F}(\tilde{\bm{x}}) - p_{F}(\bm{x}) > 0$, a contradiction to the optimality of $\bm{x}$. 
    \end{proof}%CLAIM
    
    Fix an arbitrary optimal sequence $\bm{x} \in \OPT_{k}(F)$.
    By Claim~\ref{CLAIM:large-opt-x0}, we have $x_0 = 0$. 
    We aim to show that all nonzero entries of $\bm{x}$ are equal. 
    Suppose to the contrary that there exist $i, j \in \supp(\bm{x})$ with $x_i \neq x_j$. 
    
    Let $\bm{y}$ and $\bm{z}$ be the sequence in $\overline{\mathcal{P}}$ obtained from $\bm{x}$ by replacing $(x_i,x_j)$ with $(x_i+x_j,0)$ and $\left(\frac{x_i+x_j}{2}, 
    \frac{x_i+x_j}{2}\right)$, respectively, and then reordering the entries accordingly.
    Note that $y_0 = z_0 = x_0 = 0$ and $\{\bm{y}, \bm{z}\} \subseteq \mathbb{S}^{k-1}$.
    In addition, define $\tilde{\bm{x}}$ by replacing both $x_i$ and $x_j$ with $0$ and then reordering the entries.

    Let $\bm{a} \coloneqq (a_1, \ldots, a_r)$.
    Recall the definition of $S_{\bm{a}}(\bm{x})$ from \eqref{equ:def-S-polynomial}.
    We shall prove the following key inequality: 
    \begin{claim}\label{CLAIM:large-shifting-increase}
        We have 
        \begin{align}\label{equ:large-opt-key-inequality}
            S_{\bm{a}}(\bm{x}) 
            < \max \big\{ S_{\bm{a}}(\bm{y}),~S_{\bm{a}}(\bm{z})\}. 
        \end{align}
        Consequently, $p_{F}(\bm{x}) < \max \big\{ p_{F}(\bm{y}),~p_{F}(\bm{z})\}$.
    \end{claim}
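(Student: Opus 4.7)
The plan is to express $S_{\bm{a}}(\bm{x})$ explicitly as a polynomial in $x \coloneqq x_i$ and $y \coloneqq x_j$, with the other entries fixed, and then feed the result into the two inequalities of Proposition~\ref{PROP:a-b-ineq}. Partitioning the sum in~\eqref{equ:def-S-polynomial} by the number of indices $i_k$ that lie in $\{i,j\}$ (namely $0$, $1$, or $2$), I obtain
\begin{align*}
    S_{\bm{a}}(\bm{x})
    = C_0 + \sum_{s=1}^{r} A_s \bigl(x^{a_s}+y^{a_s}\bigr) + \sum_{1 \le s < t \le r} B_{s,t}\,\psi_{a_s,a_t},
\end{align*}
where $C_0, A_s, B_{s,t} \ge 0$ depend only on the entries of $\bm{x}$ other than $x_i, x_j$; explicitly, $A_s$ is the sum of $\prod_{k \ne s} x_{i_k}^{a_k}$ over $(i_k)_{k \ne s} \in (\mathbb{N}_{+}\setminus\{i,j\})_{r-1}$, and $B_{s,t}$ is analogous with two positions removed. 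Since $\bm{x} \in \OPT_{k+1}(F)$ forces $p_F(\bm{x}) > 0$ and hence $|\supp(\bm{x})| \ge r$, every $B_{s,t}$ is strictly positive.

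Setting $\beta \coloneqq x + y$ (which is preserved under both shifts), the elementary identities $x^{a_s} + y^{a_s} = \beta^{a_s} - \mu_{a_s} = 2(\beta/2)^{a_s} + \omega_{a_s}$ and $\psi_{a_s,a_t} = 2(\beta/2)^{a_s+a_t} - \phi_{a_s,a_t}$ give
\begin{align*}
    S_{\bm{a}}(\bm{y}) - S_{\bm{a}}(\bm{x}) &= \sum_{s=1}^{r} A_s \mu_{a_s} - \sum_{s<t} B_{s,t}\,\psi_{a_s,a_t}, \\
    S_{\bm{a}}(\bm{z}) - S_{\bm{a}}(\bm{x}) &= -\sum_{s=1}^{r} A_s \omega_{a_s} + \sum_{s<t} B_{s,t}\,\phi_{a_s,a_t}.
\end{align*}
Since $\ell \ge 2r$ forces $a_r \ge 2$ by Fact~\ref{FACT:almost-balanced-sizes}~\ref{FACT:almost-balanced-sizes-2}, and $x \ne y$, both $\mu_{a_r}$ and $\omega_{a_r}$ are strictly positive, so the ratio $\omega_{a_r}/\mu_{a_r}$ is a well-defined positive quantity.

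Now I would argue by contradiction, assuming both displayed differences are $\le 0$. The first inequality in Proposition~\ref{PROP:a-b-ineq}, applied with $a = a_r$ and each $a_s \ge a_r$ as $t$, gives $\omega_{a_s} \le (\omega_{a_r}/\mu_{a_r})\mu_{a_s}$ for every $s$, hence
\begin{align*}
    \sum_{s} A_s \omega_{a_s}
    \le \frac{\omega_{a_r}}{\mu_{a_r}} \sum_{s} A_s \mu_{a_s}
    \le \frac{\omega_{a_r}}{\mu_{a_r}} \sum_{s<t} B_{s,t}\,\psi_{a_s,a_t},
\end{align*}
where the final step uses the first assumed nonpositivity. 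Combining with the second assumed nonpositivity $\sum_{s<t} B_{s,t}\,\phi_{a_s,a_t} \le \sum_{s} A_s \omega_{a_s}$ yields
\begin{align*}
    \sum_{s<t} B_{s,t}\,\phi_{a_s,a_t}
    \le \frac{\omega_{a_r}}{\mu_{a_r}} \sum_{s<t} B_{s,t}\,\psi_{a_s,a_t}.
\end{align*}
But the strict inequality in Proposition~\ref{PROP:a-b-ineq}, applied to each pair $(a_s,a_t) \subseteq [a_r,a_1]$ (with the almost balanced hypothesis $\binom{a_1-a_r}{2} < a_r$ supplying the required $a > \binom{b-a}{2}$), gives $\phi_{a_s,a_t} > (\omega_{a_r}/\mu_{a_r})\,\psi_{a_s,a_t}$; weighting these strict inequalities by the positive constants $B_{s,t}$ and summing produces the opposite strict inequality, a contradiction. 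Hence at least one of $S_{\bm{a}}(\bm{y}) - S_{\bm{a}}(\bm{x})$ and $S_{\bm{a}}(\bm{z}) - S_{\bm{a}}(\bm{x})$ is strictly positive, establishing~\eqref{equ:large-opt-key-inequality}; the consequence $p_F(\bm{x}) < \max\{p_F(\bm{y}), p_F(\bm{z})\}$ then follows from~\eqref{equ:pF-x0-zero} since $x_0 = y_0 = z_0 = 0$. The main obstacle is already bundled into Proposition~\ref{PROP:a-b-ineq}; given it, the argument here amounts to a careful accounting of signs and to applying the ratio inequalities in just the right order, with the almost balanced condition entering exactly where needed to guarantee the strict second inequality.
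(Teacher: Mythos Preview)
Your proof is correct and follows essentially the same approach as the paper's: both decompose $S_{\bm a}(\bm x)$ according to how many of the two distinguished coordinates appear, assume for contradiction that both differences $S_{\bm a}(\bm y)-S_{\bm a}(\bm x)$ and $S_{\bm a}(\bm z)-S_{\bm a}(\bm x)$ are nonpositive, and then derive a contradiction from the two inequalities in Proposition~\ref{PROP:a-b-ineq}. The only cosmetic difference is that the paper eliminates the $A$-sums in one stroke by taking the linear combination $-\omega_{a_r}\times(\text{first}) - \mu_{a_r}\times(\text{second})$, whereas you achieve the same elimination by chaining the ratio bound $\omega_{a_s}/\mu_{a_s}\le \omega_{a_r}/\mu_{a_r}$ through the two assumed inequalities; these are equivalent manipulations.
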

    \begin{proof}
        For $p, q \in [r]$, let $\bm{a}^{p}$ be the $(r-1)$-tuple obtained from $\bm{a}$ by removing the element $a_{p}$; let $\bm{a}^{p,q}$ be the $(r-2)$-tuple obtained from $\bm{a}$ by removing the elements $a_{p}$ and $a_{q}$; 
        and let 
        \begin{align*}
            A_{p}
            \coloneqq S_{\bm{a}^{p}}(\tilde{\bm{x}})
            \quad\text{and}\quad 
            B_{p,q} 
            \coloneqq S_{\bm{a}^{p,q}}(\tilde{\bm{x}}).
        \end{align*}
        It is clear that $|\supp(\bm{x})| \ge r$. 
        Thus, $|\supp(\tilde{\bm{x}})| \ge r-2$, and hence, $B_{p,q} > 0$ for all $\{p, q\} \subseteq [r]$. 
        Define 
        \begin{alignat*}{2}
            \mu_{p} & \coloneqq (x_i + x_j)^{a_{p}} - x_i^{a_{p}} - x_j^{a_{p}}, \qquad
            & \omega_{p} & \coloneqq x_i^{a_{p}} + x_j^{a_{p}} - 2\left(\tfrac{x_i+x_j}{2}\right)^{a_{p}}, \\
            \psi_{p, q} & \coloneqq x_i^{a_p} x_j^{a_q} + x_i^{a_q} x_j^{a_p}, \qquad
            & \phi_{p, q} & \coloneqq 2\left(\tfrac{x_i+x_j}{2}\right)^{a_p + a_q} - x_i^{a_p} x_j^{a_q} - x_i^{a_q} x_j^{a_p}. 
        \end{alignat*}
        Suppose to the contrary that this claim fails. 
        Then we have 
        \begin{align}
            0 
            \leq S_{\bm{a}}(\bm{x}) - S_{\bm{a}}(\bm{y})
            % & = -\sum_{p\in [r]} \left( (x_i + x_j)^{a_p} - x_i^{a_p} - x_j^{a_p} \right) A_p + \sum_{\{p,q\} \in \binom{[r]}{2}} \left( x_i^{a_p} x_j^{a_q} + x_i^{a_q} x_j^{a_p} \right) B_{p,q} \notag \\
            & = -\sum_{p\in [r]} \mu_{p} A_p + \sum_{\{p,q\} \in \binom{[r]}{2}} \psi_{p,q} B_{p,q},  \label{eq:ineq1} \\[.3em]
            0 
            \leq S_{\bm{a}}(\bm{x}) - S_{\bm{a}}(\bm{z})
            % &= \sum_{p\in [r]} \left( x_i^{a_p} + x_j^{a_p} - 2\left(\tfrac{x_i + x_j}{2}\right)^{a_p} \right) A_p \notag \\
            % &\qquad - \sum_{\{p,q\} \in \binom{[r]}{2}} \left( 2\left(\tfrac{x_i + x_j}{2}\right)^{a_p + a_q} - x_i^{a_p} x_j^{a_q} - x_i^{a_q} x_j^{a_p} \right) B_{p,q} \notag \\
            & = \sum_{p\in [r]} \omega_{p} A_{p} - \sum_{\{p,q\} \in \binom{[r]}{2}} \phi_{p,q} B_{p,q}. \label{eq:ineq2}
        \end{align}
        Considering the linear combination $-\omega_{r} \times~\eqref{eq:ineq1} - \mu_{r} \times~\eqref{eq:ineq2}$, we obtain 
        \begin{align}\label{eq:point-of-contradiction}
            \sum_{p\in [r]} (\mu_{p} \omega_{r} - \mu_{r} \omega_{p}) A_p + \sum_{\{p,q\} \in \binom{[r]}{2}} (\mu_{r} \phi_{p,q} - \omega_{r} \psi_{p, q}) B_{p,q} \leq 0.
        \end{align}

        % However, by Proposition~\ref{PROP:a-b-ineq}, we have 
        % \begin{align*}
        %     \mu_{p} \omega_{r} - \mu_{r} \omega_{p} \geq 0 \quad \mbox{ and } \quad \mu_{r} \phi_{p,q} - \omega_{r} \psi_{p, q} > 0.
        % \end{align*}
        However, it follows from~\eqref{eq:ratio-inequality-b} and $B_{p,q} > 0$ that 
        \begin{align*}
            \sum_{p\in [r]} (\mu_{p} \omega_{r} - \mu_{r} \omega_{p}) A_p + \sum_{\{p,q\} \in \binom{[r]}{2}} (\mu_{r} \phi_{p,q} - \omega_{r} \psi_{p, q}) B_{p,q}
            > 0, 
        \end{align*}
        a contradiction to~\eqref{eq:point-of-contradiction}. 
    \end{proof}%CLAIM 

    It follows from Claim~\ref{CLAIM:large-shifting-increase} and the optimality of $\bm{x}$ that $\bm{x}$ must be balanced. 
    Suppose that $|\supp(\bm{x})| = \hat{k}$ for some integer $\hat{k} \le k$. Then $\bm{x} = \bm{\hat{k}} = (1/\hat{k}, \ldots, 1/\hat{k}, 0, \ldots)$, and hence, $p_{F}(\bm{x}) = \kappa_{F} \cdot {(\hat{k})_{r}}/{\hat{k}^{r}}$. 
    It follows from Lemma~\ref{Lemma:Unique-Maximizer} and the maximality of $p_{F}(\bm{x})$ that $\hat{k} = m$ if $k \ge m$ and $\hat{k} = k$ if $k \le m-1$. 
    This completes the proof of Theorem~\ref{THM:almost-balanced-asymptotic} for $\ell \ge 2r$.
\end{proof}%THM

%%%%%%%%%%%%%%%%%%%%%%%%%%%%%%%%%%%%%%%%%%%%
\section{Perfect stability}\label{Sec:Stablity}
\subsection{Preparations}
In this subsection, we briefly review the sufficient conditions for perfect stability in the framework developed by~\cite{LiuPikhurko23}. Their framework applies to the broader class of “symmetrizable” functions, but here we focus specifically on the inducibility problem for complete multipartite graphs $F$. Very roughly, \cite[Theorem 1.1]{LiuPikhurko23} asserts that if $\OPT(F)$ is finite and each maximizer $\bm{x} \in \OPT(F)$ satisfies certain \emph{strictness} conditions, then the inducibility problem for $F$ is perfectly stable. 
% Moreover, in such cases, the non-singleton parts of any extremal graph have sizes asymptotically proportional to the nonzero entries of some $\bm{x} \in \OPT$, together with approximately $x_0 n$ singleton parts. 

% For convenience, we restrict our attention to the case where all maximizers are good (i.e., $x_0 = 0$ and $|\supp(\bm{x})| < \infty$). 
For convenience, we will use the following simplified variant of~\cite[Theorem 1.1]{LiuPikhurko23} (see also~\cite[Theorem 8.2]{Basit25}).

For a graph $G$ and a pair $\{x,y\} \subseteq V(G)$ of vertices, let $G \oplus xy$ denote the graph with vertex set $V(G)$ and edge set $E(G) \bigtriangleup \{xy\}$.
That is, $G \oplus \{xy\}$ is obtained by adding the edge $\{x,y\}$ if it is not present in $G$, and by removing it otherwise.
Suppose that $G$ is a complete $t$-partite graph with parts $V_1,\ldots,V_t$.
For every $A \subseteq [t]$, let $G_A$ denote the graph obtained from $G$ by adding a new vertex that is adjacent precisely to all vertices in $\bigcup_{i \in A} V_i$.

\begin{theorem}\label{THM:Stability-Verification}
    Let $F$ be a complete multipartite graph with $\ell$ vertices. 
    Suppose that $\OPT(F)$ contains a unique sequence $\bm{x}$ with $x_0 = 0$ (i.e. $\sum_{i\in \mathbb{N}_{+}}x_i = 1$) and $t \coloneqq |\supp(\bm{x})| < \infty$.
    Suppose that there exist constants $\varepsilon > 0$ and $N_0$ such that for every $n \ge N_0$ the following statements hold for the $n$-vertex realization $G \coloneqq G_{n,\bm{x}}$ with parts $V_1, \ldots, V_t \colon$ 
    \begin{enumerate}[label=(S\arabic*), ref=S\arabic*]
        \item\label{cond:S1} For every pair $\{x,y\} \subseteq V(G)$, we have 
            \begin{align*}
                I(F, G) - I(F, G \oplus xy) 
                \geq \varepsilon n^{\ell-2}.
            \end{align*}
        \item\label{cond:S2} For every $A \subseteq [t]$ with $|A| \neq t-1$, we have 
            \[
               \min\big\{ I(F, G_{A^{\ast}}) \colon A^{\ast} \subseteq [t] \text{ and } |A^{\ast}| = t-1 \big\} - I(F, G_A) 
                \geq \varepsilon n^{\ell-1}.
            \]
    \end{enumerate}
    Then the inducibility problem for $F$ is perfectly stable.
\end{theorem}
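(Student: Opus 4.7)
The plan is to deduce Theorem~\ref{THM:Stability-Verification} as a specialization of the general perfect stability framework of~\cite{LiuPikhurko23}, whose abstract hypotheses are precisely encoded by the concrete conditions here. That framework applies whenever the underlying extremal problem is solvable by Zykov symmetrization --- which is the case for complete multipartite $F$ by Lemma~\ref{Lemma:Sidorenko-Extremal-Partite} --- and reduces perfect stability to three ingredients: finiteness of $\OPT(F)$, edge-flip strictness at each element of $\OPT(F)$, and single-vertex-insertion strictness at each element of $\OPT(F)$. The first is given since $\OPT(F) = \{\bm{x}\}$ is a singleton supported on finitely many coordinates with $x_0 = 0$. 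The second and third are the quantitative conditions (S1) and (S2), with the required scales $\varepsilon n^{\ell-2}$ and $\varepsilon n^{\ell-1}$ already built into their statements.

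The first stage is a quantitative reduction to the complete multipartite case. Given an $n$-vertex graph $G'$ with loss $L \coloneqq I(F,n) - I(F, G')$, I would iteratively perform Zykov symmetrization to build a sequence $G' = G^{(0)}, G^{(1)}, \ldots, G^{(s)} = H$ in which $H$ is complete multipartite and each step satisfies $I(F, G^{(i+1)}) \ge I(F, G^{(i)})$. To upgrade this qualitative monotonicity to an edit-distance bound, one establishes a quantitative Brown--Sidorenko estimate: when a step replaces $N(v)$ by $N(u)$ and $|N(u) \triangle N(v)| = k$, the gain $I(F, G^{(i+1)}) - I(F, G^{(i)})$ is at least a constant multiple of $k \cdot n^{\ell-3}$, obtained by extending each discrepant witness in $N(u) \triangle N(v)$ to an induced $F$-copy on the remaining vertices. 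Telescoping the gains yields
\[
    \mathrm{edit}(G', H) \le C_1 \cdot \binom{n}{2} \cdot \frac{I(F,H) - I(F,G')}{\binom{n}{\ell}} \le C_1 \cdot \binom{n}{2} \cdot \frac{L}{\binom{n}{\ell}}.
\]

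The second stage is rigidity within the complete multipartite class: I would argue that any complete multipartite $H$ with loss at most $L$ is already edit-close to $G_{n,\bm{x}}$. Condition (S2) forces $H$ to have exactly $t$ parts when $n$ is large, since modifying the part count (or the attachment pattern of a single vertex away from the $|A| = t-1$ canonical one) costs $\Omega(n^{\ell-1})$ induced $F$-copies, exceeding $L$ unless $L$ itself is so large that the trivial bound $\mathrm{edit}(H, G_{n,\bm{x}}) \le \binom{n}{2}$ already suffices. Condition (S1) then pins down the part sizes: choose an edit-distance geodesic between $H$ and $G_{n,\bm{x}}$ realized by successive relocations of single vertices between parts, where every single edge flip along the way costs $\Omega(n^{\ell-2})$ copies, so that the total number of flips, and hence $\mathrm{edit}(H, G_{n,\bm{x}})$, is bounded by $C_2 \cdot \binom{n}{2} \cdot L/\binom{n}{\ell}$. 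Combining the two stages via the triangle inequality gives the required perfect stability bound with the complete multipartite witness $H = G_{n,\bm{x}}$.

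The main obstacle is the quantitative symmetrization estimate in the first stage: the qualitative ``non-decrease'' statement is easy, but obtaining a sharp per-edge gain that matches the target edit-distance rate demands a careful double-counting argument, and this is the technical heart of~\cite{LiuPikhurko23} that one either invokes as a black box or adapts to the present setting. A secondary obstacle is correctly interleaving the two scales $\Omega(n^{\ell-2})$ and $\Omega(n^{\ell-1})$ present in (S1) and (S2), so that the combined edit-distance bound has the correct linear dependence on $L$ and does not accumulate logarithmic or polynomial overheads from the geodesic construction.
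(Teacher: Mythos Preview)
The paper does not prove this theorem; it is quoted verbatim as a simplified variant of \cite[Theorem~1.1]{LiuPikhurko23} (see also \cite[Theorem~8.2]{Basit25}) and used as a black box. Your top-level plan --- recognize that Brown--Sidorenko makes the inducibility problem symmetrizable, note that $\OPT(F)=\{\bm{x}\}$ is a singleton with finite support and $x_0=0$, and then match (S1)--(S2) to the abstract strictness hypotheses of that framework --- is exactly what the paper does, so at that level your proposal is correct and aligned with the paper.

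Where your sketch goes wrong is in your attempted outline of the internals of the \cite{LiuPikhurko23} argument. In Stage~1 you claim that a single Zykov step replacing $N(v)$ by $N(u)$ with $|N(u)\triangle N(v)|=k$ yields a gain of order $k\,n^{\ell-3}$ in induced $F$-copies. This is false in general: symmetrization is only \emph{weakly} monotone for $I(F,\cdot)$, and one can have $I(F,G^{(i+1)})=I(F,G^{(i)})$ while $k$ is large (twins with respect to induced $F$-counts need not be twins in the graph). Consequently the telescoping bound you write for $\mathrm{edit}(G',H)$ does not follow. The actual mechanism in \cite{LiuPikhurko23} is different: one does not track edit distance through the symmetrization process at all. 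Instead one first uses compactness/uniqueness of $\OPT(F)$ to force any near-extremal $G'$ to already be structurally close to a blowup of the optimizer, and then the strictness conditions (your (S1) and (S2)) are applied \emph{directly to $G'$}, vertex by vertex and edge by edge, to convert the $I(F,\cdot)$-deficit into an edit-distance bound. Your Stage~2 is closer in spirit to this, but it is doing the work for both stages, not just the second. If you intend to invoke \cite{LiuPikhurko23} as a black box --- which is all the paper does --- that is fine; but the two-stage decomposition you wrote is not how that black box is built.
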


We also need the following refined estimates for $m_{r,\ell}$, the integer given by Lemma~\ref{Lemma:Unique-Maximizer}. 
\begin{lemma}\label{Lemma:m-bounds}
    Let $\ell > r \ge 2$ be integers and let $m=m_{r, \ell}$. 
    Then 
    \begin{enumerate}[label=(\roman*)]
        \item\label{Lemma:m-lower bound} $m > \max \left \{\frac{\ell(r-1)}{2(\ell-r)},~r-1 \right\}$, and  
        \item\label{Lemma:m-upper-bound} $m < \min \left\{  \frac{\ell(\ell-1)}{2(\ell-r)},~\frac{r}{\alpha}  \right\}$, where $\alpha \coloneqq \alpha(\ell/r) \in ( 1- r^2/\ell^2 , 1)$ is the unique positive real root of the equation $\mathrm{e}^{(\ell/r)\cdot x}(1-x)=1$. 
        In particular, $m < \frac{r\ell^2}{\ell^2-r^2}$. 
        % \begin{align*}
        %     m < \frac{r\ell^2}{\ell^2-r^2}. 
        % \end{align*}
    \end{enumerate}
\end{lemma}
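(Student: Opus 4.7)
The plan is to work with the smooth extension $\tilde f$ introduced in the proof of Lemma~\ref{Lemma:Unique-Maximizer}, whose unique real maximizer $x^{\ast}$ on $(r-1,\infty)$ is characterized by $\tilde g(x^{\ast})=0$, equivalently $\sum_{i=1}^{r-1}\tfrac{i}{x^{\ast}-i}=\ell-r$. Since $f$ is discrete-unimodal with unique integer maximizer $m$, we have $m\in\{\lfloor x^{\ast}\rfloor,\lceil x^{\ast}\rceil\}$, so $|m-x^{\ast}|<1$; this will bridge continuous bounds on $x^{\ast}$ with integer bounds on $m$.

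For part~(i), $m>r-1$ is immediate from $m\geq r$. For $m>\tfrac{\ell(r-1)}{2(\ell-r)}$, I apply strict Jensen to the strictly convex function $i\mapsto\tfrac{x^{\ast}}{x^{\ast}-i}$ on $\{1,\dots,r-1\}$ (mean index $r/2$):
\[
\frac{\ell-1}{r-1}=\frac{1}{r-1}\sum_{i=1}^{r-1}\frac{x^{\ast}}{x^{\ast}-i}>\frac{x^{\ast}}{x^{\ast}-r/2},
\]
which rearranges to $x^{\ast}>\tfrac{r(\ell-1)}{2(\ell-r)}=\tfrac{\ell(r-1)}{2(\ell-r)}+\tfrac12$. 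Combined with $m>x^{\ast}-1$ and a short case analysis on the fractional part of $\tfrac{\ell(r-1)}{2(\ell-r)}$ (or a quantitative second-order strengthening of Jensen using $\phi''$ and the variance of $\{1,\dots,r-1\}$) this yields the stated bound.

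For the first half of~(ii), the crude bound $\tfrac{i}{x^{\ast}-i}\leq\tfrac{i}{x^{\ast}-r+1}$ summed over $i$ gives $x^{\ast}\leq\tfrac{(r-1)(2\ell-r)}{2(\ell-r)}$, so $m<x^{\ast}+1\leq\tfrac{r(2\ell-r-1)}{2(\ell-r)}$, and the algebraic identity $\ell(\ell-1)-r(2\ell-r-1)=(\ell-r)(\ell-r-1)\geq 0$ finishes $m<\tfrac{\ell(\ell-1)}{2(\ell-r)}$. For $m<r/\alpha$, I use the strict Riemann-sum bound $\sum_{i=1}^{r-1}\tfrac{1}{x-i}<\int_{1}^{r}\tfrac{dt}{x-t}=\ln\tfrac{x-1}{x-r}$ (since $t\mapsto 1/(x-t)$ is strictly increasing). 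Multiplying by $x$, evaluating at $x=r/\alpha$, and invoking the defining identity $\ln(1-\alpha)=-\ell\alpha/r$ simplifies the right side to
\[
\frac{r}{\alpha}\ln\frac{r/\alpha-1}{r/\alpha-r}=\ell+\frac{r}{\alpha}\ln(1-\alpha/r)<\ell-1,
\]
where the last step uses $\ln(1-\alpha/r)<-\alpha/r$. Hence $\tilde g(r/\alpha)<0$ and $x^{\ast}<r/\alpha$. The enclosure $\alpha>1-r^{2}/\ell^{2}$ follows from analyzing $\beta\mapsto\ln\beta-(\ell/r)(\beta-1)$ (with $\beta=1-\alpha$), which is strictly concave with a zero at $\beta=1$ and is nonnegative at $\beta=r^{2}/\ell^{2}$ by the elementary inequality $t-1/t\geq 2\ln t$ for $t\geq 1$ (applied with $t=\ell/r$); so the smaller zero $1-\alpha$ lies below $r^{2}/\ell^{2}$, and the ``in particular'' claim $m<r\ell^{2}/(\ell^{2}-r^{2})$ is immediate from $r/\alpha<r/(1-r^{2}/\ell^{2})$.

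The main obstacle is the generic $\pm 1$ slack between $x^{\ast}$ and $m$ in the $r/\alpha$ bound: the continuous inequality $x^{\ast}<r/\alpha$ only gives $m\leq\lceil x^{\ast}\rceil$, which could in principle exceed $r/\alpha$. To absorb this I would sharpen the Riemann-sum step to the midpoint form $\sum<\ln\tfrac{x-1/2}{x-r+1/2}$ (using strict convexity of $1/(x-t)$ rather than mere monotonicity), which buys enough extra slack in $\tilde g$ to push $x^{\ast}$ at least one integer unit below $r/\alpha$; alternatively one can verify directly that $\psi(k):=f(k+1)/f(k)<1$ for every integer $k\geq\lfloor r/\alpha\rfloor$ by substituting into the strict chain above. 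A parallel refinement handles the delicate integer-rounding step in~(i).
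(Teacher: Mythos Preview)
Your route through the continuous maximizer $x^{\ast}$ of $\tilde f$ is natural, and several pieces are correct: the Jensen step gives $x^{\ast}>\tfrac{\ell(r-1)}{2(\ell-r)}+\tfrac12$, the crude bound $\tfrac{i}{x^{\ast}-i}\le\tfrac{i}{x^{\ast}-r+1}$ together with $m<x^{\ast}+1$ does yield $m<\tfrac{\ell(\ell-1)}{2(\ell-r)}$ (the extra $+1$ is absorbed by the identity $\ell(\ell-1)-r(2\ell-r-1)=(\ell-r)(\ell-r-1)\ge 0$), and your concavity argument for $\alpha>1-r^{2}/\ell^{2}$ is fine.

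The genuine gap is exactly the one you flag: for the bounds $m>\tfrac{\ell(r-1)}{2(\ell-r)}$ and $m<r/\alpha$, the slack $|m-x^{\ast}|<1$ is not recoverable by the fixes you sketch. Concretely, from $x^{\ast}>c+\tfrac12$ and $m\ge\lfloor x^{\ast}\rfloor$ you cannot conclude $m>c$: if $c=5.4$ and $x^{\ast}=5.91$ then $\lfloor x^{\ast}\rfloor=5<c$, and nothing in your argument rules out $m=\lfloor x^{\ast}\rfloor$. A quantitative Jensen correction only improves the bound on $x^{\ast}$ by $O\bigl(\tfrac{1}{x^{\ast}}\cdot\mathrm{Var}\bigr)$-type terms, which need not buy a full integer unit uniformly in $r,\ell$. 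The midpoint Riemann-sum refinement for $m<r/\alpha$ faces the same issue: you would need $x^{\ast}<r/\alpha-1$, not merely $x^{\ast}<r/\alpha$, and the midpoint improvement is again of lower order.

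The paper sidesteps this entirely by never passing through $x^{\ast}$. Unimodality plus uniqueness give the \emph{discrete} inequalities $f(m)>f(m+1)$ and (when $m>r$) $f(m)>f(m-1)$ directly; these translate to
\[
\Bigl(1-\tfrac{1}{m+1}\Bigr)^{\ell}<1-\tfrac{r}{m+1}
\qquad\text{and}\qquad
1-\tfrac{r}{m}<\Bigl(1-\tfrac{1}{m}\Bigr)^{\ell},
\]
respectively. For~(i) the paper analyzes the convex function $g(x)=(1-x)^{\ell}+rx-1$: since $g(0)=0$ and $g\bigl(\tfrac{1}{m+1}\bigr)<0$, it suffices to check $g\bigl(\tfrac{2(\ell-r)}{r\ell+\ell-2r}\bigr)\ge 0$, which is done by an explicit calculus argument. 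For~(ii) the second displayed inequality combined with $(1-1/m)^{\ell}\le e^{-\ell/m}$ gives $1-\theta<e^{-t\theta}$ with $\theta=r/m$, $t=\ell/r$; unimodality of $x\mapsto e^{tx}(1-x)-1$ then forces $\theta>\alpha$. No rounding ever enters because the starting inequalities already sit at the integer $m$. If you want to rescue your approach, the cleanest patch is to replace the $x^{\ast}$-bounds by exactly these two ratio conditions; your ``verify $\psi(k)<1$ for $k\ge\lfloor r/\alpha\rfloor$'' suggestion is essentially this.
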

\begin{proof}
    First, we prove~\ref{Lemma:m-lower bound}. 
    Since $m \ge r$ holds trivially, it suffices to show that $m > \frac{\ell(r-1)}{2(\ell-r)}$, equivalently, $\frac{1}{m+1} < \frac{2(\ell-r)}{r\ell+\ell-2r}$. 
    Note that $\frac{\ell(r-1)}{2(\ell-r)}$ is decreasing in $\ell > r$, and when $\ell=2r$, we have $\frac{\ell(r-1)}{2(\ell-r)} < r$. 
    Thus we may assume that $r<\ell < 2r$.

    By Lemma~\ref{Lemma:Unique-Maximizer}, the discrete function $f$ defined in~\eqref{equ:def-fk} is unimodal.
    Therefore, the optimality of $m$ implies that $f(m) > f(m+1)$, which yields
    \begin{align*}
        \left(1 - \frac{1}{m+1} \right)^{\ell} < 1 - \frac{r}{m+1}.
    \end{align*}
    
    Define $g(x) \coloneqq (1-x)^{\ell} + rx - 1$ for $x \in [0,1]$. 
    Then the inequality above implies that $g\left(\tfrac{1}{m+1}\right) < 0$. 
    
    Straightforward calculations show that the second derivative of $g$ is $g''(x) = \ell(\ell-1)(1-x)^{\ell-2}$, which is nonnegative for $x\in [0,1]$.
    Thus, the function $g$ is convex on the interval $[0,1]$. 
    Since $g(0) = 0$ and $g\left(\frac{1}{m+1}\right) < 0$, to show that $\frac{1}{m+1} < \frac{2(\ell-r)}{r\ell+\ell-2r}$, it suffices to prove the following claim. 

    \begin{claim}\label{Claim:m-lower-r-s}
        We have $g\left( \frac{2(\ell-r)}{r\ell+\ell -2r} \right) \ge 0$. 
    \end{claim}
    \begin{proof}%[Proof of Claim~\ref{Claim:m-lower-r-s}]
        It is equivalent to show that 
        \begin{align*}
            \left(\frac{(r-1)\ell}{(r+1)\ell -2r}\right)^{\ell} 
            = \left(1 - \frac{2(\ell-r)}{r\ell+\ell -2r}\right)^{\ell} 
            \geq 1 - \frac{2r(\ell-r)}{r\ell+\ell -2r}
            = \frac{(2r-\ell)(r-1)}{(r+1)\ell-2r}.
        \end{align*}
        Taking logarithms on both sides and then rearranging the terms, this simplifies to showing that
        \begin{align}\label{eq:m-lower bound}
            \ell\ln \ell + (\ell-1)\ln(r-1) - (\ell-1)\ln\big((r+1)\ell-2r\big) - \ln(2r-\ell) 
            \geq 0. 
        \end{align}

        Define 
        \begin{align*}
            \vartheta(x) 
            \coloneqq x\ln x + (x-1)\ln(r-1) - (x-1)\ln\big( (r+1)x-2r \big) - \ln(2r-x), \quad\text{for}\quad x \in [r,2r). 
        \end{align*}
        Straightforward calculations show that 
        \begin{align*}
            \vartheta(r) = \vartheta'(r) = 0
            \quad\text{and}\quad
            \vartheta''(x) 
            = \frac{4r^2 (x-r)\bigl((r+2)x-4r\bigr)}{\bigl((r+1)x-2r\bigr)^2(2r-x)^2x}.
        \end{align*}
        For $x \in [r, 2r)$ and $r \ge 2$, we have $(r+2)x - 4r \ge (r+2)r - 4r = r(r-2) \ge 0$. 
        Hence $\vartheta''(x) \geq 0$.
        It follows that $\vartheta(x) \geq 0$ for all $x \in [r,2r)$, which implies~\eqref{eq:m-lower bound}. This completes the proof of Claim~\ref{Claim:m-lower-r-s}.
    \end{proof}%CLAIM

    \medskip 
    
    It remains to prove~\ref{Lemma:m-upper-bound}. Let $t \coloneqq \ell / r$ and $h_t(x) \coloneqq \mathrm{e}^{tx}(1-x) - 1$. 
    First we show that the real number $\alpha$ defined in the lemma lies in the interval $(1-1/t^2, 1)$.
    Note that the derivative $h_t'(x) = \mathrm{e}^{tx}(t - 1 - tx)$ is positive when $x < 1 - 1/t$ and negative when $x > 1- 1/t$. 
    Thus, $h_t$ is unimodal on $(0, \infty)$. 
    Since $h_t(0) = 0$ and $h_t(1) = -1$, the equation $h_t(x) = 1$ has a unique positive root (i.e. $\alpha$), which lies in the interval $(0, 1)$. 

    To show that $\alpha >1-1/t^2$, it suffices to verify that the following quantity is positive:
    \begin{align*}
        h_t(1-1/t^2) = \frac{1}{t^2} \mathrm{e}^{t-\frac{1}{t}} - 1.
    \end{align*}
    Let $u(t) \coloneqq \mathrm{e}^{t-\frac{1}{t}}/t^2$. Since the derivative $u'(t)=u(t)(1-1/t)^2$ is positive for $t>1$, the function $u$ is strictly increasing on $(1,\infty)$. Consequently, $h_t(1-1/t^2) = u(t)-1 > u(1)-1=0$ for $t > 1$, as desired.
    This proves that $\alpha \in (1-1/t^2, 1)$. 
    
    Next, we prove the upper bound for $m$. 
    Since $\alpha < 1$ and straightforward calculations show that $\frac{\ell(\ell-1)}{2(\ell-r)} > r$ for all $\ell > r$, we may assume that $m > r$ (otherwise, we are done).

    By Lemma~\ref{Lemma:Unique-Maximizer}, the discrete function $f$ defined in~\eqref{equ:def-fk} is unimodal.
    Therefore, the optimality of $m$ implies that $f(m) > f(m-1)$, which yields 
    \begin{align}\label{eq:m-upper-bound}
        \frac{m-r}{m} < \left( 1- \frac{1}{m} \right)^{\ell}.
    \end{align}
    Combining it with the inequality 
    \begin{align*}
        \left( 1- \frac{1}{m} \right)^{\ell}\leq1-\frac{\ell}{m}+\binom{\ell}{2}\left(\frac{1}{m}\right)^2, 
    \end{align*}
    we obtain $m <  \frac{\ell(\ell-1)}{2(\ell-r)}$. 

    It remains to prove that $m < r/\alpha$. 
    Define $\theta \coloneqq r/m$. 
    Then it follows from~\eqref{eq:m-upper-bound} and the inequality $\ln(1+x) \le x$ for $x > -1$ that 
    \begin{align*}
        1-\theta 
        = \frac{m-r}{m}
        < \left( 1- \frac{1}{m} \right)^{\ell} 
        = \exp\left(\ell \ln\left(1-\frac{1}{m}\right) \right) 
        \le \exp\left(-\frac{\ell}{m}\right) 
        = \mathrm{e}^{-t\theta}.
    \end{align*}
    In other words, we have $h_t(\theta)< 0$.
    Since $h_t$ is unimodal on $(0,\infty)$ and $h_{t}(0) = 0$, we have $\theta > \alpha$, which implies that $m < r/\alpha$.
    This proves~\ref{Lemma:m-upper-bound}, and thus completing the proof of Lemma~\ref{Lemma:m-bounds}.
\end{proof}%LEMMA

%%%%%%%%%%%%%%%%%%%%%%%%%%%%%%%%%%%
\subsection{Proof of Theorem~\ref{THM:almost-balanced-perfect-stability} for $\ell < 2r$}\label{SUBSEC:proof-perfect-stability-small}
In this subsection, we present the proof of Theorem~\ref{THM:almost-balanced-perfect-stability} for $\ell < 2r$. 

\begin{lemma}\label{LEMMA:stable-small-inequality}
    Let $\ell, r$ be integers such that $r\ge 2$ and $r+1 \le \ell \le 2r-1$. 
    Let $m = m_{r,\ell}$ be the integer given by Lemma~\ref{Lemma:Unique-Maximizer}. 
    Then the discrete function $h \colon [0, m] \to \mathbb{Z}$ defined by 
    \begin{align*}%\label{eq:def-h(q)}
        h(q)
        \coloneqq (q)_{r-1}\cdot \bigl(2r-\ell+2(m-q)(\ell-r)\bigr), 
        \quad\text{for every}\quad q \in [0, m], 
        % \begin{cases}
        %     0, & \quad\text{if}\quad q < r-1, \\
        %     (q)_{r-1}\cdot \bigl(2r-\ell+2(m-q)(\ell-r)\bigr), & \quad\text{if}\quad q \in [r-1, m], 
        % \end{cases}
    \end{align*}
    satisfies $h(q) < h(m-1)$ for all integers $q \in [0, m] \setminus \{m-1\}$. 
\end{lemma}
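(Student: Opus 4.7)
The strategy is to show that $h$ vanishes on $\{0,1,\ldots,r-2\}$, is strictly increasing on $\{r-1, r, \ldots, m-1\}$, and strictly decreases from $q = m-1$ to $q = m$, so that $q = m-1$ is the unique integer maximizer of $h$ on $[0,m]$. The first claim is immediate since $(q)_{r-1} = 0$ for $q \in [0, r-2]$, while
$$h(m-1) \;=\; (m-1)_{r-1} \cdot \bigl(2r-\ell + 2(\ell-r)\bigr) \;=\; \ell \cdot (m-1)_{r-1} \;>\; 0,$$
using $m \ge r$ (which holds by definition of $m$).

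For the monotonicity on the middle range, I introduce $d := 2r - \ell \ge 1$ and $c := \ell - r \ge 1$ and, for $q \in [r-1, m-2]$, expand the ratio
$$\frac{h(q+1)}{h(q)} \;=\; \frac{q+1}{q-r+2} \cdot \frac{d + 2c(m-q-1)}{d + 2c(m-q)}.$$
Cross-multiplying and simplifying reduces the inequality $h(q+1) > h(q)$ to the linear-in-$q$ condition $(r-1)(d + 2cm) > 2c(rq+1)$. Since the right-hand side is increasing in $q$, it suffices to verify this at $q = m-2$, which after substitution is equivalent to
$$m \;<\; 2r - 1 + \frac{(r-1)(2r-\ell)}{2(\ell-r)}.$$
Analogously, the inequality $h(m-1) > h(m)$ simplifies (after cancelling $(m-1)_{r-1}$ and using $d+2c=\ell$) directly to $m > \frac{\ell(r-1)}{2(\ell-r)}$, which is precisely part~\ref{Lemma:m-lower bound} of Lemma~\ref{Lemma:m-bounds}.

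It remains to verify the displayed upper bound on $m$. It is trivial when $m \le 2r-1$. For $m \ge 2r$, writing $s := 2r - \ell \in [1, r-1]$ (so that $\ell-r = r-s$ and $\ell(\ell-1) = (2r-s)(2r-s-1)$), a short polynomial computation yields the key identity
$$\left(2r - 1 + \frac{(r-1)s}{2(r-s)}\right) - \frac{\ell(\ell-1)}{2(\ell-r)} \;=\; \frac{s(r-s)}{2(r-s)} \;=\; \frac{s}{2} \;>\; 0,$$
so the required bound follows from part~\ref{Lemma:m-upper-bound} of Lemma~\ref{Lemma:m-bounds}, which gives $m < \frac{\ell(\ell-1)}{2(\ell-r)}$. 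The main obstacle is discovering this precise algebraic identity: it is the exact bridge between the combinatorial inequality forced at $q = m-2$ and the explicit upper bound on $m$ established earlier. Once the identity is in hand, the two directions of monotonicity are mechanical consequences of the two bounds in Lemma~\ref{Lemma:m-bounds}, and the lemma follows.
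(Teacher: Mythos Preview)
Your proof is correct and follows essentially the same strategy as the paper: show $h$ vanishes on $[0,r-2]$, is strictly increasing on $\{r-1,\ldots,m-1\}$, and drops at $q=m$, with the two monotonicity facts reducing respectively to the upper and lower bounds on $m$ from Lemma~\ref{Lemma:m-bounds}. The only difference is organizational: the paper treats the step $h(m-2)<h(m-1)$ separately from the steps $h(q-1)<h(q)$ for $q\in[r,m-2]$, while you handle the entire increasing range at once by reducing to the single worst case $q=m-2$ and invoking the identity $\bigl(2r-1+\tfrac{(r-1)(2r-\ell)}{2(\ell-r)}\bigr)-\tfrac{\ell(\ell-1)}{2(\ell-r)}=\tfrac{2r-\ell}{2}$; this is a mild streamlining, and your case split on $m\le 2r-1$ versus $m\ge 2r$ is in fact unnecessary since the identity holds unconditionally.
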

\begin{proof}
    Note that $h(m-1) = \ell (m-1)_{r-1} > 0$. So it suffices to prove that for every $q \in [r-1, m] \setminus\{m-1\}$, 
    \begin{align}\label{equ:small-stable-inequality}
        h(q)
        = (q)_{r-1}\cdot \bigl(2r - \ell + 2(m-q)(\ell - r)\bigr) 
        < \ell (m-1)_{r-1}.
    \end{align}
    For $q = m$, inequality~\eqref{equ:small-stable-inequality} becomes $(m)_{r-1}(2r-\ell) < \ell(m-1)_{r-1}$, which simplifies to $2(\ell-r)m > \ell(r-1)$. 
    This follows from the lower bound on $m$ given by Lemma~\ref{Lemma:m-bounds}~\ref{Lemma:m-lower bound}, and hence this case holds.
    It remains to show $h(q) < h(m-1)$ for every $q \in [r, m-2]$.

    \begin{claim}\label{CLAIM:stable-small-hq-der}
        Suppose that $q \in [r, m-1]$. Then $h(q) - h(q-1) > 0$. 
    \end{claim}
    \begin{proof}
        First, we consider the case $q = m-1$. 
        In this case we need to show $(m-2)_{r-1}(3\ell-2r) < \ell(m-1)_{r-1}$, which is equivalent to $2(\ell-r)m < 3r\ell-2r^2-\ell$. 
        Since $m < \frac{\ell(\ell-1)}{2(\ell-r)}$ (by Lemma~\ref{Lemma:m-bounds}~\ref{Lemma:m-upper-bound}), we have 
        \begin{align*}
            2(\ell-r)m - (3r\ell-2r^2-\ell)
            % < \ell(\ell-1) - 3r\ell+2r^2+\ell 
            < 2(\ell-r) \cdot \frac{\ell(\ell-1)}{2(\ell-r)} - (3r\ell-2r^2-\ell)
            = (\ell-r)(\ell-2r) 
            < 0. 
        \end{align*}
        This proves that $h(m-2) < h(m-1)$. 
    
        Next, we consider the case $q \in [r, m-2]$. 
        It follows from $m < \frac{\ell(\ell-1)}{2(\ell-r)}$ (by Lemma~\ref{Lemma:m-bounds}~\ref{Lemma:m-upper-bound}) that 
        \begin{align*}%\label{equ:stable-small-inequality-1363}
            2(\ell-r)(3r-m-1)+(r-1)(2r-\ell)
            & > 2(\ell-r)\left(3r-1-\tfrac{\ell(\ell-1)}{2(\ell-r)}\right)+(r-1)(2r-\ell) \notag \\
            & = 2(\ell-r)(3r-1) - \ell(\ell-1) + (r-1)(2r-\ell) \notag \\
            & %= 5r\ell - \ell^2 - 4r^2 
            = (\ell-r)(4r-\ell) 
            >0.
        \end{align*}
        Since $q \le m-2$, we have 
        \begin{align*}
           (r-1)(m+1)-rq
            \ge (r-1)(m+1)-r(m-2)
            = 3r-1-m. 
        \end{align*}
        Consequently, 
        \begin{align}\label{equ:stable-small-inequality-1388}
            2(\ell-r)\bigl((r-1)(m+1)-rq\bigr)+(r-1)(2r-\ell) 
            \ge  2(\ell-r)(3r-m-1)+(r-1)(2r-\ell) 
            > 0. 
        \end{align}
        Now, fix $q \in [r, m-2]$, and let $\delta(q) \coloneqq h(q) - h(q-1)$. Then 
         \begin{align*}
            \delta(q)
            & = (q)_{r-1}\bigl(2r-\ell+2(m-q)(\ell-r)\bigr) - (q-1)_{r-1}\bigl(2r-\ell+2(m-q+1)(\ell-r)\bigr) \\
            & = (q-1)_{r-2} \left( q\bigl(2r-\ell+2(m-q)(\ell-r)\bigr) - (q-r+1)\bigl(2r-\ell+2(m-q+1)(\ell-r)\bigr) \right) \\
            & = (q-1)_{r-2} \left(2(\ell-r)\bigl((r-1)(m+1)-rq\bigr)+(r-1)(2r-\ell)\right) 
            > 0, 
        \end{align*}
        where the last inequality follows from~\eqref{equ:stable-small-inequality-1388}. 
        This proves Claim~\ref{CLAIM:stable-small-hq-der}. 
    \end{proof}%CLAIM
    
    It follows from Claim~\ref{CLAIM:stable-small-hq-der} that $h(q) < h(m-1)$ for every $q \in [r, m-2]$. 
    This proves Lemma~\ref{LEMMA:stable-small-inequality}. 
\end{proof}%LEMMA

\begin{proof}[\bf Proof of Theorem~\ref{THM:almost-balanced-perfect-stability} for $\ell < 2r$]
    Let $F = K_{a_1, \ldots, a_{r}}$ be an almost balanced complete $r$-partite graph on $\ell \in [r+1,~2r-1]$ vertices (i.e. $\ell = a_1 + \cdots + a_r$).
    Let $m = m_{r,\ell}$ be the integer given by Lemma~\ref{Lemma:Unique-Maximizer}. 
    Assume that $a_1 \ge \cdots \ge a_r$. 
    By Fact~\ref{FACT:almost-balanced-sizes}~\ref{FACT:almost-balanced-sizes-1}, we have $a_1 = \cdots = a_{\ell-r} = 2$ and $a_{\ell-r+1} = \cdots = a_r = 1$. 
    % \begin{align*}
    %     a_1 = \cdots = a_{\ell-r} = 2 
    %     \quad\text{and}\quad 
    %     a_{\ell-r+1} = \cdots = a_r = 1. 
    % \end{align*}

    Let $\varepsilon > 0$ be a sufficiently small constant such that, in particular, 
    \begin{align*}
        \varepsilon
        \le \left(2 m^{\ell+1}{2^{\ell-r}(\ell-r)!(2r-\ell)!}\right)^{-1}. 
        % \min\left\{ \frac{1}{m^{\ell+1}{2^{\ell-r}(\ell-r)!(2r-\ell)!}},~ \frac{\min\{h(m-1) - h(q) \colon q \in [m] \text{ and } q\neq m-1\}}{2 m^{\ell-1} 2^{\ell-r} (\ell-r)! (2r-\ell)!}\right\}.  
    \end{align*}
    % Here, $h$ is the discrete function defined in Lemma~\ref{LEMMA:stable-small-inequality}.
    %
    It follows from Theorem~\ref{THM:almost-balanced-asymptotic} that $\OPT(F) = \{ \bm{m} \}$.  
    % \begin{align*}
    %     i(F)
    %     = \kappa_{F} \cdot \frac{(m-1)_{r-1}}{m^{\ell-1}}. 
    % \end{align*}
    Thus, by Theorem~\ref{THM:Stability-Verification}, it suffices to verify~\eqref{cond:S1} and~\eqref{cond:S2} for all sufficiently large $n$ (with $\varepsilon$ there corresponding to $\varepsilon$ here). 
    
    Fix a sufficiently large integer $n$ and let $k \coloneqq \lfloor n/m \rfloor$. For convenience, we will use $o(\cdot)$ to denote lower-order terms.
    % and assume that $n = km$ for some integer $k$ (i.e., $n$ is divisible by $m$). It will be clear from the proof that this divisibility assumption has no substantive effect; it is included only to allow us to omit floor and ceiling signs. 
    Let $G \coloneqq G_{n,\bm{m}}$ be the $n$-vertex realization with parts $V_1, \ldots, V_m$, where $|V_i| = k$ for all $i \in [m]$.

    We first verify~\eqref{cond:S1} for all distinct $x,y \in V(G)$. 
    
    \begin{claim}\label{CLAIM:small-stable-S1-distinct}
        Suppose that $(x,y) \in V_i \times V_j$ for distinct $i,j \in [m]$. Then 
        \[
            I(F, G) - I(F, G \oplus xy) \geq \varepsilon n^{\ell-2}.
        \]
    \end{claim}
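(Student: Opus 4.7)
The plan is to compute $I(F, G) - I(F, G \oplus xy)$ by direct enumeration. Since only induced copies of $F$ containing both $x$ and $y$ are affected by the single-edge modification, it suffices to show $N_1 - N_2 \geq \varepsilon n^{\ell-2}$, where $N_1$ and $N_2$ denote the numbers of induced copies of $F$ containing both $x$ and $y$ in $G$ and in $G \oplus xy$, respectively. Setting $p \coloneqq \ell - r$ and $q \coloneqq 2r - \ell$, Fact~\ref{FACT:almost-balanced-sizes}~\ref{FACT:almost-balanced-sizes-1} gives that $F$ has $p \geq 1$ parts of size $2$ and $q \geq 1$ singleton parts, so an induced copy of $F$ in a complete multipartite graph corresponds to selecting which ambient parts play the role of size-$2$ vs.\ singleton parts.

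First, I would write $N_1$ as a sum over the four configurations $(|S \cap V_i|, |S \cap V_j|) \in \{(2,2),(2,1),(1,2),(1,1)\}$, using that $x$ and $y$ must lie in distinct parts of $F$ since $xy \in E(G)$. For $N_2$, the key observation is that the new non-edge $\{x, y\}$ can only be accommodated in the part-structure of an induced copy of $F$ if $\{x, y\}$ itself forms a size-$2$ part, which forces $S \cap V_i = \{x\}$ and $S \cap V_j = \{y\}$; otherwise the non-edge graph on $S$ in $G \oplus xy$ fails to be a disjoint union of cliques. Expanding $\binom{k}{2}^{j} = k^{2j}/2^{j}\,(1 + O(1/k))$ with $k = \lfloor n/m \rfloor$ and collecting leading terms, repeated application of identities such as $\binom{m-2}{p-2}\binom{m-p}{q} = \tfrac{p-1}{q}\binom{m-2}{p-1}\binom{m-p-1}{q-1}$ compresses the resulting five binomial-coefficient products into the factored form
\[
N_1 - N_2 = \frac{k^{\ell-2}}{2^{p}} \cdot \frac{\binom{m-2}{p-1}\binom{m-p-1}{q-1}}{p\,q} \cdot \bigl(6p^{2} + 6pq + q^{2} - 4p - q - 2pm\bigr) \cdot (1 + o(1)).
\]

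The main obstacle will be showing that the bracket $6p^{2} + 6pq + q^{2} - 4p - q - 2pm$ is strictly positive; since it is an integer, this forces it to be at least $1$, and the claim then follows by choosing $\varepsilon$ sufficiently small. Positivity of the bracket is equivalent to $m < 3r - 2 + \tfrac{q(q-1)}{2p}$. To establish this, I would combine the upper bound $m < r\ell^{2}/(\ell^{2} - r^{2})$ from Lemma~\ref{Lemma:m-bounds}~\ref{Lemma:m-upper-bound} with the algebraic inequality
\[
\frac{r\ell^{2}}{\ell^{2} - r^{2}} \leq 3r - 2 + \frac{q(q-1)}{2p},
\]
which, after substituting $\ell = r + p$ and $q = r - p$ and clearing denominators, reduces to verifying $r^{2}(5p - 2) + rp(4p - 7) + p^{2}(p - 3) \geq 0$. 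This last inequality is easily handled by a short case analysis on $p$: for $p \geq 3$ every summand is nonnegative with the first being strictly positive, while for $p \in \{1, 2\}$ one uses $r \geq p + 1$ to verify directly.
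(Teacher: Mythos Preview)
Your proposal is correct and follows essentially the same approach as the paper: both decompose the count $N_1$ according to $(|S\cap V_i|,|S\cap V_j|)\in\{1,2\}^2$, observe that $N_2$ only counts configurations where $\{x,y\}$ forms a size-$2$ part, and arrive at the same leading coefficient (your bracket $6p^2+6pq+q^2-4p-q-2pm$ equals the paper's $\ell^2-3\ell+2r-2(m-r)(\ell-r)$). The only difference is in the positivity step: the paper invokes the bound $m<\tfrac{\ell(\ell-1)}{2(\ell-r)}$ from Lemma~\ref{Lemma:m-bounds}~\ref{Lemma:m-upper-bound} to get the clean lower bound $2(r-1)(\ell-r)$ on the bracket directly, whereas you use the weaker consequence $m<\tfrac{r\ell^2}{\ell^2-r^2}$ and then verify a polynomial inequality in $r,p$; both routes work, but the paper's choice avoids the case analysis.
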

    \begin{proof}
        By symmetry, we may assume that $(x,y) \in V_1 \times V_2$. 
        % Assume without loss of generality that $x \in V_1$ and $y \in V_2$. 
        Since $x$ and $y$ lie in different parts, the pair $\{x, y\}$ is an edge of $G$.
        Let $\tilde{G}$ denote the induced subgraph of $G$ on $V_3 \cup \cdots \cup V_{r}$, noting that $\tilde{G}$ is complete $(m-2)$-partite. 
        % Every induced copy $T$ of $F$ in $G$ that contains the edge $xy$ can be classified into three cases, depending on how its remaining vertices are distributed among $V_1, \ldots, V_m$.
        
        Let $\mathcal{C}$ denote the collection of all induced copies of $F$ in $G$ that contain $\{x,y\}$. 
        Let 
        \begin{align*}
            \mathcal{C}_{1,1}
            & \coloneqq \left\{T \in \mathcal{C} \colon \left(|V_1\cap V(T)|, |V_2\cap V(T)| \right) = (1,1) \right\},  \\[.2em]
            \mathcal{C}_{1,2}
            & \coloneqq \left\{T \in \mathcal{C} \colon \left(|V_1\cap V(T)|, |V_2\cap V(T)| \right) = (1,2) \right\}, \\[.2em]
            \mathcal{C}_{2,1}
            & \coloneqq \left\{T \in \mathcal{C} \colon \left(|V_1\cap V(T)|, |V_2\cap V(T)| \right) = (2,1) \right\}, \\[.2em]     
            \mathcal{C}_{2,2}
            & \coloneqq \left\{T \in \mathcal{C} \colon \left(|V_1\cap V(T)|, |V_2\cap V(T)| \right) = (2,2) \right\}, 
        \end{align*}
        noting that $\mathcal{C}_{1,1} \cup \mathcal{C}_{1,2} \cup \mathcal{C}_{2,1} \cup \mathcal{C}_{2,2} = \mathcal{C}$. 

        Observe that the size of $\mathcal{C}_{1,1}$ is $0$ if $\ell=2r-1$, and coincides with the value of $I\big( K_{a_1, \ldots, a_{r-2}}, \tilde{G} \big)$ if $\ell\le 2r-2$. 
        So, by Fact~\ref{FACT:Directly-Computation-handy}, we have 
        \begin{align*}%\label{equ:stable-C11-small}
            |\mathcal{C}_{1,1}|
            & =
            \begin{cases}
                0, & \quad\text{if}\quad \ell = 2r-1 \\[.2em]
                 \frac{(1+o(1)) (m-2)_{r-2}}{2^{\ell-r} (\ell-r)! (2r-\ell-2)!} k^{\ell-2} & \quad\text{if}\quad \ell \le 2r-2
            \end{cases}
            \quad =  \frac{(1+o(1)) (m-2)_{r-2}}{2^{\ell-r} (\ell-r)! (2r-\ell)!} (2r-\ell)(2r-1-\ell) k^{\ell-2}. 
        \end{align*}

        Similar, we have 
        \begin{align*}
            |\mathcal{C}_{1,2}| + |\mathcal{C}_{1,2}|
            & = 2 \cdot I\big( K_{a_2, \ldots, a_{r-1}}, \tilde{G} \big)
            =  \frac{(1+o(1)) (m-2)_{r-2}}{2^{\ell-r-2} (\ell-r-1)! (2r-\ell-1)!} k^{\ell-2}, 
        \end{align*}
        and 
        \begin{align*}
            |\mathcal{C}_{2,2}|
            = 
            \begin{cases}
                0 & \text{if}\quad \ell = r+1 ~\\[.2em]
                 \frac{(1+o(1)) (m-2)_{r-2}}{2^{\ell-r-2} (\ell-r-2)! (2r-\ell)!} k^{\ell-2} & \text{if}\quad \ell \ge r+2 ~
            \end{cases} 
            \quad  =  \frac{(1+o(1)) (m-2)_{r-2}}{2^{\ell-r-2} (\ell-r)! (2r-\ell)!} (\ell-r)(\ell-r-1) k^{\ell-2}. 
        \end{align*}
        Therefore, 
        \begin{align}\label{equ:C-szie-ell-small}
            |\mathcal{C}|
            & = |\mathcal{C}_{1,1}| + |\mathcal{C}_{1,2}| + |\mathcal{C}_{1,2}| + |\mathcal{C}_{2,2}| \notag \\[.3em]
            & =  \frac{(1+o(1)) (m-2)_{r-2}}{2^{\ell-r} (\ell-r)! (2r-\ell)!} \left((2r-\ell)(2r-1-\ell) + 4(\ell-r)(2r-\ell) +4(\ell-r)(\ell-r-1)\right) k^{\ell-2} \notag \\[.3em]
            & = (1+o(1)) \frac{(m-2)_{r-2} (\ell^2 - 3\ell + 2r)}{2^{\ell-r} (\ell-r)! (2r-\ell)!} k^{\ell-2}. 
        \end{align}

        \medskip 

        Let $\mathcal{C}'$ denote the collection of all induced copies of $F$ in $G\oplus xy$ that contain $\{x,y\}$. 
        Observe that, in each copy in $\mathcal{C}'$, the pair $\{x,y\}$ must form a part of size two.
        Thus, the size of $\mathcal{C}'$ coincides with the value of $I\big( K_{a_2, \ldots, a_{r}}, \tilde{G} \big)$, and hence, by Fact~\ref{FACT:Directly-Computation-handy}, 
        \begin{align*}
            |\mathcal{C}'|
            = \frac{(1+o(1)) (m-2)_{r-1}}{2^{\ell-r-1} (\ell-r-1)! (2r-\ell)!} k^{\ell-2}. 
        \end{align*}
        Combining this with~\eqref{equ:C-szie-ell-small}, we obtain 
        \begin{align*}
            I(F,G)-I(F,G\oplus xy)
            = |\mathcal{C}| - |\mathcal{C}'|
            & =  \frac{(1+o(1)) (m-2)_{r-2}}{2^{\ell-r} (\ell-r)! (2r-\ell)!} \big(\ell^2 - 3\ell + 2r - 2(m-r)(\ell-r)\big) k^{\ell-2}. 
        \end{align*}
        By Lemma~\ref{Lemma:m-bounds}~\ref{Lemma:m-upper-bound}, we have $m < \frac{\ell(\ell-1)}{2(\ell-r)}$, which implies that 
        \begin{align*}
            \ell^2 - 3\ell + 2r - 2(m-r)(\ell-r) 
            & > \ell^2 - 3\ell + 2r - 2\left(\tfrac{\ell(\ell-1)}{2(\ell-r)} - r\right)(\ell-r) \\
            & = \ell^2 - 3\ell + 2r - \ell(\ell-1) + 2r(\ell-r) 
            = 2(r-1)(\ell - r). 
        \end{align*}
        Therefore, 
        \begin{align*}
            I(F,G)-I(F,G\oplus xy)
            \ge  (1+o(1)) \frac{2(r-1)(\ell - r) (m-2)_{r-2}}{2^{\ell-r} (\ell-r)! (2r-\ell)!}  k^{\ell-2}
            > \varepsilon n^{\ell-2}, 
        \end{align*}
        which proves Claim~\ref{CLAIM:small-stable-S1-distinct}. 
    \end{proof}%CLAIM

    \begin{claim}\label{CLAIM:small-stable-S1-same}
        Suppose that $\{x,y\} \subseteq V_i$ for some $i \in [m]$. Then 
        \[
            I(F, G) - I(F, G \oplus xy) \geq \varepsilon n^{\ell-2}.
        \]
    \end{claim}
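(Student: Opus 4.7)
My plan is to mirror the proof of Claim~\ref{CLAIM:small-stable-S1-distinct} by directly comparing the counts $|\mathcal{C}|$ and $|\mathcal{C}'|$ of induced copies of $F$ containing $\{x,y\}$ in $G$ and in $G \oplus xy$ respectively. By the symmetry of the parts we may assume $\{x,y\} \subseteq V_1$, and we set $\tilde{G} \coloneqq G[V_2 \cup \cdots \cup V_m]$, which is a complete $(m-1)$-partite graph with parts of size $k$.

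The first step is a structural reduction: for any copy $T \in \mathcal{C} \cup \mathcal{C}'$, we have $V_1 \cap V(T) = \{x,y\}$. Indeed, a third vertex $z \in V_1 \cap V(T)$ would be non-adjacent to both $x$ and $y$ in both $G$ and $G \oplus xy$ (toggling the single pair $\{x,y\}$ does not alter adjacencies involving $z$), producing an independent triple inside the copy; this is impossible since every part of $F$ has size at most two. Consequently, in $G$ the non-edge $\{x,y\}$ must form one of the $2$-parts of the $F$-copy, so the remaining $\ell-2$ vertices form an induced copy of $K_{a_2,\ldots,a_r}$ in $\tilde{G}$. In $G \oplus xy$, the newly created edge forces $x$ and $y$ into distinct parts of the copy, and since the only potential part-mates for $x$ or $y$ would have to lie in $V_1 \setminus \{x,y\} = \emptyset$, both $x$ and $y$ must be singleton parts; this immediately yields $|\mathcal{C}'| = 0$ whenever $\ell = 2r-1$ (since $F$ then contains only one singleton part), and otherwise the remaining $\ell-2$ vertices form an induced copy inside $\tilde{G}$ of the complete multipartite graph with $\ell-r$ parts of size $2$ and $2r-\ell-2$ parts of size $1$.

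Applying Fact~\ref{FACT:Directly-Computation-handy} to each of the two counts and simplifying yields
\begin{align*}
    |\mathcal{C}| - |\mathcal{C}'| = (1+o(1)) \cdot \frac{(m-1)_{r-2}\, k^{\ell-2}}{2^{\ell-r}(\ell-r)!(2r-\ell)!} \cdot \Bigl(2(m-r+1)(\ell-r) - (2r-\ell)(2r-\ell-1)\Bigr).
\end{align*}
The main obstacle is showing that the bracketed factor is positive and bounded below by a quantity independent of $n$. This is precisely where Lemma~\ref{Lemma:m-bounds}~\ref{Lemma:m-lower bound} is essential: the bound $m > \tfrac{\ell(r-1)}{2(\ell-r)}$ gives $2(m-r+1)(\ell-r) > \ell(r-1) - 2(r-1)(\ell-r) = (r-1)(2r-\ell)$, so the bracket strictly exceeds $(2r-\ell)\bigl((r-1) - (2r-\ell-1)\bigr) = (2r-\ell)(\ell-r) \geq 1$. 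Substituting $k = (1+o(1))n/m$ and comparing against the prescribed choice of $\varepsilon$ will then deliver $I(F,G) - I(F, G \oplus xy) \geq \varepsilon n^{\ell-2}$ for all sufficiently large $n$, completing the claim.
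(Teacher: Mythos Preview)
Your proof is correct and follows essentially the same approach as the paper: reduce to counting induced copies of $K_{a_2,\ldots,a_r}$ (respectively $K_{a_1,\ldots,a_{r-2}}$) in the complete $(m-1)$-partite graph on $V_2\cup\cdots\cup V_m$, apply Fact~\ref{FACT:Directly-Computation-handy}, and use Lemma~\ref{Lemma:m-bounds}~\ref{Lemma:m-lower bound} to lower-bound the bracketed factor by $(2r-\ell)(\ell-r)\ge 1$. One small notational slip: you write ``$V_1\setminus\{x,y\}=\emptyset$'', but $V_1$ has $k\ge 2$ vertices; what you mean (and what you already proved in the structural reduction) is that $V(T)\cap(V_1\setminus\{x,y\})=\emptyset$.
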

    \begin{proof}
        By symmetry, we may assume that $\{x, y\} \subseteq V_1$. Note that the pair $\{x, y\}$ is not an edge of $G$.
        Let $G'$ denote the induced subgraph of $G$ on $V_2 \cup \cdots \cup V_{m}$. 

        It is clear that the number of induced copies of $F$ in $G$ that contain $\{x, y\}$ coincides with the value of $I(K_{a_2, \ldots, a_{r}}, G')$, which, by Fact~\ref{FACT:Directly-Computation-handy}, is 
        \begin{align*}
            (1+o(1)) \frac{(m-1)_{r-1}}{2^{\ell-r-1} (\ell-r-1)! (2r-\ell)!} k^{\ell-2}. 
        \end{align*}
        If $\ell = 2r-1$, then $F$ has only one part of size one, and hence, the number of induced copies of $F$ in $G \oplus xy$ that contain $\{x, y\}$ is $0$. 
        If $\ell \le 2r-2$, then the number of induced copies of $F$ in $G \oplus xy$ that contain $\{x, y\}$ coincides with the value of $I(K_{a_1, \ldots, a_{r-2}}, G')$, which, by Fact~\ref{FACT:Directly-Computation-handy}, is 
        \begin{align*}
            (1+o(1)) \frac{(m-1)_{r-2}}{2^{\ell-r} (\ell-r)! (2r-\ell-2)!} k^{\ell-2}. 
        \end{align*}
        Therefore, 
        \begin{align*}
            I(F,G)-I(F,G\oplus xy)
            & =  \frac{(1+o(1)) (m-1)_{r-2}}{2^{\ell-r} (\ell-r)! (2r-\ell)!} \big(2(\ell-r)(m-r+1) -(2r-\ell)(2r-1-\ell) \big) k^{\ell-2}. 
        \end{align*}
        By Lemma~\ref{Lemma:m-bounds}~\ref{Lemma:m-lower bound}, we have $m > \frac{\ell(r-1)}{2(\ell-r)}$. 
        Therefore, 
        \begin{multline*}
            2(\ell-r)(m-r+1) -(2r-\ell)(2r-1-\ell) \\
             > 2(\ell-r)\left(\tfrac{\ell(r-1)}{2(\ell-r)}-r+1\right) -(2r-\ell)(2r-1-\ell) 
             = (\ell-r)(2r-\ell). 
        \end{multline*}
        % \begin{align*}
        %     2(\ell-r)(m-r+1) -(2r-\ell)(2r-1-\ell)
        %     & > 2(\ell-r)\left(\tfrac{\ell(r-1)}{2(\ell-r)}-r+1\right) -(2r-\ell)(2r-1-\ell) \\
        %     & = (\ell-r)(2r-\ell). 
        % \end{align*}
        %
        It follows that 
        \begin{align*}
            I(F,G)-I(F,G\oplus xy)
            & \ge \frac{(1+o(1)) (m-1)_{r-2}}{2^{\ell-r} (\ell-r)! (2r-\ell)!}  (\ell-r)(2r-\ell)  k^{\ell-2} \\
            & = \frac{(1+o(1)) (m-1)_{r-2}}{2^{\ell-r} (\ell-r-1)! (2r-\ell-1)!}  k^{\ell-2}
            > \varepsilon n^{\ell-2}, 
        \end{align*}
        which proves Claim~\ref{CLAIM:small-stable-S1-same}.
    \end{proof}%CLAIM

    \medskip

    It follows from Claims~\ref{CLAIM:small-stable-S1-distinct} and~\ref{CLAIM:small-stable-S1-same} that~\eqref{cond:S1} holds. 
    So it remains to verify~\eqref{cond:S2}.
    
    \begin{claim}\label{CLAIM:small-stable-S2}
        Suppose that $B \subseteq [m]$ is a subset with $|B| = q \in [0,m]$. 
        Then 
        \begin{align*}
            |\tilde{\mathcal{C}}_{q}|
            & = \frac{(1+o(1)) h(q)}{2^{\ell-r} (\ell-r)! (2r-\ell)!} k^{\ell-1}, 
        \end{align*}
        where $h\colon [m] \to \mathbb{Z}$ is the discrete function defined in Lemma~\ref{LEMMA:stable-small-inequality}. 
    \end{claim}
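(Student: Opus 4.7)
The plan is to compute $|\tilde{\mathcal{C}}_q|$ directly by classifying the induced copies of $F$ in $G_B$ that use the new vertex $v$ (the one adjacent precisely to $\bigcup_{i\in B}V_i$) according to the size of the part $P_v$ of $F$ containing $v$. Since $\ell \le 2r-1$, Fact~\ref{FACT:almost-balanced-sizes}~\ref{FACT:almost-balanced-sizes-1} guarantees that every part of $F$ has size one or two, so $|P_v| \in \{1,2\}$. The key structural observation is that for any such induced copy $C$, every vertex in $C \setminus P_v$ must be adjacent to $v$ in $G_B$ and therefore must lie in $\bigcup_{i\in B} V_i$, while every vertex in $P_v \setminus \{v\}$ must be non-adjacent to $v$ and hence must lie in $\bigcup_{i\notin B} V_i$. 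Once this dichotomy is in place, each case reduces to counting induced copies of a smaller graph in the complete $q$-partite graph $G[\bigcup_{i\in B} V_i]$, whose parts all have size $k = (1+o(1))n/m$, so Fact~\ref{FACT:Directly-Computation-handy} is directly applicable.

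Concretely, when $|P_v|=1$, deleting $v$ from the copy leaves an induced copy of
\[
  F' \coloneqq K_{\underbrace{2,\ldots,2}_{\ell-r},\ \underbrace{1,\ldots,1}_{2r-\ell-1}}
\]
in $G[\bigcup_{i\in B} V_i]$, which by Fact~\ref{FACT:Directly-Computation-handy} contributes
\[
  (1+o(1))\,\frac{(q)_{r-1}}{2^{\ell-r}(\ell-r)!(2r-\ell-1)!}\,k^{\ell-1}.
\]
When $|P_v|=2$, I would first choose the partner $u \in \bigcup_{i\notin B} V_i$ (in $(m-q)k$ ways, noting that $u$ is automatically adjacent in $G_B$ to every vertex of $\bigcup_{i\in B} V_i$) and then an induced copy of
\[
  F'' \coloneqq K_{\underbrace{2,\ldots,2}_{\ell-r-1},\ \underbrace{1,\ldots,1}_{2r-\ell}}
\]
in $G[\bigcup_{i\in B} V_i]$, which yields
\[
  (1+o(1))\,\frac{(q)_{r-1}(m-q)}{2^{\ell-r-1}(\ell-r-1)!(2r-\ell)!}\,k^{\ell-1}.
\]
Summing the two contributions and factoring out the common term $\frac{(q)_{r-1}}{2^{\ell-r}(\ell-r)!(2r-\ell)!}\,k^{\ell-1}$ collapses the bracket into $(2r-\ell)+2(m-q)(\ell-r)$, which is exactly $h(q)/(q)_{r-1}$, giving the claimed identity.

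The argument is essentially bookkeeping once the structural observation is established, so I do not foresee a serious obstacle. The main points requiring a moment of care are (i) verifying that the boundary cases $\ell = r+1$ (where $F''$ degenerates to the complete graph $K_{r-1}$, using $0!=1$) and $\ell = 2r-1$ (where $F'$ has no singleton parts) are still correctly represented by the same formula, and (ii) checking that the symmetry denominators $(\ell-r)!(2r-\ell-1)!$ and $(\ell-r-1)!(2r-\ell)!$ from the two cases combine cleanly into the single denominator $(\ell-r)!(2r-\ell)!$ after the identities $\frac{1}{(2r-\ell-1)!}=\frac{2r-\ell}{(2r-\ell)!}$ and $\frac{1}{(\ell-r-1)!}=\frac{\ell-r}{(\ell-r)!}$ are applied.
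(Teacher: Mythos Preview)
Your proposal is correct and follows essentially the same approach as the paper's proof: both split $\tilde{\mathcal{C}}_q$ according to whether $v_\ast$ lies in a part of size one or two, reduce each case via Fact~\ref{FACT:Directly-Computation-handy} to counting induced copies of $K_{a_1,\ldots,a_{r-1}}$ (respectively $K_{a_2,\ldots,a_r}$) in the complete $q$-partite graph on $\bigcup_{i\in B}V_i$, and then combine the two contributions into the single expression involving $h(q)$. Your structural observation and your explicit attention to the boundary cases $\ell=r+1$ and $\ell=2r-1$ are handled only implicitly in the paper, but the computations and the final assembly are identical.
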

    \begin{proof}
        Fix a subset $B \subseteq [m]$ with $|B| = q$. 
        Recall that $G_{B}$ is the graph obtained from $G$ by adding a new vertex $v_{\ast}$ that is adjacent to precisely all vertices in $\bigcup_{i \in B} V_i$. 
        By symmetry, we may assume that $B = \{1, \ldots, q\}$. 

        Let $\tilde{\mathcal{C}}_{q}$ denote the collection of all induced copies of $F$ in $G_A$ that contain $v_{\ast}$. 
        Let $\tilde{\mathcal{C}}_{q}^{1}$ denote the collection of all induced copies of $F$ in $G_A$ in which $v_{\ast}$ forms a part of size one, and let $\tilde{\mathcal{C}}_{q}^{2}$ denote the collection of all induced copies of $F$ in $G_A$ in which $v_{\ast}$ lies in a part of size two.

        Note that the size of $\tilde{\mathcal{C}}_{q}^{1}$ coincides with the value of $I(K_{a_1, \ldots, a_{r-1}}, K[V_1, \ldots, V_{q}])$, which, by Fact~\ref{FACT:Directly-Computation-handy}, is 
        \begin{align*}
             \frac{(1+o(1)) (q)_{r-1}}{2^{\ell-r} (\ell-r)! (2r-\ell-1)!} k^{\ell-1}. 
        \end{align*}
        % Here, for convenience, we set $(q)_{r-1} = 0$ if $q < r-1$. 

        Note that the size of $\tilde{\mathcal{C}}_{q}^{2}$ coincides with the value of $I(K_{a_2, \ldots, a_{r}}, K[V_1, \ldots, V_{q}) \cdot \sum_{i=q+1}^{m}|V_i|$, which, by Fact~\ref{FACT:Directly-Computation-handy},~is 
        \begin{align*}
            \frac{(1+o(1))  (q)_{r-1}}{2^{\ell-r-1} (\ell-r-1)! (2r-\ell)!} k^{\ell-2} \cdot (m-q) k
            = \frac{(1+o(1)) (m-q) (q)_{r-1}}{2^{\ell-r-1} (\ell-r-1)! (2r-\ell)!} k^{\ell-1}. 
        \end{align*}

        Therefore, 
        \begin{align*}
            |\tilde{\mathcal{C}}_{q}|
            = |\tilde{\mathcal{C}}_{q}^{1}| + |\tilde{\mathcal{C}}_{q}^{2}|
            & = \frac{(1+o(1)) (q)_{r-1}}{2^{\ell-r} (\ell-r)! (2r-\ell-1)!} k^{\ell-1} + \frac{(1+o(1)) (m-q) (q)_{r-1}}{2^{\ell-r-1} (\ell-r-1)! (2r-\ell)!} k^{\ell-1} \\
            & = \frac{(1+o(1)) (q)_{r-1}}{2^{\ell-r} (\ell-r)! (2r-\ell)!} \big((2r-\ell) + 2(m-q) (\ell-r-1)\big) k^{\ell-1} 
             = \frac{(1+o(1)) h(q)}{2^{\ell-r} (\ell-r)! (2r-\ell)!} k^{\ell-1}.  
        \end{align*}
        This proves Claim~\ref{CLAIM:small-stable-S2}. 
    \end{proof}%CLAIM

    Fix an arbitrary set $A \subseteq [m]$ with $|A| \neq m-1$. 
    % Let 
    % \begin{align*}
    %     \varepsilon 
    %     \coloneqq \frac{\min\{g(m-1) - g(q) \colon q \in [m] \text{ and } q\neq m-1\}}{2 m^{\ell-1} 2^{\ell-r} (\ell-r)! (2r-\ell)!}. 
    % \end{align*}
    %
    % It follows from Lemma~\ref{LEMMA:stable-small-inequality} that $\varepsilon > 0$. 
    Let 
    \begin{align*}
        \Delta 
        \coloneqq \min\big\{ I(F, G_{A^{\ast}}) \colon A^{\ast} \subseteq [m] \text{ and } |A^{\ast}| = m-1 \big\} - I(F, G_A). 
    \end{align*}
    Then it follows from Claim~\ref{CLAIM:small-stable-S2} and Lemma~\ref{LEMMA:stable-small-inequality} that 
    \begin{align*}
        \Delta
        & = \frac{(1+o(1)) h(m-1)}{2^{\ell-r} (\ell-r)! (2r-\ell)!} k^{\ell-1} - \frac{(1+o(1)) h(|A|)}{2^{\ell-r} (\ell-r)! (2r-\ell)!} k^{\ell-1} \\
        & = \frac{h(m-1) - h(|A|)}{2^{\ell-r} (\ell-r)! (2r-\ell)!} k^{\ell-1} + o(k^{\ell-1}) 
        \ge \frac{1}{2^{\ell-r} (\ell-r)! (2r-\ell)!} k^{\ell-1} + o(k^{\ell-1}) 
        \ge \varepsilon n^{\ell-1}. 
    \end{align*}
    This shows that~\eqref{cond:S2} holds.   
    So it follows from Theorem~\ref{THM:Stability-Verification} that the inducibility problem for $F$ is perfectly stable, which completes the proof of Theorem~\ref{THM:almost-balanced-perfect-stability} for $\ell \le 2r-1$.
\end{proof}

%%%%%%%%%%%%%%%%%%%%%%%%%%%%%%%%%%%
\subsection{Proof of Theorem~\ref{THM:almost-balanced-perfect-stability} for $\ell \ge 2r$}\label{SUBSEC:proof-perfect-stability-large}
In this subsection, we prove Theorem~\ref{THM:almost-balanced-perfect-stability} for the case $\ell \ge 2r$.
The proof parallels that of the previous section, with several steps simplified by the fact that $a_r \ge 2$ (which follows from $\ell \ge 2r$).

\begin{lemma}\label{LEMMA:stable-large-inequality}
    Let $\ell, r$ be integers such that $r\ge 2$ and $\ell \ge 2r$. 
    Let $m = m_{r,\ell}$ be the integer given by Lemma~\ref{Lemma:Unique-Maximizer}. 
    Then the discrete function $H \colon [0, m] \to \mathbb{Z}$ defined by 
    \begin{align*}%\label{eq:def-h(q)}
        H(q)
        \coloneqq (m-q)(q)_{r-1}, 
        \quad\text{for every}\quad q \in [0, m], 
        % \begin{cases}
        %     0, & \quad\text{if}\quad q < r-1, \\
        %     (m-q)(q)_{r-1}, & \quad\text{if}\quad q \in [r-1, m], 
        % \end{cases}
    \end{align*}
    satisfies $H(q) < H(m-1)$ for all integers $q \in [0, m] \setminus \{m-1\}$.
\end{lemma}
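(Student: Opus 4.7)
The plan is to split the verification into three cases depending on the location of $q$, and then reduce the main case to a single inequality that is controlled by the upper bound on $m$. For $q \in [0, r-2]$, the convention $(q)_{r-1} = 0$ gives $H(q) = 0$, while $H(m-1) = (m-1)_{r-1} > 0$ since Lemma~\ref{Lemma:m-bounds}~\ref{Lemma:m-lower bound} guarantees $m \ge r$; likewise $H(m) = 0 < H(m-1)$. It therefore remains to show $H(q) < H(m-1)$ for $q \in [r-1, m-2]$, and I would establish this by proving that $H$ is strictly increasing on the integer interval $[r-1, m-1]$.

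To analyze the monotonicity, I would use the identities $(q+1)_{r-1} = (q+1)(q)_{r-2}$ and $(q)_{r-1} = (q-r+2)(q)_{r-2}$ to compute
\[
H(q+1) - H(q)
= (q)_{r-2}\bigl[(m-q-1)(q+1) - (m-q)(q-r+2)\bigr]
= (q)_{r-2}\bigl[(m-q)(r-1) - (q+1)\bigr].
\]
Since $(q)_{r-2} > 0$ whenever $q \ge r-1$, strict increase at $q$ is equivalent to $(m-q)(r-1) > q+1$, i.e., $q < (m(r-1)-1)/r$. The most restrictive instance is $q = m-2$, which after routine rearrangement is equivalent to the single condition $m < 2r-1$; once this is known, $H(q+1) > H(q)$ throughout $q \in [r-1, m-2]$ and the lemma follows.

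The main obstacle is therefore to establish the upper bound $m \le 2r-2$, and this is where the hypothesis $\ell \ge 2r$ enters. I would apply Lemma~\ref{Lemma:m-bounds}~\ref{Lemma:m-upper-bound} to get $m < r\ell^2/(\ell^2-r^2)$. Observing that $\ell^2/(\ell^2-r^2) = 1 + r^2/(\ell^2-r^2)$ is decreasing in $\ell$ and that $\ell \ge 2r$, we deduce
\[
m < \frac{r\ell^2}{\ell^2 - r^2} \le \frac{r \cdot (2r)^2}{(2r)^2 - r^2} = \frac{4r}{3}.
\]
A direct check shows $4r/3 < 2r - 1$ for every integer $r \ge 2$ (equivalent to $r > 3/2$), so $m \le 2r-2$, which yields the required strict increase and completes the proof.
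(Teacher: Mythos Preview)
Your proof is correct and follows essentially the same route as the paper: dispose of the trivial cases $q\le r-2$ and $q=m$, show that $H$ is strictly increasing on the integer interval $[r-1,m-1]$ via the factorization $H(q+1)-H(q)=(q)_{r-2}\bigl[(m-q)(r-1)-(q+1)\bigr]$, and control the bracket using the bound $m<4r/3$ from Lemma~\ref{Lemma:m-bounds}~\ref{Lemma:m-upper-bound}. The paper writes the difference as $H(q)-H(q-1)=(q-1)_{r-2}\bigl[(r-1)(m+1)-rq\bigr]$ and bounds the bracket directly by $3r-1-m>5r/3-1>0$, whereas you first isolate the equivalent condition $m<2r-1$ and then verify $4r/3<2r-1$; these are the same computation presented differently.
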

\begin{proof}
    The inequality $H(q) < H(m-1)$ is clear for $q = m$ and $q \le r-2$ since $H(q) = 0$ in these cases. 
    For the remaining case, it suffices to show that $H(q)$ is increasing in $q$ for $q \in [r-1, m-2]$. 

    It follows from Lemma~\ref{Lemma:m-bounds}~\ref{Lemma:m-upper-bound} that 
    \begin{align}\label{eq:m<4r/3}
        m 
        < \frac{r\ell^2}{\ell^2-r^2}
        \le \frac{r(2r)^2}{(2r)^2-r^2}
        = \frac{4r}{3}. 
    \end{align}
    
    Fix $q \in [r, m-1]$. Then we have 
    \begin{align*}
        H(q) - H(q-1)
        & = (m-q)(q)_{r-1} - (m-q+1)(q-1)_{r-1} \\
        & = (q-1)_{r-2} \big((m-q)q - (m-q+1)(q-r+1)\big) \\
        & = (q-1)_{r-2} \big((r-1)(m+1)-rq\big) \\
        & \ge (q-1)_{r-2} \big(3r-1-m\big)
        > (q-1)_{r-2} \left(\tfrac{5r}{3}-1\right)
        > 0. 
    \end{align*}
    Here, in the first inequality we used the assumption that $q \le m-2$, in the second inequality we used~\eqref{eq:m<4r/3}. 
    This completes the proof of Lemma~\ref{LEMMA:stable-large-inequality}. 
\end{proof}

\begin{proof}[\bf Proof of Theorem~\ref{THM:almost-balanced-perfect-stability}]
    Let $F = K_{a_1, \ldots, a_{r}}$ be an almost balanced complete $r$-partite graph on $\ell \geq 2r$ vertices, that is, $\ell = a_1 + \cdots + a_r$.
    Let $m = m_{r,\ell}$ be the integer given by Lemma~\ref{Lemma:Unique-Maximizer}. 
    Since $\ell \ge 2r$, it follows from Lemma~\ref{Lemma:m-bounds}~\ref{Lemma:m-upper-bound} that~\eqref{eq:m<4r/3} holds. 
    % It follows from Lemma~\ref{Lemma:m-bounds}~\ref{Lemma:m-upper-bound} that 
    % \begin{align}\label{eq:m<4r/3}
    %     m 
    %     < \frac{r\ell^2}{\ell^2-r^2}
    %     \le \frac{r(2r)^2}{(2r)^2-r^2}
    %     = \frac{4r}{3}. 
    % \end{align}
    
    Assume that $a_1 \ge \cdots \ge a_r$. By Fact~\ref{FACT:almost-balanced-sizes}~\ref{FACT:almost-balanced-sizes-2}, we have $a_r \geq 2$.
    Let $b_1 > \cdots > b_t$ denote the distinct elements of the multiple set $\multiset{a_1, \ldots, a_r}$, occurring with multiplicities $r_1, \ldots, r_t$, respectively. 
    
    It follows from Theorem~\ref{THM:almost-balanced-asymptotic} that $\OPT(F) = \{ \bm{m} \}$. 
    Thus, by Theorem~\ref{THM:Stability-Verification}, it suffices to verify~\eqref{cond:S1} and~\eqref{cond:S2} for some constant $\varepsilon > 0$ and all sufficiently large $n$. 
    
    Fix a sufficiently large $n$ and let $k \coloneqq \lfloor n/m \rfloor$. For simplicity, we will use $o(\cdot)$ to denote lower-order terms.
    % and assume that $n = km$ for some integer $k$ (i.e., $n$ is divisible by $m$). It will be clear from the proof that this divisibility assumption has no substantive effect; it is included only to allow us to omit floor and ceiling signs. 
    
    Let $G \coloneqq G_{n,\bm{m}}$ be the $n$-vertex realization with parts $V_1, \ldots, V_m$, where $|V_i| = k$ for all $i \in [m]$.
    Let 
    \begin{align*}
        \varepsilon 
        \coloneqq \Big(4 m^{\ell+1}\cdot\mathrm{sym}(a_1,\ldots,a_r)\cdot a_1! \cdots a_{r}!\Big)^{-1}. 
    \end{align*}
    We first verify~\eqref{cond:S1} in the following two claims. 
    
    \begin{claim}\label{CLAIM:large-stable-S1-same}
        Suppose that $\{x,y\} \subseteq V_{i}$ for some $i \in [m]$. Then 
        \[
            I(F, G) - I(F, G \oplus xy) \geq \varepsilon n^{\ell-2}.
        \]
    \end{claim}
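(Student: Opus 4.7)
The plan is to reduce the claim to two substeps: showing that $|\mathcal{B}| = 0$ and lower-bounding $|\mathcal{A}|$, where $\mathcal{A}$ and $\mathcal{B}$ denote the collections of induced copies of $F$ containing both $x$ and $y$ in $G$ and in $G \oplus xy$, respectively. Since the edit only flips the single pair $\{x, y\}$, every induced $F$-copy missing at least one of $x, y$ contributes equally to $I(F, G)$ and $I(F, G \oplus xy)$, so $I(F, G) - I(F, G \oplus xy) = |\mathcal{A}| - |\mathcal{B}|$.

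First, I would establish $|\mathcal{B}| = 0$ via a structural argument. Assume without loss of generality that $\{x, y\} \subseteq V_1$. In $G \oplus xy$ the pair $\{x, y\}$ is now an edge, so $x$ and $y$ must lie in distinct parts of any induced $F$-copy containing them. However, every vertex $v \in V_1 \setminus \{x, y\}$ remains non-adjacent to both $x$ and $y$ (the edit flips only the single edge $\{x, y\}$), which would force $v$ to lie simultaneously in the same $F$-part as $x$ and in the same $F$-part as $y$, contradicting that these parts are distinct. Hence no vertex of $V_1 \setminus \{x, y\}$ appears in the copy, so the $F$-part containing $x$ is the singleton $\{x\}$, meaning $a_j = 1$ for some $j \in [r]$. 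This contradicts $a_r \geq 2$, which holds by Fact~\ref{FACT:almost-balanced-sizes}~\ref{FACT:almost-balanced-sizes-2} because $\ell \geq 2r$.

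Next, I would lower-bound $|\mathcal{A}|$ by a double-counting argument that exploits the vertex-transitivity of $G$ on non-adjacent pairs. Since every part of $G$ has size $k$, the automorphism group of $G$ acts transitively on the set of $m \binom{k}{2}$ non-adjacent pairs, so every such pair is contained in the same number of induced $F$-copies. Each induced $F$-copy contributes exactly $\sum_{j=1}^{r} \binom{a_j}{2}$ non-adjacent pairs (those within its $F$-parts), whence
\begin{align*}
    |\mathcal{A}|
    = \frac{\left(\sum_{j=1}^{r} \binom{a_j}{2}\right) \cdot I(F, G)}{m \binom{k}{2}}.
\end{align*}
Combining this with the asymptotic $I(F, G) = (1 + o(1)) \frac{(m)_{r}}{a_1! \cdots a_{r}! \cdot \mathrm{sym}(a_1, \ldots, a_{r})} n^{\ell}$ from Fact~\ref{FACT:Directly-Computation-handy}, together with $\sum_j \binom{a_j}{2} \geq r \binom{a_r}{2} \geq r \geq 2$ (using $a_r \geq 2$) and $n = (1 + o(1))mk$, yields $|\mathcal{A}| \geq c \cdot n^{\ell - 2}$ for a positive constant $c$ depending only on $F$ and $m$. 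For the choice of $\varepsilon$ in the theorem and all sufficiently large $n$, this gives $|\mathcal{A}| \geq \varepsilon n^{\ell - 2}$, as required.

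The main non-routine step is the structural argument showing $|\mathcal{B}| = 0$; the remaining computation of $|\mathcal{A}|$ is straightforward asymptotics. What makes this case considerably cleaner than the $\ell < 2r$ case of the preceding subsection is precisely the hypothesis $a_r \geq 2$: it rules out any induced $F$-copy in $G \oplus xy$ through $\{x,y\}$, so there is no positive ``gain'' term to cancel against the loss, and the sign of the difference is automatic.
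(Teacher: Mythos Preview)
Your proof is correct. The argument for $|\mathcal{B}| = 0$ is the same as the paper's (just spelled out in more detail), but your lower bound for $|\mathcal{A}|$ takes a genuinely different route. The paper lower-bounds $|\mathcal{A}|$ by counting only those copies in which $\{x,y\}$ lies in an $F$-part of size $a_1$ contained entirely in $V_1$, obtaining $\binom{k-2}{a_1-2}\cdot I(K_{a_2,\ldots,a_r},\,G[V_2\cup\cdots\cup V_m])$ and then applying Fact~\ref{FACT:Directly-Computation-handy} to that smaller instance. You instead exploit the transitivity of $\mathrm{Aut}(G)$ on non-adjacent pairs to double-count and obtain the \emph{exact} value $|\mathcal{A}| = \bigl(\sum_j \binom{a_j}{2}\bigr) I(F,G)\big/\bigl(m\binom{k}{2}\bigr)$. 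Your approach is cleaner and yields a sharper count, at the cost of invoking the global asymptotic for $I(F,G)$; the paper's approach is more hands-on but entirely local.

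One small slip: in Fact~\ref{FACT:Directly-Computation-handy} the symbol $n$ denotes the common part size, so the correct asymptotic in the present context is $I(F,G) = (1+o(1))\frac{(m)_r}{a_1!\cdots a_r!\cdot\mathrm{sym}(a_1,\ldots,a_r)}\,k^{\ell}$, not $n^{\ell}$. This is harmless for your argument, since you only need $|\mathcal{A}|\ge c\,n^{\ell-2}$ for some positive constant $c$ and then compare with the specific $\varepsilon$ fixed in the proof.
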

    \begin{proof}
        By symmetry, we may assume that $\{x,y\} \subseteq V_{1}$. 
        Since each part of $F$ has size at least two, there is no induced copy of $F$ in $G \oplus xy$ containing $\{x,y\}$. 
        However, it is clear that the number of induced copies of $F$ in $G$ that contain $\{x,y\}$ and with exactly $a_{1}$ vertices in $V_1$ is at least 
        \begin{align*}
            \tbinom{|V_1|-2}{a_1-2} \cdot I(K_{a_2, \ldots, a_{r}}, K[V_2, \ldots, V_{m}]). 
        \end{align*}
        By Fact~\ref{FACT:Directly-Computation-handy}, this is 
        \begin{align*}
            (1+o(1)) \frac{1}{(a_1-2)!} k^{a_1-2} \frac{(m-1)_{r-1}}{a_2! \cdots a_{r!} \cdot \mathrm{sym}(a_2, \ldots, a_{r})} k^{a_2+ \cdots +a_{r}}
            > \varepsilon n^{\ell-2}. 
        \end{align*}
        It follows that $I(F, G) - I(F, G \oplus xy) \geq \varepsilon n^{\ell-2}$, which proves Claim~\ref{CLAIM:large-stable-S1-same}. 
    \end{proof}%CLAIM

    \begin{claim}\label{CLAIM:large-stable-S1-different}
        Suppose that $(x,y) \in V_i \times V_j$ for distinct $i,j \in [m]$. Then 
        \[
            I(F, G) - I(F, G \oplus xy) \geq \varepsilon n^{\ell-2}.
        \]
    \end{claim}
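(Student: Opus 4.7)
My plan is to follow the same counting pattern as Claim~\ref{CLAIM:small-stable-S1-distinct}, exploiting the structural fact $a_r\ge 2$ (Fact~\ref{FACT:almost-balanced-sizes}~\ref{FACT:almost-balanced-sizes-2}) to streamline the case analysis, and ultimately reducing the desired lower bound to a single algebraic inequality on $m$ that I will dispatch via Lemma~\ref{Lemma:m-bounds}~\ref{Lemma:m-upper-bound}. By symmetry assume $(x,y)\in V_1\times V_2$, so $xy\in E(G)$ while $xy\notin E(G\oplus xy)$; since only induced $F$-copies containing both $x$ and $y$ are affected by the flip, it suffices to bound $N_1-N_2$ from below, where $N_1$ and $N_2$ count such copies in $G$ and $G\oplus xy$, respectively. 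Write $\tilde G\coloneqq G[V_3\cup\cdots\cup V_m]$ and $r_2\coloneqq |\{p\in[r]:a_p=2\}|$.

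For $N_1$: in an $F$-copy of $G$ through $\{x,y\}$, the vertices $x$ and $y$ lie in distinct $F$-parts, say of sizes $a_p$ and $a_q$. The remaining $a_p-1$ vertices of the $x$-part are non-adjacent to $x$ and hence lie in $V_1$; symmetrically the rest of the $y$-part lies in $V_2$. No further vertex may lie in $V_1\cup V_2$ without breaking an adjacency with $x$ or $y$, so the remaining $r-2$ parts embed into distinct classes of $\tilde G$. Summing over ordered pairs $(p,q)\in([r])_2$ and dividing by $\mathrm{sym}(a_1,\ldots,a_r)$, a Fact~\ref{FACT:Directly-Computation-handy}-style estimate yields
\begin{align*}
    N_1 = (1+o(1))\,\frac{(m-2)_{r-2}\bigl(\ell^2-\sum_{p=1}^{r} a_p^2\bigr)}{\mathrm{sym}(a_1,\ldots,a_r)\cdot a_1!\cdots a_r!}\,k^{\ell-2}.
\end{align*}
For $N_2$: since $xy$ is now a non-edge, $\{x,y\}$ must sit inside a single $F$-part, but any third vertex of that part would be non-adjacent to both $x$ and $y$, forcing it into $V_1\cap V_2=\varnothing$, a contradiction. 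Hence that part equals $\{x,y\}$ (so $N_2=0$ unless $r_2\ge 1$), and the remaining $\ell-2$ vertices lie in $\tilde G$ and induce the graph $F'$ obtained from $F$ by removing one part of size $2$. Applying Fact~\ref{FACT:Directly-Computation-handy} to $F'$ in $\tilde G$ and using $\mathrm{sym}(F')=\mathrm{sym}(a_1,\ldots,a_r)/r_2$, I obtain
\begin{align*}
    N_2 = (1+o(1))\,\frac{2r_2\,(m-2)_{r-1}}{\mathrm{sym}(a_1,\ldots,a_r)\cdot a_1!\cdots a_r!}\,k^{\ell-2}.
\end{align*}

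Combining the two asymptotics and using $(m-2)_{r-1}=(m-r)(m-2)_{r-2}$, the claim reduces to showing that the $n$-independent constant
\begin{align*}
    \Lambda \coloneqq \ell^2-\sum_{p=1}^{r} a_p^2 - 2r_2(m-r)
\end{align*}
is strictly positive, since then $I(F,G)-I(F,G\oplus xy)\ge \varepsilon n^{\ell-2}$ will follow for large $n$ after adjusting $\varepsilon$. The case $r_2=0$ is immediate, as $\Lambda=\sum_{p\neq q}a_pa_q>0$. The main obstacle is the case $r_2\ge 1$: substituting $a_p=2+b_p$ I get the identity $\ell^2-\sum a_p^2 = 4(r-1)(\ell-r)+\sum_{p\neq q}b_pb_q$, so bounding $r_2\le r$ reduces positivity of $\Lambda$ to $m<r+2(r-1)(\ell-r)/r$. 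To close the argument, I will chain this with the upper bound $m<r\ell^2/(\ell^2-r^2)$ from Lemma~\ref{Lemma:m-bounds}~\ref{Lemma:m-upper-bound}; the resulting polynomial inequality $r^4\le 2(r-1)(\ell-r)^2(\ell+r)$ is elementary to verify for $r\ge 2$ and $\ell\ge 2r$ using $(\ell-r)^2\ge r^2$, $\ell+r\ge 3r$, and $r\le 6(r-1)$.
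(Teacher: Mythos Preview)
Your proof is correct and follows essentially the same route as the paper: the counts $N_1$ and $N_2$ coincide with the paper's $|\mathcal{C}|$ and $|\mathcal{C}'|$, and the reduction to the positivity of $\Lambda=\sum_{(p,q)\in([r])_2}a_pa_q-2r_2(m-r)$ is identical. The only cosmetic difference is in the final algebraic step: the paper uses the cruder consequence $m<4r/3$ of Lemma~\ref{Lemma:m-bounds}\ref{Lemma:m-upper-bound} together with $\sum_{(p,q)}a_pa_q\ge 4r(r-1)$ to get $\Lambda\ge \tfrac{2}{3}r(5r-6)$ in one line, whereas you keep the sharper bound $m<r\ell^2/(\ell^2-r^2)$ and verify an equivalent polynomial inequality.
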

    \begin{proof}
        By symmetry, we may assume that $(x,y) \in V_1 \times V_2$. 
        Recall that $b_1 > \cdots > b_t$ are the distinct elements of the multiple set $\multiset{a_1, \ldots, a_r}$, occurring with multiplicities $r_1, \ldots, r_t$, respectively. 
        
        Let $\mathcal{C}$ denote the collection of all induced copies of $F$ in $G$ that contain $\{x,y\}$.
        For $\{i,j\} \subseteq [t]$, let $\mathcal{C}_{i,j}$ denote the collection of all induced copies of $F$ in $G$ that contain $\{x,y\}$ with $x$ lying in a part of size $b_i$ and $y$ lying in a part of size $b_j$.
        For each $i \in [t]$ with $r_i \ge 2$, let $\mathcal{C}_i$ denote the collection of all induced copies of $F$ in $G$ that contain $\{x,y\}$ with both $x$ and $y$ lying in parts of size $b_i$.

        For each $\{i,j\} \subseteq [t]$, let $F_{i,j}$ denote the $(r-2)$-partite subgraph of $F$ obtained by removing one part of size $b_i$ and one part of size $b_j$.
        Then, by Fact~\ref{FACT:Directly-Computation-handy}, we have
        \begin{align*}
            |\mathcal{C}_{i,j}|
            & = \tbinom{|V_1|-1}{b_i - 1} \tbinom{|V_2|-1}{b_j - 1} \cdot I(F_{i,j}, G[V_3, \ldots, V_{m}]) \\
            & = (1+o(1)) \frac{k^{b_i-1+b_j-1}}{(b_i-1)! (b_j-1)!} \frac{(m-2)_{r-2}}{(b_i! b_j!)^{-1} \prod_{x\in [t]} (b_x !)^{r_x} \cdot (r_ir_j)^{-1} \prod_{x\in [t]} r_x!} k^{\ell - b_i -b_j} \\
            & = (1+o(1))  \frac{(m-2)_{r-2}}{\prod_{x\in [t]} (b_x !)^{r_x} \cdot \prod_{x\in [t]} r_x!} r_i b_i r_j b_j k^{\ell - 2}. 
        \end{align*}
        For each $i \in [t]$ with $r_i \ge 2$, let $F_{i}$ denote the $(r-2)$-partite subgraph of $F$ obtained by removing two parts of size $b_i$.
        Then, by Fact~\ref{FACT:Directly-Computation-handy}, we have
        \begin{align*}
            |\mathcal{C}_{i}|
            & = \tbinom{|V_1|-1}{b_i - 1} \tbinom{|V_2|-1}{b_i - 1} \cdot I(F_{i}, G[V_3, \ldots, V_{m}]) \\
            & = (1+o(1)) \frac{k^{2b_i-2}}{((b_i-1)!)^2} \frac{(m-2)_{r-2}}{(b_i!)^{-2} \prod_{x\in [t]} (b_x !)^{r_x} \cdot (r_i (r_i-1))^{-1} \prod_{x\in [t]} r_x!} k^{\ell - 2b_i} \\
            & = (1+o(1)) \frac{(m-2)_{r-2}}{ \prod_{x\in [t]} (b_x !)^{r_x} \cdot \prod_{x\in [t]} r_x!} r_i (r_i-1) b_i^2  k^{\ell - 2b_i}. 
        \end{align*}
        Therefore, 
        \begin{align}\label{equ:large-C-size}
            |\mathcal{C}|
            & =  \frac{(1+o(1)) (m-2)_{r-2}}{\prod_{x\in [t]} (b_x !)^{r_x} \cdot \prod_{x\in [t]} r_x!}  k^{\ell - 2} \Big(\sum_{(i,j) \in ([t])_{2}} r_i b_i r_j b_j + \sum_{i \in [t], r_i \ge 2} r_i (r_i-1) b_i^2 \Big) \notag \\
            & = \frac{(1+o(1)) (m-2)_{r-2}}{a_1! \cdots a_r! \cdot \mathrm{sym}(a_1, \ldots, a_r)}  k^{\ell - 2} \sum_{(i,j) \in ([r])_{2}} a_i a_j. 
        \end{align}

        Let $\mathcal{C}'$ denote the collection of all induced copies of $F$ in $G \oplus xy$ that contain $\{x,y\}$. 
        Let $\omega$ denote the number of parts of size two in $F$. 
        Since $a_i \ge 2$ for every $i \in [r]$, we have (using~\eqref{eq:m<4r/3})
        \begin{align}\label{equ:large-stable-1750}
            \sum_{(i,j) \in ([r])_{2}} a_i a_j - 2(m-r)\omega
            > 4r(r-1) - 2 \left(\frac{4r}{3} - r\right) r
            = \frac{2}{3} r (5r-6). 
        \end{align}
        Suppose that $\omega = 0$.  
        Since every part of $F$ has size at least $3$ and $\{x,y\}$ in not and edge in $G\oplus xy$, there is no induced copy of $F$ in $G \oplus xy$, and hence, $|\mathcal{C}'| = 0$.
        
        Suppose that $\omega \ge 1$ (i.e. $a_r = 2$), then $b_t =2$ and $r_t = \omega$. 
        Note that in this case, the size of $\mathcal{C}'$ coincides with the value of $I(K_{a_1, \ldots, a_{r-1}}, K[V_3, \ldots, V_{m}])$, which, by Fact~\ref{FACT:Directly-Computation-handy}, is 
        \begin{align*}
             \frac{(1+o(1)) (m-2)_{r-1}}{a_1! \cdots a_{r-1}! \cdot \mathrm{sym}(a_1, \ldots, a_{r-1})} k^{\ell-2}
             = \frac{(1+o(1)) (m-2)_{r-2}}{a_1! \cdots a_{r}! \cdot \mathrm{sym}(a_1, \ldots, a_{r})} k^{\ell-2} 2 (m-r) r_t. 
        \end{align*}
        Combining this with~\eqref{equ:large-C-size} and~\eqref{equ:large-stable-1750}, we obtain 
        \begin{align*}
            I(F,G)-I(F,G\oplus xy)
            & = \frac{(1+o(1)) (m-2)_{r-2}}{a_1! \cdots a_{r}! \cdot \mathrm{sym}(a_1, \ldots, a_{r})} k^{\ell-2} \Big(\sum_{(i,j) \in ([r])_{2}} a_i a_j - 2(m-r) \omega \Big) \\
            & \ge \frac{(1+o(1)) 2r(5r-6) (m-2)_{r-2}}{3 a_1! \cdots a_{r}! \cdot \mathrm{sym}(a_1, \ldots, a_{r})} k^{\ell-2}
            \ge \varepsilon n^{\ell-2}. 
        \end{align*}
        This completes the proof of Claim~\ref{CLAIM:large-stable-S1-different}.
    \end{proof}%CLAIM

    By Claims~\ref{CLAIM:large-stable-S1-same} and~\ref{CLAIM:large-stable-S1-different},~\eqref{cond:S1} holds.  
    So it remains to verify~\eqref{cond:S2}.
    
    \begin{claim}\label{CLAIM:large-stable-S2}
        Suppose that $B \subseteq [m]$ is a set of size $q \in [0, m]$. 
        Then the number of induced copies of $F$ in $G_B$ that contain $v_{\ast}$ is
        \begin{align*}
            \frac{(1+o(1)) \ell \cdot H(q)}{\prod_{i\in [r]}a_i! \cdot \mathrm{sym}(a_1, \ldots, a_{r})} k^{\ell-1}, 
        \end{align*}
        where $H \colon [m] \to \mathbb{Z}$ is the discrete function defined in Lemma~\ref{LEMMA:stable-large-inequality}. 
    \end{claim}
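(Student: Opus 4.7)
The plan is to count (ordered) induced copies of $F$ in $G_B$ that contain $v_{\ast}$ by summing over which part of $F$ the vertex $v_{\ast}$ plays the role of, and then to divide by $\mathrm{sym}(a_1,\ldots,a_r)$ to pass from ordered configurations to unlabelled induced copies. Fix an ordering $P_1,\ldots,P_r$ of the parts of $F$ with $|P_j|=a_j$. By Fact~\ref{FACT:almost-balanced-sizes}\ref{FACT:almost-balanced-sizes-2}, every $a_j \ge 2$, so $v_{\ast}$ necessarily lies in some part $P_j$ of the induced copy.

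Assuming $v_{\ast}\in P_j$, the adjacency pattern of $v_{\ast}$ in $G_B$ combined with the complete multipartite structure of $G$ forces a rigid shape on the remaining vertices. The $a_j-1$ other vertices of $P_j$ must be both non-adjacent to $v_{\ast}$ and mutually non-adjacent, so they all sit inside a single class $V_{i_0}$ with $i_0\notin B$; for every $s\ne j$, the part $P_s$ is an independent set each of whose vertices is adjacent to $v_{\ast}$, so it lies inside a single $V_{i_s}$ with $i_s\in B$; and distinctness of the indices $(i_s)_{s\ne j}$ follows from the fact that vertices in distinct parts of $F$ must be adjacent while vertices in the same $V_i$ are not.

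Counting these choices independently, the number of ordered configurations is
\[
\sum_{j=1}^{r}(m-q)\binom{k}{a_j-1}(q)_{r-1}\prod_{s\ne j}\binom{k}{a_s}.
\]
Applying $\binom{k}{c}=(1+o(1))\frac{k^c}{c!}$, the $j$-th summand becomes $(1+o(1))\frac{a_j(m-q)(q)_{r-1}}{a_1!\cdots a_r!}\,k^{\ell-1}$. Summing over $j\in[r]$ and invoking $a_1+\cdots+a_r=\ell$, then dividing by $\mathrm{sym}(a_1,\ldots,a_r)$, yields exactly
\[
(1+o(1))\,\frac{\ell\cdot H(q)}{\prod_{i\in[r]}a_i!\cdot\mathrm{sym}(a_1,\ldots,a_r)}\,k^{\ell-1},
\]
with $H(q)=(m-q)(q)_{r-1}$, as claimed.

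There is no substantial obstacle here: the argument is essentially combinatorial bookkeeping. The only step that requires genuine care is the ``single $V_i$ per part'' rigidity in the second paragraph, whose validity rests on each part of $F$ having size at least two (so that $v_{\ast}$ genuinely belongs to a non-trivial part) and on the interplay between the adjacency pattern of $v_{\ast}$ in $G_B$ and the complete multipartite structure of $G$. Once that is established, the factor $\ell$ in the final answer emerges cleanly from the identity $\sum_{j}a_j=\ell$ when summing over the choice of which part contains $v_{\ast}$.
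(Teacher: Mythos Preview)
Your proof is correct and follows essentially the same approach as the paper: both arguments split the count according to which part of $F$ contains $v_\ast$, observe that the remainder of that part must occupy a single class $V_{i_0}$ with $i_0\notin B$ while each of the other $r-1$ parts occupies a distinct class indexed by $B$, and then extract the factor $\ell$ via $\sum_j a_j=\ell$. The only cosmetic difference is that the paper groups the sum by the \emph{distinct} part sizes $b_1>\cdots>b_t$ (with multiplicities $r_i$) and appeals to Fact~\ref{FACT:Directly-Computation-handy}, whereas you sum directly over all labels $j\in[r]$ and divide by $\mathrm{sym}(a_1,\ldots,a_r)$ at the end; the identity $\sum_i r_i b_i=\ell$ in the paper is exactly your $\sum_j a_j=\ell$.
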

    \begin{proof} 
        Fix a subset $B \subseteq [m]$ with $|B| = q$. 
        Recall that $G_{B}$ is the graph obtained from $G$ by adding a new vertex $v_{\ast}$ that is adjacent to precisely all vertices in $\bigcup_{i \in B} V_i$. 
        By symmetry, we may assume that $B = \{1, \ldots, q\}$. 

        Let $\tilde{\mathcal{C}}_{q}$ denote the collection of all induced copies of $F$ in $G_B$ that contain $v_{\ast}$. 
        Since $F$ is complete $r$-partite and $a_r \ge 2$, if $q \in [r-2] \cup \{m\}$, then $|\tilde{\mathcal{C}}_{q}| = 0$. 
        Thus, we may assume that $q \in [r-1, m-1]$. 

        Recall that $b_1 > \cdots > b_t$ are the distinct elements of the multiple set $\multiset{a_1, \ldots, a_r}$, occurring with multiplicities $r_1, \ldots, r_t$, respectively. 
        For each $i \in [t]$, let $F_{i}'$ denote the $(r-1)$-partite subgraph of $F$ by removing a part of size $b_i$, and let $\tilde{\mathcal{C}}_{q}^{i}$ denote the collection of all induced copies of $F$ in $G_B$ that contain $v_{\ast}$ such that $v_{\ast}$ lying in a part of size $b_i$. 
        Then the size $\tilde{\mathcal{C}}_{q}^{i}$ coincides with the value of $\sum_{j = q+1}^{m} \binom{|V_j|}{b_i-1} \cdot I(F_{i}', K[V_1, \ldots, V_q])$, which, by Fact~\ref{FACT:Directly-Computation-handy}, is 
        \[
            (1+o(1)) (m-q) \frac{k^{b_{i}-1}}{(b_i-1)!} \cdot \frac{(q)_{r-1}}{(b_i!)^{-1} \prod_{i\in [r]}a_i! \cdot r_i^{-1} \cdot \mathrm{sym}(a_1, \ldots, a_{r})} k^{\ell-b_i}
            =  \frac{(1+o(1)) (m-q) (q)_{r-1}}{\prod_{i\in [r]}a_i! \cdot \mathrm{sym}(a_1, \ldots, a_{r})} r_i b_i k^{\ell-1}. 
        \]
        It follows that 
        \begin{align*}
            |\tilde{\mathcal{C}}_{q}|
            & =  \frac{(1+o(1)) (m-q) (q)_{r-1}}{\prod_{i\in [r]}a_i! \cdot \mathrm{sym}(a_1, \ldots, a_{r})} k^{\ell-1} \cdot \sum_{i \in [t]}r_i b_i 
            = \frac{(1+o(1)) \ell (m-q) (q)_{r-1}}{\prod_{i\in [r]}a_i! \cdot \mathrm{sym}(a_1, \ldots, a_{r})} k^{\ell-1}. 
        \end{align*}
        This completes the proof of Claim~\ref{CLAIM:large-stable-S2}. 
    \end{proof}%CLAIM

    Fix $A \subseteq [m]$ with $|A| \neq m-1$, and let 
    \begin{align*}
        \Delta
        \coloneqq \min\big\{ I(F, G_{A^{\ast}}) \colon A^{\ast} \subseteq [m] \text{ and } |A^{\ast}| = m-1 \big\} - I(F, G_A). 
    \end{align*}
    It follows from Claim~\ref{CLAIM:large-stable-S2} and Lemma~\ref{LEMMA:stable-large-inequality} that 
    \begin{align*}
        \Delta
        & = \frac{(1+o(1)) \ell}{\prod_{i\in [r]}a_i! \cdot \mathrm{sym}(a_1, \ldots, a_{r})} k^{\ell-1} \big(H(m-1) - H(q)\big)
        \ge \frac{(1+o(1)) \ell}{\prod_{i\in [r]}a_i! \cdot \mathrm{sym}(a_1, \ldots, a_{r})} k^{\ell-1} 
        \ge \varepsilon n^{\ell-1}. 
    \end{align*}
    This proves \eqref{cond:S2}.     
    By Theorem~\ref{THM:Stability-Verification}, the inducibility problem for $F$ is perfectly stable, proving Theorem~\ref{THM:almost-balanced-perfect-stability}. 
\end{proof}

%%%%%%%%%%%%%%%%%%%%%%%%%%%%%%%%%%%%
\section{Exact results and uniqueness of the extremal construction}\label{Sec:Strongly-balanced-exact}
In this section, we show that, for large $n$, the extremal graph for $I(F,n)$ is the Tur\'an graph if $F$ is an almost balanced complete multipartite graph, thus completing the proof of Theorem~\ref{THM:almost-balanced-exact}.
We begin with the following two lemmas.

\begin{lemma}\label{LEMMA:exact-Mean-Value}
    Let $t \ge s \ge 1$ be integers. Suppose $x, y, N$ are real numbers satisfying $y > x+1$ and $\max\{|x - N|,~|y - N|\} = o(N)$ as $N \to \infty$.
    Then
    \begin{align}
        \tbinom{x+1}{s} +\tbinom{y-1}{s} - \tbinom{x}{s} -\tbinom{y}{s}
        & = -\tfrac{s(s-1)}{s!} (y-x-1) N^{s - 2} + o(N^{s-2}); \label{equ:LEMMA:exact-Mean-Value-a} \\[.4em]
        \tbinom{x+1}{t}\tbinom{y-1}{s} + \tbinom{x+1}{s}\tbinom{y-1}{t} - \tbinom{x}{t}\tbinom{y}{s} - \tbinom{x}{s}\tbinom{y}{t}
        & = \tfrac{  t+s - (t-s)^2 }{s!t!} \big( y-x-1 \big) N^{t+s-2} + o(N^{t+s-2}).\label{equ:LEMMA:exact-Mean-Value-b} 
    \end{align}
\end{lemma}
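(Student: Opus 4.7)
The plan is to collapse each four-term alternating expression via Pascal's rule $\binom{z+1}{k}-\binom{z}{k}=\binom{z}{k-1}$ (valid for the generalized binomial by direct polynomial identity) and then apply the mean value theorem on the interval $[x,y-1]$, whose endpoints both lie at $N(1+o(1))$. For (a), Pascal's identity yields $\binom{x+1}{s}-\binom{x}{s}=\binom{x}{s-1}$ and $\binom{y-1}{s}-\binom{y}{s}=-\binom{y-1}{s-1}$, so the left-hand side collapses to $\binom{x}{s-1}-\binom{y-1}{s-1}$. For $s=1$ this equals $0$, matching the vanishing right-hand side. For $s\geq 2$, the MVT applied to the degree-$(s-1)$ polynomial $P(z)=\binom{z}{s-1}$ gives $\binom{x}{s-1}-\binom{y-1}{s-1}=-(y-x-1)\,P'(\xi)$ for some $\xi\in(x,y-1)$; since $\xi=N+o(N)$, one has $P'(\xi)=\frac{N^{s-2}}{(s-2)!}+o(N^{s-2})$, and the identity $\frac{s(s-1)}{s!}=\frac{1}{(s-2)!}$ delivers (a).

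For (b), applying Pascal's rule to each of $\binom{x+1}{t}-\binom{x}{t}$ and $\binom{y}{s}-\binom{y-1}{s}$, together with their analogues with $s,t$ swapped, rewrites the left-hand side as
\[
Q(x,y-1)\coloneqq \binom{x}{t-1}\binom{y-1}{s}+\binom{x}{s-1}\binom{y-1}{t}-\binom{x}{t}\binom{y-1}{s-1}-\binom{x}{s}\binom{y-1}{t-1}.
\]
The crucial observation is that $Q(z,w)$, viewed as a two-variable polynomial, is antisymmetric under $z\leftrightarrow w$, hence $Q(z,z)=0$. Applying the MVT to $w\mapsto Q(x,w)$ on $[x,y-1]$ yields $Q(x,y-1)=(y-x-1)\,\partial_w Q(x,\xi)$ for some $\xi\in(x,y-1)$. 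A direct leading-order computation using $\frac{d}{dw}\binom{w}{k}=\frac{w^{k-1}}{(k-1)!}+O(w^{k-2})$, evaluated at $x,\xi=N+o(N)$, gives
\[
\partial_w Q(x,\xi)=\frac{2ts-s(s-1)-t(t-1)}{s!\,t!}\,N^{t+s-2}+o(N^{t+s-2})=\frac{t+s-(t-s)^2}{s!\,t!}\,N^{t+s-2}+o(N^{t+s-2}),
\]
which yields (b).

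The main obstacle is the bookkeeping in (b): one must verify that the several lower-order corrections from the Taylor-type expansions combine into a genuine $o(N^{t+s-2})$ error. In the intended regime (where $y-x-1$ is bounded, as is the case in the applications in Section~\ref{Sec:Strongly-balanced-exact}), multiplying the $o(N^{t+s-2})$ remainder in $\partial_w Q$ by the factor $(y-x-1)$ preserves the bound. Beyond this, the argument reduces to Pascal's identity, the antisymmetry of $Q$, and a first-order Taylor expansion.
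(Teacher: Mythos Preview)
Your argument is correct. For part~(a) it coincides with the paper's proof: both collapse the four-term expression to $\binom{x}{s-1}-\binom{y-1}{s-1}$ via Pascal's rule and then apply the Mean Value Theorem to $z\mapsto\binom{z}{s-1}$.

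For part~(b) your route is genuinely different from the paper's. The paper does not use Pascal's identity at all; instead it factors out $\frac{(x)_{s-1}(y-1)_{s-1}}{s!\,t!}$ from the four-term expression, leaving a polynomial $g(x,y)$ which it then rewrites algebraically (with separate treatment of the cases $t-s\in\{0,1\}$) as a combination of two differences to which the Mean Value Theorem is applied via the auxiliary functions $\psi_1(z)=z(z-s)_{t-s}$ and $\psi_2(z)=(z-s)_{t-s}/z$. Your approach---reducing to $Q(x,y-1)$ by Pascal, exploiting the antisymmetry $Q(z,w)=-Q(w,z)$ to get $Q(x,x)=0$, and applying a single Mean Value Theorem to $w\mapsto Q(x,w)$---is more uniform (no case split on $t-s$) and conceptually cleaner; the paper's approach is more hands-on but makes the algebraic structure of $g$ explicit. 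Both arrive at the same leading coefficient $\frac{t+s-(t-s)^2}{s!\,t!}$, and both share the implicit reliance on $y-x-1=O(1)$ (or else an error term of the form $(y-x-1)\cdot o(N^{t+s-2})$) that you correctly flag; the paper's proof absorbs this in exactly the same way without comment.
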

\begin{proof}
    We start with the first equality. 
    The case $s=1$ is trivially true, so we may assume that $s \ge 2$. 
    Let $\phi \colon [s-1,\infty) \to \mathbb{R}$ be the function defined by $\phi(z) = \binom{z}{s-1}$ for all $z\in [s-1,\infty)$. 
    By the Mean Value Theorem, there exists $z_0 \in [x, y-1]$ (hence $|z_0-N|=o(N)$) such that
    \begin{align*}
        \tbinom{x+1}{s} + \tbinom{y-1}{s} - \tbinom{x}{s} - \tbinom{y}{s}
        = \tbinom{x}{s-1} - \tbinom{y-1}{s-1} 
        & = (x - y + 1) \phi'(z_0) \\
        & = (x-y+1) \left( \tfrac{z_0^{s-2}}{(s-2)!} +o\left(z_0^{s-2}\right) \right) 
        = -\tfrac{y-x-1}{(s-2)!} N^{s-2} +o(N^{s-2}), 
    \end{align*}
    as desired. 
    
    We now consider the second equality. 
    Let 
    \begin{align*}
        g(x,y) \coloneqq (x+1)(y-s)_{t-s+1} + (x+1)(y-s)(x-s+1)_{t-s} 
                      - (x-s+1)y(y-s)_{t-s} - y(x-s+1)_{t-s+1}.
    \end{align*}
    
    \begin{claim}\label{CLAIM:exact-Mean-Value-1928}
        We have $g(x,y) = \left(t+s - (t-s)^2\right) (y-x-1) N^{t-s} + o(N^{t-s})$. 
    \end{claim}
    \begin{proof}
        The case $t - s \in \{ 0,1 \}$ follows directly from the following fact (which itself follows from straightforward calculations):
        \begin{align*}
            g(x,y)
            = 
            \begin{cases}
                2s(y-x-1), & \quad\text{if}\quad t - s = 0, \\
                s(y-x-1)(x+y-2s), & \quad\text{if}\quad t - s = 1. 
            \end{cases}
        \end{align*}
        So we may assume that $t-s\ge 2$. 
        Simplifying $g(x,y)$, we obtain 
        \begin{align*}
            g(x,y)
            =s\bigl(y(y-s)_{t-s}-(x+1)(x+1-s)_{t-s}\bigr)
                  -t(x+1)y\Bigl(\tfrac{(y-s)_{t-s}}{y}-\tfrac{(x+1-s)_{t-s}}{x+1}\Bigr).
        \end{align*}
        Let $\psi_{1}, \psi_2 \colon [t-1, \infty) \to \mathbb{R}$ be functions defined by 
        \begin{align*}
            \psi_{1}(z) = z(z-s)_{t-s}
            \quad\text{and}\quad 
            \psi_{2}(z) = \frac{(z-s)_{t-s}}{z}
            \quad\text{for all}\quad z \in [t-1, \infty). 
        \end{align*}
        By the Mean Value Theorem, there exist $z_1,z_2 \in [x,y-1]$ such that
        \begin{align*}
            y(y-s)_{t-s}-(x+1)(x+1-s)_{t-s}
            & = (y-x-1) \psi_{1}'(z_1) \\
            & = (y-x-1) \left((t-s+1) z_1^{t-s} + o(z^{t-s})\right) \\
            & = (y-x-1)(t-s+1) N^{t-s} + o(N^{t-s}), 
        \end{align*}
        and 
        \begin{align*}
            \frac{(y-s)_{t-s}}{y}-\frac{(x+1-s)_{t-s}}{x+1}
            & = (y-x-1) \psi_{2}'(z_2) \\
            & = (y-x-1) \left((t-s-1) z_{2}^{t-s-2} + o(z^{t-s-2})\right) \\
            & = (y-x-1) (t-s-1) N^{t-s-2} + o(N^{t-s-2}). 
        \end{align*}
        Consequently, we have 
        \begin{align*}
            g(x,y)
            & = s (y-x-1)(t-s+1) N^{t-s} +t (x+1)y (y-x-1) (t-s-1) N^{t-s-2} + o(N^{t-s}) \\
            & = \left(t+s - (t-s)^2\right) (y-x-1) N^{t-s} + o(N^{t-s}),  
        \end{align*}
        which proves Claim~\ref{CLAIM:exact-Mean-Value-1928}. 
    \end{proof}%CLAIM
    It follows from Claim~\ref{CLAIM:exact-Mean-Value-1928} that 
    \begin{align*}
        \tbinom{x+1}{t}\tbinom{y-1}{s} + \tbinom{x+1}{s}\tbinom{y-1}{t} 
          - \tbinom{x}{t}\tbinom{y}{s} - \tbinom{x}{s}\tbinom{y}{t}
        & = \frac{(x)_{s-1}(y-1)_{s-1}}{t!\,s!} \cdot g(x,y) \\
        & = \frac{t+s-(t-s)^2}{s!t!}\bigl(y-x-1\bigr)N^{t+s-2} + o(N^{t+s-2}),  
    \end{align*}
    which completes the proof of Lemma~\ref{LEMMA:exact-Mean-Value}. 
\end{proof}%LEMMA

\begin{lemma}\label{LEMMA:prepare-of-exact}
    Let $F = K_{a_1, \ldots, a_{r}}$ be an almost balanced complete $r$-partite graph with $a_1 \ge \cdots \ge a_r$, and let $\ell \coloneqq a_1 + \cdots + a_{r}$. 
    Let $m\coloneqq m_{r, \ell}$ be the constant given by Lemma~\ref{Lemma:Unique-Maximizer}. 
    Then 
    \begin{align*}%\label{eq:balanced-OPT-necessary}
        \binom{\ell}{2} 
        > m \sum_{k \in [r]} \binom{a_k}{2}. 
    \end{align*}
\end{lemma}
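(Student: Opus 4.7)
The plan is to split into two cases according to the dichotomy in Fact~\ref{FACT:almost-balanced-sizes}.

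For $\ell \le 2r - 1$, Fact~\ref{FACT:almost-balanced-sizes}\ref{FACT:almost-balanced-sizes-1} forces $a_k \in \{1,2\}$ with exactly $\ell - r$ parts of size $2$, so $\sum_{k} \binom{a_k}{2} = \ell - r$ and the desired inequality rearranges to $m < \ell(\ell-1)/(2(\ell-r))$. This is precisely the first clause in Lemma~\ref{Lemma:m-bounds}\ref{Lemma:m-upper-bound}, so the case is immediate.

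For $\ell \ge 2r$, Fact~\ref{FACT:almost-balanced-sizes}\ref{FACT:almost-balanced-sizes-2} gives $a_r \ge 2$. I would first bound $\sum_{k} a_k^2$ via the elementary inequality $a_k^2 \le (a_1+a_r)a_k - a_1 a_r$ (valid since $a_k \in [a_r, a_1]$, so $(a_k-a_1)(a_k-a_r) \le 0$), summing to get $\sum_{k} a_k^2 \le \ell(a_1+a_r) - r a_1 a_r$. This reduces the claim to $m\Delta < \ell(\ell-1)$, where $\Delta := \ell(a_1+a_r-1) - r a_1 a_r$. Setting $\beta := a_1 - \ell/r \ge 0$ and $\gamma := \ell/r - a_r \ge 0$ (rescaled integers summing to $d := a_1-a_r$), a direct calculation gives the clean decomposition
\[
 \Delta = \frac{\ell(\ell-r)}{r} + r\beta\gamma.
\]
Invoking $m < r\ell^2/(\ell^2 - r^2)$ from Lemma~\ref{Lemma:m-bounds}\ref{Lemma:m-upper-bound} further reduces the problem to
\[
 r^2\ell\, \beta\gamma \le (\ell-r)\bigl((r-1)\ell - r\bigr).
\]

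The almost balanced condition $\binom{d}{2} < a_r$, after introducing the nonnegative integers $p := r a_1 - \ell$ and $q := \ell - r a_r$ (so $p+q = rd$ and $pq = r^2\beta\gamma$), rewrites as $(p+q)^2 - r(p-q) < 2r\ell$. Combined with $pq = \bigl((p+q)^2 - (p-q)^2\bigr)/4$, this yields an upper estimate for $pq$ as a quadratic in $D := p-q$ to be optimized. For $r = 2$ the argument is simplest: Lemma~\ref{Lemma:Unique-Maximizer} together with the explicit form of $f$ gives $m_{2,\ell} = 2$, and the inequality reduces to $d^2 < \ell$, an immediate rewriting of $d(d-1) < 2a_r = \ell - d$.

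The main obstacle I anticipate is closing out the optimization over $D$ in the borderline regime $\ell$ close to $2r$ (for $r \ge 3$), where the crude Popoviciu-type bound $\beta\gamma \le d^2/4$ combined with $d^2 < d + 2\ell/r$ is not quite tight enough. In that regime I would exploit the integrality constraints on $p$ and $q$ (which carry prescribed residues modulo $r$), which rule out the continuous optimum $p = q = rd/2$ and force $pq$ to sit strictly below $r^2 d^2/4$ by a margin sufficient to close the argument against $(\ell-r)((r-1)\ell - r)$.
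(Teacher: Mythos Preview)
Your treatment of $\ell \le 2r-1$ is correct and coincides with the paper's argument, and the $r=2$ subcase for $\ell \ge 2r$ is also fine.

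For $r \ge 3$ and $\ell \ge 2r$, your Popoviciu reduction to
\[
\ell\,pq \;\le\; (\ell-r)\bigl((r-1)\ell - r\bigr)
\]
is correct, and this target inequality does hold for every almost balanced $F$. The gap is in your proposed closure. You claim that the residues of $p,q$ modulo $r$ ``rule out the continuous optimum $p=q=rd/2$ and force $pq$ to sit strictly below $r^2d^2/4$''. This is false in general: for $F=K_{4,4,2,2}$ one has $r=4$, $\ell=12$, $d=2$, and $p=q=4=rd/2$ exactly, so $pq=r^2d^2/4$ is attained. (The target inequality still reads $192\le 256$ here, but not because integrality helped.) A second example where your crude chain visibly fails is $F=K_{4,2,2}$ ($r=3$, $\ell=8$, $d=2$): there $\ell\cdot r^2d^2/4=72>65=(\ell-r)((r-1)\ell-r)$, while the actual $pq=8$ gives $\ell pq=64\le 65$, with essentially no slack. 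So neither the crude bound $pq\le r^2d^2/4$ nor a generic integrality argument closes the gap; what is really needed is to feed the almost-balanced lower bound on $a_r$ (hence on $\ell$) directly into the inequality, and this requires a short but genuine calculation that you have not supplied.

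The paper sidesteps this endpoint analysis by bounding $\sum_{i<j}(a_i-a_j)^2$ rather than $\sum_k a_k^2$. From the almost-balanced hypothesis one gets $(a_i-a_j)^2 \le a_i+a_j-2$; combining this with a nesting trick that pairs $a_i$ with $a_{r+1-i}$ and telescopes inward, the paper obtains $\sum_{i<j}(a_i-a_j)^2 \le (r-1)\ell - r^2$ for $r\ge 4$. This upper bound is engineered so that, together with $m<r\ell^2/(\ell^2-r^2)$, the final comparison becomes a one-line identity with positive margin $r^2\ell/\bigl(2(\ell^2-r^2)\bigr)$. For $r\in\{2,3\}$ the paper simply observes that $\ell\ge 2r$ forces $m_{r,\ell}=r$ and applies $(a_i-a_j)^2\le a_i+a_j-2$ directly. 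Your Popoviciu bound on $\sum a_k^2$ is in fact sharper than the paper's in some instances (it is tight for $K_{4,4,2,2}$ and $K_{4,2,2}$, where the paper's is not), but the paper's choice has the virtue of cancelling exactly against the available bound on $m$, which is what makes the argument close without residual casework.
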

\begin{proof}
    Suppose that $a_r = 1$. 
    Then it follows from Fact~\ref{FACT:almost-balanced-sizes}~\ref{FACT:almost-balanced-sizes-1} that $a_1 = \cdots = a_{\ell-r} = 2$ and $a_{\ell-r+1} = \cdots = a_r = 1$.
    Combining this with $m < \frac{\ell(\ell-1)}{2(\ell-r)}$ (by Lemma~\ref{Lemma:m-bounds}~\ref{Lemma:m-upper-bound}), we obtain 
    \begin{align*}
        m \sum_{k \in [r]} \binom{a_k}{2} 
        = m(\ell-r) 
        < \frac{\ell(\ell-1)}{2(\ell-r)}  (\ell-r) 
        = \binom{\ell}{2}.
    \end{align*}
    Now suppose that $a_r \ge 2$ (consequently, $\ell \ge 2r$).
    Observe that 
    \begin{align}\label{equ:almost-balanced-transform}
        r \sum_{k\in [r]} \binom{a_{k}}{2}
        = \frac{1}{2} \Big(r \sum_{k\in [r]} a_{k}^{2} - r \sum_{k\in [r]} a_{k} \Big) 
        & = \frac{1}{2} \Big( (a_1 + \cdots + a_{r})^2 + \sum_{\{i,j\} \in \binom{[r]}{2}} (a_i - a_j)^2 - r\ell \Big) \notag \\
        & = \frac{1}{2} \Big( \ell^2 + \sum_{\{i,j\} \in \binom{[r]}{2}} (a_i - a_j)^2 - r\ell \Big). 
    \end{align}
    For every $\{i,j\} \subseteq [r]$, it follows from the almost balanced assumption that 
    \begin{align*}
        \binom{a_i-a_j}{2} \le \binom{a_1-a_r}{2} \le a_r-1 \le a_j-1, 
    \end{align*}
    which implies that 
    \begin{equation}\label{eq:diff-bound}
        (a_i - a_j)^2 \le a_i + a_j - 2.
    \end{equation}
    
    First, we consider the case $r \le 3$. 
    Since $\ell \ge 2r \ge \frac{\ln(r+1)}{\ln(1+1/r)}$ for $r \in \{2,3\}$, the integer $m_{r,\ell}$ given by Lemma~\ref{Lemma:Unique-Maximizer} satisfies that $m_{r,\ell} = r$.
    Combining this with~\eqref{equ:almost-balanced-transform} and~\eqref{eq:diff-bound}, we obtain 
    \begin{align*}
        m \sum_{k \in [r]}\binom{a_k}{2}
        = r \sum_{k \in [r]}\binom{a_k}{2}
        & = \frac{1}{2} \Big( \ell^2 + \sum_{\{i,j\} \in \binom{[r]}{2}} (a_i - a_j)^2 - r\ell \Big)  \le \frac{1}{2}\Bigl( \ell^2 + \sum_{\{i,j\} \in \binom{[r]}{2}} (a_i+a_j-2) - r\ell \Bigr) \\
        & = \frac{1}{2}\bigl( \ell^2 + (r-1)\ell - r(r-1) - r\ell \bigr) 
        = \frac{1}{2}\bigl( \ell^2 - \ell - r(r-1) \bigr) < \binom{\ell}{2}, 
    \end{align*}
    as desired. So it remains to prove the case when $r \ge 4$. 
    
    \begin{claim}\label{Claim:(a_i-a_j)2-upper-bound}
        We have $\sum_{\{i,j\} \in \binom{[r]}{2}} (a_i-a_j)^2 \le (r-1)\ell - r^2$.
    \end{claim}
    \begin{proof}
        Observe that 
        \begin{align*}
            \sum_{\{i,j\} \in \binom{[r]}{2}} (a_i -a_j)^2
            & = \sum_{\{i,j\} \in \binom{[2,r-1]}{2}} (a_i -a_j)^2 + (a_1 - a_{r})^2 + \sum_{i \in [2,r-1]} \big( (a_1 - a_{i})^2 + (a_i - a_{r})^2 \big) \\
            & \le \sum_{\{i,j\} \in \binom{[2,r-1]}{2}} (a_i -a_j)^2 + (a_1 - a_{r})^2 + \sum_{i \in [2,r-1]} (a_1 - a_{r})^2  \\
            &=  \sum_{\{i,j\} \in \binom{[2,r-1]}{2}} (a_i -a_j)^2 + (r-1) (a_1 - a_{r})^2, 
        \end{align*}
        where the inequality follows from the fact that, for all $x \ge y \ge z \ge 0$, 
        \begin{align*}
            (x-y)^2 + (y-z)^2 
            = \big((x-y) + (x-z)\big)^2 -2(x-y)(y-z)
            \le (x-z)^2.
        \end{align*}
        Repeating this argument for $\sum_{{i,j} \in \binom{[2,r-1]}{2}} (a_i -a_j)^2$, and continuing similarly, then applying~\eqref{eq:diff-bound}, we obtain 
        \begin{align*}
            \sum_{\{i,j\} \in \binom{[r]}{2}} (a_i -a_j)^2 
            & \le (r-1) (a_1 - a_{r})^2 + (r-3) (a_2 - a_{r-1})^2 + \cdots + \big( r - 2 \lfloor \tfrac{r}{2} \rfloor + 1 \big) \big( a_{\lfloor r/2 \rfloor} - a_{r+1-\lfloor r/2 \rfloor} \big)^2 \\
            & = \sum_{i = 1}^{\lfloor r/2 \rfloor} (r-2i+1) (a_i - a_{r+1-i})^2
            \le \sum_{i = 1}^{\lfloor r/2 \rfloor} (r-2i+1) (a_i + a_{r+1-i} - 2).
        \end{align*}
        So it suffices to show that 
        \begin{align*}
            \sum_{i = 1}^{\lfloor r/2 \rfloor} (r-2i+1) (a_i + a_{r+1-i} - 2)
            \le (r-1)\ell - r^2.
        \end{align*}
        Suppose that $r = 2k$ for some $k \ge 2$. 
        Then 
        \begin{align*}
            \sum_{i = 1}^{\lfloor r/2 \rfloor} (r-2i+1) (a_i + a_{r+1-i} - 2)
            & = \sum_{i \in [k]} (r-1) (a_i + a_{r+1-i} - 2) - \sum_{i \in [k]} (2i-2) (a_i + a_{r+1-i} - 2) \\
            % & = (r-1) (\ell - r) - \sum_{i \in [k]} (2i-2) (a_i + a_{r+1-i} - 2) \\
            & \le (r-1) (\ell - r) - \sum_{i \in [k]} (2i-2)(2+2-2) \\
            & = (r-1) (\ell - r) - \frac{r(r-2)}{2}  
            = (r-1)\ell - r^2 - \frac{r(r-4)}{2}
            \le (r-1)\ell - r^2, 
        \end{align*}
        as desired. 
        
        Now suppose that $r = 2k-1$ for some $k \ge 3$. 
        Then 
        \begin{align*}
            & \sum_{i = 1}^{\lfloor r/2 \rfloor} (r-2i+1) (a_i + a_{r+1-i} - 2) \\
            & \quad =  \sum_{i \in [k-1]} (r-1) (a_i + a_{r+1-i} - 2) - \sum_{i \in [k-1]} (2i-2) (a_i + a_{r+1-i} - 2) \\
            & \quad =  \sum_{i = 1}^{r} (r-1) (a_i -1) - (r-1)(a_k-1) - \sum_{i \in [k-1]} (2i-2) (a_i + a_{r+1-i} - 2) \\
            & \quad \le  (r-1) (\ell - r) - (r-1)(2-1) - \sum_{i \in [k-1]} (2i-2) (2+2 - 2) \\
            & \quad =  (r-1) (\ell - r) - (r-1)- \frac{1}{2} \left(r^2-4 r+3\right)  
             = (r-1)\ell - r^2 - \frac{r(r-4)+1}{2}
            \le (r-1)\ell - r^2, 
        \end{align*}
        also as desired. 
        This completes the proof of Claim~\ref{Claim:(a_i-a_j)2-upper-bound}.
    \end{proof}%CLAIM

    It follows from~\eqref{equ:almost-balanced-transform}, Claim~\ref{Claim:(a_i-a_j)2-upper-bound}, and the inequality $m < \frac{r \ell^2}{\ell^2 - r^2}$ (by Lemma~\ref{Lemma:m-bounds}~\ref{Lemma:m-upper-bound}) that 
    \begin{align*}%\label{eq:r-binom-ak-2-upper}
        r\sum_{k \in [r]}\tbinom{a_k}{2}
         = \frac{m}{r} \cdot r\sum_{k \in [r]}\tbinom{a_k}{2} 
        & = \frac{m}{r} \cdot \frac{1}{2} \Big(\ell^2 + \sum_{\{i,j\} \in \binom{[r]}{2}} (a_i -a_j)^2 - r \ell \Big) \\
        & \le \frac{\ell^2}{\ell^2 - r^2} \Big(\ell^2 + (r-1)\ell - r^2 - r \ell \Big) 
        = \binom{\ell}{2} - \frac{r^2 \ell}{2(\ell^2 - r^2)}
        < \binom{\ell}{2}. 
        % & = \frac{1}{2}\Bigl( \ell^2 + \sum_{1\le i<j\le r}(a_i-a_j)^2 - r\ell \Bigr) 
        % \le \frac{1}{2}\bigl( \ell^2 + (r-1)\ell - r^2 - r\ell \bigr) 
        % = \frac{1}{2}\bigl( \ell^2 - \ell - r^2 \bigr).
    \end{align*}
    This completes the proof of Lemma~\ref{LEMMA:prepare-of-exact}.
\end{proof}

We are now ready to prove Theorem~\ref{THM:almost-balanced-exact}.
\begin{proof}[\bf Proof of Theorem~\ref{THM:almost-balanced-exact}]
    Let $F = K_{a_1, \ldots, a_{r}}$ be an almost balanced complete $r$-partite graph on $\ell$ vertices, that is, $\ell = a_1 + \cdots + a_r$.
    Let $m = m_{r,\ell}$ be the integer given by Lemma~\ref{Lemma:Unique-Maximizer}.
    Let $n$ be a sufficiently large integer, and let $N \coloneqq n/m$. 
    Let $G$ be an $n$-vertex graph satisfying $I(F, G) = I(F, n)$. 
    
    It follows from Theorems~\ref{THM:almost-balanced-asymptotic} and~\ref{THM:almost-balanced-perfect-stability} that $G$ is complete $m$-partite with each part of size $(1+o(1))N$. 
    Write the part sizes of $G$ as $n_1 \geq n_2 \geq \dots \geq n_m$, noting that $\sum_{i=1}^m n_i = n$ and $|n_i - N| = o(n)$ for every $i \in [m]$. 
    Let $y \coloneqq n_1$ and $x \coloneqq n_m$. 
    Suppose to the contrary that $G$ is not a Tur\'{a}n graph, that is, $y \ge x+2$.  

    For convenience, for every graph $H$, let $\hat{I}(F,H) \coloneqq \mathrm{sym}(a_1,\dots,a_r) \cdot I(F,H)$. 
    
    \begin{claim}\label{CLAIM:exact-n-large-shift}
        The complete $m$-partite graph $G_{\ast} \cong K_{n_1-1, n_2, \dots, n_m+1}$ satisfies 
        \begin{align*}
            \hat{I}(F,G_{\ast}) - \hat{I}(F,G) 
            > 0. 
        \end{align*}
    \end{claim}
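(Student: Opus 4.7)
The plan is to compute $\hat{I}(F,G_\ast) - \hat{I}(F,G)$ directly via the formula~\eqref{equ:IFG-formula} in Fact~\ref{FACT:Directly-Computation}, which applies here because $G$ has no singleton parts, and to show that this difference is strictly positive. Writing $y = n_1$ and $x = n_m$ with $y \ge x+2$, only the part sizes at indices $1$ and $m$ change when passing to $G_\ast$. Hence, in the sum over ordered $r$-tuples $(i_1,\dots,i_r) \in ([m])_r$, tuples whose entries all lie in the ``middle'' set $\{2,\dots,m-1\}$ cancel, and the remaining contributions split into two groups: tuples using exactly one of the indices $\{1,m\}$ (group~I) and tuples using both of them (group~II).

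For each fixed role $j \in [r]$, the group~I contribution at position $j$ picks up the shift factor
$\binom{y-1}{a_j}+\binom{x+1}{a_j}-\binom{y}{a_j}-\binom{x}{a_j}$,
whose asymptotics is given by~\eqref{equ:LEMMA:exact-Mean-Value-a}. For each fixed unordered pair of roles $\{j,k\}$, the group~II contribution picks up exactly the left-hand side of~\eqref{equ:LEMMA:exact-Mean-Value-b}, whose asymptotics is given by that same lemma. In both groups, the remaining product over middle indices evaluates to a $(1+o(1))$-multiple of the obvious leading term, since every part of $G$ has size $(1+o(1))N$ where $N = n/m$.

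Collecting these estimates and using the identity $(m-2)_{r-1}=(m-r)(m-2)_{r-2}$, the difference takes the form
\[
    \hat{I}(F,G_\ast) - \hat{I}(F,G) = (y-x-1)(1+o(1))\frac{(m-2)_{r-2}}{a_1!\cdots a_r!} N^{\ell-2} \cdot \Lambda,
\]
where $\Lambda \coloneqq (r-1)\ell - \sum_{\{j,k\}\in\binom{[r]}{2}} (a_j-a_k)^2 - 2(m-r)\sum_{j\in[r]}\binom{a_j}{2}$. Applying the identity~\eqref{equ:almost-balanced-transform} to eliminate $\sum_{\{j,k\}}(a_j-a_k)^2$, the quantity $\Lambda$ simplifies to $2\bigl(\binom{\ell}{2} - m\sum_{j}\binom{a_j}{2}\bigr)$, which is strictly positive by Lemma~\ref{LEMMA:prepare-of-exact}. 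Since $y-x-1 \ge 1$ and $n$ is sufficiently large, the whole expression is positive.

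The main obstacle is the careful bookkeeping of the four cases (tuples using neither, only $1$, only $m$, or both of the special indices) and the algebraic consolidation that recasts $\Lambda$ in the form handled by Lemma~\ref{LEMMA:prepare-of-exact}; once this is in place, the asymptotic analysis is routine given Lemma~\ref{LEMMA:exact-Mean-Value}, and the sign is settled by the almost-balanced inequality of Lemma~\ref{LEMMA:prepare-of-exact}.
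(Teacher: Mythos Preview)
Your proposal is correct and follows essentially the same route as the paper: split the difference $\hat I(F,G_\ast)-\hat I(F,G)$ according to whether an ordered $r$-tuple uses one or both of the special indices $\{1,m\}$, apply the two estimates of Lemma~\ref{LEMMA:exact-Mean-Value}, collect into the scalar $\Lambda$ (the paper calls it $\Psi$), simplify it to $2\bigl(\binom{\ell}{2}-m\sum_j\binom{a_j}{2}\bigr)$, and invoke Lemma~\ref{LEMMA:prepare-of-exact}. The only cosmetic difference is that the paper carries out the simplification of $\Psi$ directly rather than citing~\eqref{equ:almost-balanced-transform}, and it records the error additively as $+\,o(N^{\ell-2})$ rather than multiplicatively as $(1+o(1))$; since $y-x-1\ge 1$ and the leading coefficient is a positive integer multiple of a fixed positive constant, both formulations yield the desired strict positivity for large $n$.
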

    \begin{proof}   
        Recall that $m=2$ when $r=2$ by Lemma~\ref{Lemma:Unique-Maximizer}. 
        For each $i \in [r]$, let 
        \begin{align*}
            f_{i}(x,y)
            & \coloneqq \tbinom{x+1}{a_i} + \tbinom{y-1}{a_i} - \tbinom{x}{a_i} - \tbinom{y}{a_i},  \\[.4em]
            A_i 
            & \coloneqq 
            \begin{cases}
                0, & \quad\text{if}\quad r=2, \\[.3em]
                \sum_{(s_k)_{k \in [r]\setminus \{i\}}} \prod_{k \in [r]\setminus \{i\}} \tbinom{n_{s_k}}{a_k}, & \quad\text{if}\quad r \ge 3, 
            \end{cases}
            \quad = \left( \tfrac{(m-2)_{r-1} \cdot a_i!}{a_1! \cdots a_{r}!} + o(1)\right) N^{\ell-a_i}.
        \end{align*}
        where the tuple $(s_k)_{k \in [r]\setminus \{i\}}$ ranges over $\big([2,m-1]\big)_{r-1}$ in the summation.  
        
        For each $\{i, j\} \subseteq [r]$, let 
        \begin{align*}
            g_{i,j}(x,y)
            & \coloneqq \tbinom{x+1}{a_i}\tbinom{y-1}{a_j} + \tbinom{x+1}{a_j}\tbinom{y-1}{a_i} - \tbinom{x}{a_i}\tbinom{y}{a_j} - \tbinom{x}{a_j}\tbinom{y}{a_i},  \\[.4em]
            B_{i,j}
            & \coloneqq 
            \begin{cases}
                1, & \quad\text{if}\quad r=2, \\[.3em] 
                \sum_{(s_k)_{k \in [r]\setminus \{i,j\}}} \prod_{k \in [r]\setminus \{i,j\}} \tbinom{n_{s_k}}{a_k}, & \quad\text{if}\quad r \ge 3, 
            \end{cases}
            \quad = \left( \tfrac{(m-2)_{r-2}\cdot a_i! a_j!}{a_1! \cdots a_{r}!} + o(1) \right) N^{\ell-a_i-a_j}, 
        \end{align*}
        where the tuple $(s_k)_{k \in [r]\setminus \{i,j\}}$ ranges over $\big([2,m-1]\big)_{r-2}$ in the summation.
        % (the indices $1$ and $m$ correspond to the parts of sizes $y$ and $x$, which will be treated separately).

        Since $\sc(G) = 0$, applying~\eqref{equ:IFG-formula} and simplifying, we obtain
        \begin{align}\label{equ:exact-shift-diff}
            \hat{I}(F,G_{\ast}) - \hat{I}(F,G) 
            & = \sum_{i \in [r]} A_i \cdot f_{i}(x,y) + \sum_{\{i,j\} \in \binom{[r]}{2}} B_{i,j} \cdot g_{i,j}(x,y). 
            % &= \sum_{i \in [r]} \left( \tbinom{x+1}{a_i} + \tbinom{y-1}{a_i} - \tbinom{x}{a_i} - \tbinom{y}{a_i} \right) A_i \notag \\
            % &\quad + \sum_{\{i,j\} \in \binom{[r]}{2}} \left( \tbinom{x+1}{a_i}\tbinom{y-1}{a_j} + \tbinom{x+1}{a_j}\tbinom{y-1}{a_i} 
            %        - \tbinom{x}{a_i}\tbinom{y}{a_j} - \tbinom{x}{a_j}\tbinom{y}{a_i} \right) B_{i,j}.
        \end{align}
        It follows from~\eqref{equ:LEMMA:exact-Mean-Value-a} that 
        \begin{align}\label{equ:exact-shift-diff-A}
            A_i \cdot f_{i}(x,y)
            & = - \frac{a_i (a_i-1)}{a_i!} (y-x-1+o(1)) N^{a_i - 2} \cdot \left( \frac{(m-2)_{r-1} \cdot a_i!}{a_1! \cdots a_{r}!} + o(1)\right) N^{\ell-a_i} \notag \\
            & = a_i (a_i-1) (y-x-1) \frac{(m-2)_{r-1}}{a_1! \cdots a_{r}!} N^{\ell-2} + o(N^{\ell-2}). 
        \end{align}
        It follows from~\eqref{equ:LEMMA:exact-Mean-Value-b} that 
        \begin{align}\label{equ:exact-shift-diff-B}
            B_{i,j} \cdot g_{i,j}(x,y) 
            & = \frac{  a_i + a_j - (a_i - a_j)^2 }{a_i!a_j!} \big( y-x-1 +o(1) \big) N^{a_i+a_j-2} \cdot \left( \frac{(m-2)_{r-2}\cdot a_i! a_j!}{a_1! \cdots a_{r}!} + o(1) \right) N^{\ell-a_i-a_j} \notag \\
            & = \left(a_i + a_j - (a_i - a_j)^2\right) (y-x-1) \frac{(m-2)_{r-2}}{a_1! \cdots a_{r}!} N^{\ell-2} + o(N^{\ell-2}). 
        \end{align}
        Let 
        \begin{align*}
            \Psi
            \coloneqq - \sum_{i\in [r]} (m-r)a_i(a_i-1) + \sum_{\{i,j\}\in \binom{[r]}{2}} \left(a_i + a_j - (a_i - a_j)^2\right). 
        \end{align*}
        Then 
        \begin{align*}
            \Psi
            & = - \sum_{i\in [r]} (m-r)(a_i^2 - a_i) + \sum_{\{i,j\}\in \binom{[r]}{2}} \left(a_i + a_j - a_i^2 - a_j^2 + 2a_i a_j\right) \\
            & = \sum_{i\in [r]} \left( -(m-r)(a_i^2 - a_i) - (r-1) (a_i^2 - a_i) \right) + \sum_{i\in [r]} a_i \sum_{j \in [r]\setminus \{i\}} a_j \\
            & = - \sum_{i\in [r]} (m-1)(a_i^2 - a_i) + \sum_{i\in [r]} a_i (\ell-a_i) \\
            & = - \sum_{i\in [r]} m (a_i^2 - a_i) + \sum_{i\in [r]} (\ell-1) a_i
            = 2\Big( \tbinom{\ell}{2} - m \sum_{i\in [r]} \tbinom{a_i}{2} \Big). 
            % = \ell(\ell-1) - m \sum_{i\in [r]} a_i(a_i-1). 
        \end{align*}
        Combining this with~\eqref{equ:exact-shift-diff},~\eqref{equ:exact-shift-diff-A}, and~\eqref{equ:exact-shift-diff-B}, we obtain 
        \begin{align*}
            \hat{I}(F,G_{\ast}) - \hat{I}(F,G)
            & = \Psi \cdot (y-x-1) \frac{(m-2)_{r-2}}{a_1! \cdots a_{r}!} N^{\ell-2} + o(N^{\ell-2}) \\
            & = 2\Big( \tbinom{\ell}{2} - m \sum_{i\in [r]} \tbinom{a_i}{2} \Big) \cdot (y-x-1) \frac{(m-2)_{r-2}}{a_1! \cdots a_{r}!} N^{\ell-2} + o(N^{\ell-2}). 
        \end{align*}
        By Lemma~\ref{LEMMA:prepare-of-exact}, the factor $\binom{\ell}{2} - m\sum_{i} \binom{a_i}{2}$ is positive and hence at least one.
        It follows that $\hat{I}(F,G_{\ast}) - \hat{I}(F,G) > 0$, which establishes Claim~\ref{CLAIM:exact-n-large-shift}.
    \end{proof}%CLAIM
    
    It follows from Claim~\ref{CLAIM:exact-n-large-shift} that $G$ is not extremal, contradicting the assumption that $I(F,G) = I(F,n)$. 
    This completes the proof of Theorem~\ref{THM:almost-balanced-exact}.
\end{proof}

For the balanced case $K_r(t)=T_r(rt)$, the requirement that $n$ be sufficiently large can be dropped, though the extremal graph may no longer be unique.

\begin{theorem}\label{THM:Kr(t)-exact}
    Let $r, t \geq 2$ be integers. The following holds for every $n \ge rt$. 
    Suppose that $G$ is an $n$-vertex complete multipartite graph satisfying $I(K_r(t),G) = I(K_r(t), n)$. Then $G \cong T_k(n)$ for some integer $k \ge r$. 
    In particular, for every integer $n \ge rt$, there exists an integer $k$ such that $I(K_r(t),T_k(n)) = I(K_r(t), n)$. 
\end{theorem}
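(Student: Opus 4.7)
The plan is to reduce to complete multipartite $G$ via Lemma~\ref{Lemma:Sidorenko-Extremal-Partite} and exploit the clean combinatorial identity
\[
I(K_r(t), G) \;=\; e_r\bigl(\tbinom{n_1}{t}, \ldots, \tbinom{n_k}{t}\bigr),
\]
valid because any induced copy of $K_r(t)=K_{t,\ldots,t}$ in a complete multipartite graph $G$ must place exactly $t$ vertices in each of $r$ distinct parts. Given an extremal $G$ with part sizes $n_1\ge\cdots\ge n_k$, I would then establish in two reduction steps that all $n_i\ge t$ (pruning) and $n_1-n_k\le 1$ (balancing); together these yield $G\cong T_k(n)$ for some $k\ge r$, and the ``in particular'' clause follows.

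For the \emph{pruning step}, suppose $V_p$ has size $n_p<t$, and let $V_q$ be a part of size $n_q\ge t$ (one exists, because $I(K_r(t),G)>0$ forces at least $r$ parts of size $\ge t$). Moving one vertex from $V_p$ into $V_q$ keeps $\binom{n_p}{t}=\binom{n_p-1}{t}=0$ and changes $\binom{n_q}{t}$ to $\binom{n_q+1}{t}$; using the identity above, the net change in $I(K_r(t),G)$ equals $\binom{n_q}{t-1}\cdot e_{r-1}(c_i:i\ne p,q)>0$, since the other $r-1$ large parts still witness $e_{r-1}>0$. This contradicts extremality, so every part of $G$ has size $\ge t$.

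For the \emph{balancing step}, suppose $n_1\ge n_k+2$ after pruning. I would compare two modifications: the shift $G_A$ moving one vertex from $V_1$ to $V_k$, and the merge $G_B$ incorporating all of $V_k$ into $V_1$ (reducing to $k-1$ parts). Setting $c_i:=\binom{n_i}{t}$, $e^{\ast}_j:=e_j(c_2,\ldots,c_{k-1})$, and writing $I(K_r(t),G)=c_1c_k\,e^{\ast}_{r-2}+(c_1+c_k)e^{\ast}_{r-1}+e^{\ast}_r$, direct expansion yields
\begin{align*}
I(G_A)-I(G)&=\Delta_1\,e^{\ast}_{r-2}+\Delta_2\,e^{\ast}_{r-1},\\
I(G_B)-I(G)&=\alpha\,e^{\ast}_{r-1}-c_1c_k\,e^{\ast}_{r-2},
\end{align*}
where $\Delta_1=\binom{n_1-1}{t}\binom{n_k+1}{t}-\binom{n_1}{t}\binom{n_k}{t}>0$ by log-concavity of $\binom{n}{t}$, $\Delta_2<0$ by convexity, and $\alpha=\binom{n_1+n_k}{t}-\binom{n_1}{t}-\binom{n_k}{t}>0$ by superadditivity. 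When $k=r$ the factor $e^{\ast}_{r-1}$ vanishes and $I(G_A)>I(G)$ directly. Otherwise, extremality forces both differences to be $\le 0$; eliminating the ratio $e^{\ast}_{r-1}/e^{\ast}_{r-2}$ produces the numerical inequality $\alpha\Delta_1+c_1c_k\Delta_2\le 0$, equivalently $AD(E-B-C)\le BC(E-A-D)$ with $A,B,C,D,E$ denoting $\binom{n_1-1}{t},\binom{n_1}{t},\binom{n_k}{t},\binom{n_k+1}{t},\binom{n_1+n_k}{t}$.

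The crux, and the main obstacle, is showing that this inequality fails strictly whenever $n_1\ge n_k+2\ge t+2$. The cleanest route I see is via Vandermonde's identity:
\[
E-B-C=\sum_{i=1}^{t-1}\tbinom{n_1}{i}\tbinom{n_k}{t-i},\qquad E-A-D=\sum_{i=1}^{t-1}\tbinom{n_1-1}{i}\tbinom{n_k+1}{t-i},
\]
so that $\alpha\Delta_1+c_1c_k\Delta_2$ becomes a signed sum of terms indexed by $i\in\{1,\ldots,t-1\}$. The term-wise ratio reduces to whether $i(n_1-t)>(t-i)(n_k-t+1)$, which is easy for $i$ near $t-1$ but can degenerate — or even fail — for small $i$ with $n_k$ close to $t$. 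The delicate part is a summation-level estimate that compensates small-$i$ deficits with large-$i$ surpluses; this can be verified in closed form on the boundary case $(n_1,n_k)=(t+2,t)$, where the expression collapses to $\tfrac{t(t+1)}{2}\bigl[\binom{2t+2}{t}-(t+1)^2\bigr]>0$, and then propagated to general $n_1\ge n_k+2\ge t+2$ by monotonicity in $n_1$ and $n_k$. Once the inequality is established, the extremality assumption collapses, and $G$ must be a Tur\'an graph.
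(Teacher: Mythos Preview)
Your overall architecture matches the paper's proof almost exactly: the same pair of modifications (shift $G_A$ and merge $G_B$), the same elimination of the ratio $e^{\ast}_{r-1}/e^{\ast}_{r-2}$ from the two extremality inequalities, and the same reduction to a single numerical inequality in $x=n_k$, $y=n_1$. Your pruning step is a welcome addition; the paper's one-line ``since otherwise $I(F,G)=0$'' is actually not a correct justification for $a_k\ge t$, and your argument fixes this.

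The genuine gap is at the very end. After reducing to $AD(E-B-C)>BC(E-A-D)$, you propose a Vandermonde expansion, a boundary check at $(n_1,n_k)=(t+2,t)$, and then ``propagation by monotonicity in $n_1$ and $n_k$''. That last step is not justified and does not appear to be true in any straightforward sense: the quantity $\alpha\Delta_1+c_1c_k\Delta_2$ is a high-degree polynomial in $n_1,n_k$, and there is no evident reason it should be monotone in either variable on the region $n_1\ge n_k+2\ge t+2$. Your own term-wise analysis already shows that individual summands can change sign, so a global monotonicity claim would need a separate argument that you do not supply.

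The paper closes this gap with a much cleaner manoeuvre: after the same elimination, it \emph{factors} the resulting expression as
\[
\beta_2\alpha_1-\beta_1\alpha_2=\tfrac{1}{t}\tbinom{x}{t-1}\tbinom{y-1}{t-1}\Bigl(y\tbinom{y-1}{t}-(x+1)\tbinom{x}{t}-(y-x-1)\tbinom{x+y}{t}\Bigr),
\]
so that positivity of $\alpha\Delta_1+c_1c_k\Delta_2$ is equivalent to
\[
\tfrac{x+1}{y}\tbinom{x}{t}+\tfrac{y-x-1}{y}\tbinom{x+y}{t}>\tbinom{y-1}{t}.
\]
Since the weights $\tfrac{x+1}{y}$ and $\tfrac{y-x-1}{y}$ are positive (here is where $y\ge x+2$ is used), sum to $1$, and have weighted mean of $x$ and $x+y$ equal to $y-1$, this is immediate from strict convexity of $z\mapsto\binom{z}{t}$ for $t\ge 2$. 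Replacing your Vandermonde/monotonicity paragraph with this factorisation and Jensen step would make your proof complete and essentially identical to the paper's.
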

\begin{proof}
    Fix integers $r \ge 2$ and $t \ge 2$, and let $F = K_r(t)$.
    Fix $n \ge rt$. 
    By Lemma~\ref{Lemma:Sidorenko-Extremal-Partite}, there exists a complete multipartite graph $G$ such that $I(F,G) = I(F,n)$.
    Write $G = K_{a_1,\ldots,a_k}$ for some integer $k$, and assume that $a_1 \ge \cdots \ge a_k$.
    Clearly, we must have $k \ge r$ and $a_k \ge t$, since otherwise $I(F,G) = 0$.

    Suppose to the contrary that $a_1 \ge a_k + 2$. 
    Let $G'$ be the complete $k$-partite graph with part sizes $a_1-1, a_2, \cdots, a_{k-1}, a_k+1$, and let $G''$ be the complete $(k-1)$-partite graph with part sizes $a_1 + a_k, a_2, \cdots, a_{k-1}$.

    \begin{claim}\label{Claim:Adjustment-Method-Discrete}
        We have $I(F,G) < \max\{ I(F,G'),~I(F,G'') \}$.
    \end{claim}
    \begin{proof}
        Let $x \coloneqq a_k$ and $y \coloneqq a_1$, noting that $y-1 > x \geq t$. 
        Define
        \begin{align*}
            A 
            & \coloneqq 
            \begin{cases}
                0, & \quad\text{if}\quad r=2, \\[.4em]
                r \cdot \sum_{(i_1,\dots,i_{r-1}) \in ([2,k-1])_{r-1}} 
                \tbinom{a_{i_1}}{t} \cdots \tbinom{a_{i_{r-1}}}{t}, & \quad\text{if}\quad r \ge 3, 
            \end{cases} \\[.4em]
            B 
            & \coloneqq 
            \begin{cases}
                2, & \quad\text{if}\quad r=2, \\[.4em]
                r(r-1) \cdot \sum_{(i_1,\dots,i_{r-2}) \in ([2,k-1])_{r-2}} 
                \tbinom{a_{i_1}}{t} \cdots \tbinom{a_{i_{r-2}}}{t}, & \quad\text{if}\quad r \ge 3. 
            \end{cases}
        \end{align*}
        %
        % Here, $A \coloneqq 0$ and $B \coloneqq 2$ if $r=2$. 
        % Since $a_i \ge t$ for all $i$, every factor $\binom{a_{i_j}}{t}$ appearing in $B$ is at least $1$.
        % For $r \ge 3$, the sum is non‑empty because $k-1 \ge r-2 \ge 1$.
        % Therefore $B > 0$ in all cases.
        Since $\min_{i \in [k]}a_i \ge t$ and $k \ge r$, it follows directly from the definition that $B > 0$. 

        Suppose for a contradiction that both $I(F,G) \ge I(F,G')$ and $I(F,G) \ge I(F,G'')$ hold.
        Applying~\eqref{equ:IFG-formula} and then simplifying, we obtain 
        \begin{align}
            0 &\le r! \bigl( I(F,G) - I(F,G') \bigr) 
               = \alpha_1 A - \beta_1 B,
                \label{eq:exact-adjust-1} \\
            0 &\le r! \bigl( I(F,G) - I(F,G'') \bigr) 
               = \beta_2 B 
                 - \alpha_2 A,
                \label{eq:exact-adjust-2}
        \end{align}
        where 
        \begin{align*}
            \alpha_1
            \coloneqq \tbinom{x}{t} + \tbinom{y}{t} - \tbinom{x+1}{t} - \tbinom{y-1}{t}, \quad
            \alpha_2
            \coloneqq \tbinom{x+y}{t} - \tbinom{x}{t} - \tbinom{y}{t}, \quad
            \beta_1
            \coloneqq \tbinom{x+1}{t} \tbinom{y-1}{t} - \tbinom{x}{t} \tbinom{y}{t}, \quad
            \beta_2
            \coloneqq \tbinom{x}{t} \tbinom{y}{t}.
        \end{align*}
        
        Since $y > x \geq t \geq 2$, we have
        \begin{align*}
            \alpha_1 = \tbinom{y-1}{t-1} - \tbinom{x}{t-1} > 0, \quad
            \alpha_2 = \sum_{i \in [t-1]} \tbinom{x}{i} \tbinom{y}{t-i} > 0,\quad
            \beta_1 = \tfrac{y-x-1}{t} \tbinom{x}{t-1} \tbinom{y-1}{t-1} > 0, \quad
            \beta_2 > 0.
        \end{align*}
        Combining this with~\eqref{eq:exact-adjust-1},~\eqref{eq:exact-adjust-2}, and $B>0$, we obtain 
        \begin{align*}
            \frac{\beta_1}{\alpha_1} \leq \frac{A}{B} \leq \frac{\beta_2}{\alpha_2},
        \end{align*}
        which after rearranging yields 
        \begin{align*}
            0  \leq \beta_2 \alpha_1 - \beta_1 \alpha_2
            & =  \tbinom{x}{t} \tbinom{y}{t} \Bigl( \tbinom{x}{t} + \tbinom{y}{t} - \tbinom{x+1}{t} - \tbinom{y-1}{t} \Bigr)
            - \Bigl( \tbinom{x+1}{t} \tbinom{y-1}{t} - \tbinom{x}{t} \tbinom{y}{t} \Bigr) 
            \Bigl( \tbinom{x+y}{t} - \tbinom{x}{t} - \tbinom{y}{t} \Bigr) \\[.3em]
            & = \tbinom{y}{t}\tbinom{y-1}{t}\Bigl( \tbinom{x+1}{t}-\tbinom{x}{t} \Bigr)
            -\tbinom{x+1}{t}\tbinom{x}{t}\Bigl(\tbinom{y}{t}- \tbinom{y-1}{t} \Bigr) 
            -  \tbinom{x+y}{t}\Bigl( \tbinom{x+1}{t} \tbinom{y-1}{t} - \tbinom{x}{t} \tbinom{y}{t} \Bigr) \\[.3em]
            & = \tbinom{y}{t}\tbinom{y-1}{t}\tbinom{x}{t-1}-\tbinom{x+1}{t}\tbinom{x}{t} \tbinom{y-1}{t-1}
            -\tfrac{y-x-1}{t} \tbinom{x}{t-1} \tbinom{y-1}{t-1}  \tbinom{x+y}{t} \\[.3em]
            & = \tfrac{1}{t} \tbinom{x}{t-1} \tbinom{y-1}{t-1} 
            \Bigl( y\tbinom{y-1}{t} - (x+1)\tbinom{x}{t} - (y-x-1)\tbinom{x+y}{t} \Bigr).
        \end{align*}
        It follows that 
        \begin{align}\label{equ:Balanced-exact-a}
            y\tbinom{y-1}{t} - (x+1)\tbinom{x}{t} - (y-x-1)\tbinom{x+y}{t}
            \ge 0.
        \end{align}
        Let $\phi \colon [t, \infty) \to \mathbb{R}$ be the function defined by $\phi(z) = \binom{z}{t}$ for every $z\in [t, \infty)$. 
        Since $\phi$ is strictly convex on $[t,\infty)$ for $t \ge 2$, it follows from Jensen's inequality that 
        \begin{align*}
            \tfrac{x+1}{y} \tbinom{x}{t} + \tfrac{y-x-1}{y} \tbinom{x+y}{t} 
            > \tbinom{\frac{x+1}{y} \cdot x + \frac{y-x-1}{y} \cdot (x+y)}{t}
            = \tbinom{y-1}{t}. 
        \end{align*}
        However, this is a contradiction to~\eqref{equ:Balanced-exact-a}. 
        This completes the proof of Claim~\ref{Claim:Adjustment-Method-Discrete}.
    \end{proof}

    It follows from Claim~\ref{Claim:Adjustment-Method-Discrete} that $G$ is not extremal, contradicting the assumption that $I(F,G) = I(F,n)$. 
    This completes the proof of Theorem~\ref{THM:Kr(t)-exact}.
\end{proof}
%%%%%%%%%%%%%%%%%%%%%%%%%%%%%%%%%%%%%%%%%%%
\section{Concluding remarks}\label{SEC:remarks}
%
% Theorem~\ref{THM:almost-balanced-exact} establishes that for any almost balanced complete $r$-partite graph $F$ on $\ell$ vertices, there exists an integer $m = m_{r,\ell}$ such that $\OPT(F) = \{\bm{m}\}$ and, for all sufficiently large $n$, the Turán graph $T_m(n)$ is the unique extremal graph for $I(F,n)$. 
%
Let $F$ be a complete $r$-graph on $\ell \ge r+1$ vertices. Observe that if a maximizer $\bm{x}=(x_1,x_2,\ldots)\in \OPT(F)$ satisfies $x_0=0$ and $x_i=x_j$ for all ${i,j}\subseteq \supp(\bm{x})$, then necessarily $\bm{x}= \bm{m}_{r,\ell}$, where $m_{r,\ell}$ is the integer given by Lemma~\ref{Lemma:Unique-Maximizer}.

A natural next step beyond Theorem~\ref{THM:almost-balanced-exact} is to understand to what extent the “almost balanced” hypothesis can be relaxed.

\begin{problem}\label{Prob:asymptotic-equal}
    For all integers $\ell > r \ge 3$, characterize the complete $r$-partite graphs $F$ on $\ell$ vertices for which $\OPT(F)=\{\bm{m}_{r,\ell}\}$.
\end{problem}

We remark that a necessary condition for a complete $r$-partite graph $F = K_{a_1, \ldots, a_{r}}$ on $\ell$ vertices satisfying $\OPT(F)=\{\bm{m}_{r,\ell}\}$ is given by the following inequality, which is slightly weaker than that in Lemma~\ref{LEMMA:prepare-of-exact}:
\begin{align}\label{eq:balanced-OPT-necessary}
    \binom{\ell}{2} \ge m_{r,\ell} \sum_{k \in [r]} \binom{a_k}{2}.
\end{align}
Indeed, let $m=m_{r,\ell}$. 
If~\eqref{eq:balanced-OPT-necessary} fails, consider the perturbation (for a sufficiently small constant $\delta > 0$) 
\begin{align*}
    \bm{x}_\delta
    \coloneqq \big(\tfrac{1+\delta}{m},\underbrace{\tfrac{1}{m},\ldots,\tfrac{1}{m}}_{m-2},\tfrac{1-\delta}{m},0,\ldots \big).
\end{align*}
By~\eqref{equ:pF-x0-zero}, we have
\begin{align*}
    p_F(\bm{x}_\delta) 
    = \frac{\kappa_F(m-2)_{r-2}}{m^{\ell}} 
    \bigl((m-r)(m-r-1)+(m-r)S(\delta)+ T(\delta)\bigr),
\end{align*}
where
\begin{align*}
    S(\delta) 
    \coloneqq \sum_{i\in[r]} \bigl((1+\delta)^{a_i} + (1-\delta)^{a_i}\bigr) 
    \quad \text{and} \quad 
    T(\delta) 
    \coloneqq \sum_{\{i,j\}\subseteq[r]} (1+\delta)^{a_i}(1-\delta)^{a_j}.
\end{align*}
Writing $q(\delta) \coloneqq (m-r)S(\delta)+ T(\delta)$, a direct calculation yields
\begin{align*}
    q'(0) = 0
    \quad\text{and}\quad 
    q''(0) = 2\Bigl(m\sum_{i=1}^r \tbinom{a_i}{2} - \tbinom{\ell}{2} \Bigr) > 0. 
\end{align*}
It follows that $\bm m$ cannot be a maximizer, as claimed.

It should be pointed out that~\eqref{eq:balanced-OPT-necessary} is not a sufficient condition for $\OPT(F)=\{\bm{m}_{r,\ell}\}$ to hold (even when replaced by a strict inequality).
In forthcoming work~\cite{GLMZ26}, we will determine $i(F)$ and establish the perfect stability for all complete $3$-partite graphs $F$. 
In particular, our result will show that $F = K_{12,7,7}$ satisfies~\eqref{eq:balanced-OPT-necessary} with strict inequality. However, the unique maximizer in $\OPT(F)$ is  $\left(\alpha, \frac{1-\alpha}{2}, \frac{1-\alpha}{2}, 0, \ldots \right)$, where $\alpha \approx 0.396884$ is the largest real root of 
\begin{align*}
    130x^5 + 25x^4 - 90x^3 + 80x^2 - 40x + 7 = 0. 
\end{align*}

On the other hand, the sufficient condition~\eqref{equ:def-almost-balance} is not necessary in general. For example, $F = K_{4,8,8}$ is not almost balanced (since $\binom{8-4}{2} = 6 > 4$), yet our result will show that the unique maximizer for $F$ is $(1/3, 1/3, 1/3, 0, \ldots)$. 

Thus, the answer to Problem~\ref{Prob:asymptotic-equal}, in terms of the algebraic relations among $a_1, \ldots, a_r$, lies strictly between the hypersurfaces defined by~\eqref{equ:def-almost-balance} and~\eqref{eq:balanced-OPT-necessary}, and appears to be quite complicated.

%%%%%%%%%%%%%%%%%%%%%%%%%%%%%%%%%%%%%%%%%%%
\section*{Acknowledgments}
X.L. was supported by the Excellent Young Talents Program (Overseas) of the National Natural Science Foundation of China. 
J.M. was supported by National Key Research and Development Program of China 2023YFA1010201 and National Natural Science Foundation of China grant 12125106.
T.Z. was supported by Innovation Program for Quantum Science and Technology 2021ZD0302902.
X.L. would like to thank Oleg Pikhurko for clarifying the necessity of considering the case $x_0 > 0$ in the proof of Theorem~\ref{THM:almost-balanced-asymptotic}.
%%%%%%%%%%%%%%%%%%%%%%%%%%%%%%%%%%%%%%%%%%%%%%%%%%
\bibliographystyle{abbrv}
\bibliography{Inducibility}
%%%%%%%%%%%%%%%%%%%%%%%%%%%%%%%%%%%%%%%%%%%%%%%%%

%%%%%%%%%%%%%%%%%%%%%%%%%%%%%%%%%%%%%%%%%%%%%%%%%
\end{document}